\newtheorem{theorem}{Theorem}[section]
\newtheorem{cor}[theorem]{Corollary}
\newtheorem{lemma}[theorem]{Lemma}
\newtheorem{theo}[theorem]{Theorem}
\newtheorem{lem}[theorem]{Lemma}
\newtheorem{pro}[theorem]{Proposition}
\newtheorem{rem}[theorem]{Remark}
\newtheorem{exa}[theorem]{Example}
\newtheorem{que}[theorem]{Question}
\newtheorem{Definition}[theorem]{Definition}
\newtheorem*{Definition*}{Definition}
\numberwithin{equation}{section}
\def\qed{\hfill \ifhmode\unskip\nobreak\fi\quad\ifmmode\Box\else$\Box$\fi\\ }
\begin{document}
\title[$S^1$-action on oriented manifold with discrete fixed set]{Circle actions on oriented manifolds with discrete fixed point sets and classification in dimension 4}
\author{Donghoon Jang}
\thanks{Donghoon Jang is supported by Basic Science Research Program through the National Research Foundation of Korea(NRF) funded by the Ministry of Education(2018R1D1A1B07049511).}
\address{Department of Mathematics, Pusan National University, Pusan, Korea}
\email{donghoonjang@pusan.ac.kr}
\begin{abstract}
In this paper, we study a circle action on a compact oriented manifold with a discrete fixed point set. The fixed point data consists of the weights of the $S^1$-representations at the fixed points. We prove various results and properties of the action, in terms of the fixed point data. We show that the manifold can be described by a multigraph associated to it.

Specializing into the case of dimension 4, we classify the fixed point data. Moreover, we prove that there exist oriented $S^1$-manifolds with these fixed point data. Finally, we show that a certain multigraph behaves like a manifold.
\end{abstract}
\maketitle
\tableofcontents
\section{Introduction}

In this paper, we study circle actions on oriented manifolds with discrete fixed point sets. Let the circle act on a compact oriented manifold $M$ with a discrete fixed point set. At each fixed point, there are non-zero integers, called \emph{weights} (also called \emph{rotation numbers}). In this paper, we prove properties of the weights at the fixed points and derive results on the manifold. One of which we show is that we can associate a multigraph to $M$ and the manifold can be described by the multigraph. Finally, we specialize into the case of dimension 4. In dimension 4, we classify the weights at the fixed points and prove the existence part. Moreover, we give a necessary and sufficient condition for a multigraph to be realized as a multigraph associated to a 4-dimensional oriented $S^1$-manifold.

Consider a circle action on a compact oriented manifold. Assume that the fixed point set is non-empty and finite. For the classification of such an action, one may want to begin either with small numbers of fixed points, or with low dimensions. Note that having an isolated fixed point implies that the dimension of the manifold is even.

First, let us begin with small numbers of fixed points. If there is one fixed point, then the manifold must be a point. On the other hand, if there are two fixed points, then any even dimension is possible, as there is an example of a rotation of an even dimensional sphere with two fixed points. From the example, on any even dimension greater than two, we can have any even number of fixed points, since we can perform equivariant sum of rotations of even dimensional spheres. This makes a big difference with $S^1$-actions on other types of manifolds; for instance, an almost complex (and hence complex or symplectic) manifold $M$ equipped with an $S^1$-action\footnote{Throughout the paper, if the circle acts on an almost complex, complex, or symplectic manifold, we assume that the action preserves the almost complex, complex, or symplectic structure, respectively.} having two fixed points must have either $\dim M=2$ or $\dim M=6$; for the classification results for other types of manifolds, see \cite{Jan2}, \cite{Kos1}, and \cite{PT}.

The situation is quite different when the number of fixed points is odd. If there is an odd number of fixed points, then the dimension of the manifold must be a multiple of four; see Corollary \ref{c27}. Let us specialize into the case of three fixed points. Then the complex, quaternionic, and octonionic (Cayley) projective spaces ($\mathbb{CP}^2$, $\mathbb{HP}^2$, and $\mathbb{OP}^2$) of dimension 2 admit circle actions with three fixed points, which have real dimensions 4, 8, and 16, respectively. On the other hand, to the author's knowledge, it is not known if in dimensions other than 4, 8, and 16, there exists a manifold with three fixed points. Similar to the case of two fixed points, if we assume an almost complex structure on a manifold, three fixed points can only happen in dimension 4 \cite{Jan2}. Note that among the spaces above, only $\mathbb{CP}^2$ admits an almost complex structure (and complex or symplectic structures). For the classification results for other types of manifolds, see \cite{Jan1}.

Second, let us begin with low dimensions. In dimension two, the classification is rather trivial. Among compact oriented surfaces, only the 2-sphere $S^2$ and the 2-torus $\mathbb{T}^2$ admit non-trivial circle actions. Any circle action on $S^2$ has two fixed points and any circle action on $\mathbb{T}^2$ is fixed point free; see Lemma \ref{l212}.

We discuss classification results in dimension four. Before we discuss our main result, let us discuss results for other types of manifolds, or related results. The classification of a holomorphic vector field on a complex surface is made by Carrell, Howard, and Kosniowski \cite{CHK}. For a 4-dimensional Hamiltonian $S^1$-space, subsequent to the work by Ahara and Hattori \cite{AH} and Audin \cite{Au}, Karshon classifies such a space up to equivariant symplectomorphism, in terms of a multigraph associated to $M$ \cite{Ka}. Note that in Section \ref{s4}, we shall associate a multigraph to an oriented manifold $M$ and our notion of multigraphs generalizes the multigraphs for 4-dimensional Hamiltonian $S^1$-spaces. A multigraph determines the weights at fixed points. In dimension 4, for a complex manifold or for a symplectic manifold, the weights at the fixed points determine the manifold uniquely. When the fixed point set are discrete, our main result generalizes the classification of weights at fixed points for complex manifolds and symplectic manifolds to oriented manifolds. However, for a manifold to be oriented is a very weak condition, and therefore uniqueness fails to hold for oriented manifolds, since we can perform equivariant sum of a manifold with another manifold that is fixed point free.

Somewhat related results are the classifications of a circle action on a 4-manifold, with different perspectives. For instance, a circle action on a homotopy 4-sphere \cite{MY1}, \cite{MY2}, \cite{F1}, \cite{Pa} or on a simply connected 4-manifold \cite{F2}, \cite{Y} is considered. In addition, Fintushel classifies a 4-dimensional oriented $S^1$-manifold in terms of orbit data \cite{F3}.

Third, there is another point of view that one considers for a circle action on an oriented manifold with a discrete fixed point set. One of them is the Petrie's conjecture, which asserts that if a homotopy $\mathbb{CP}^n$ admits a non-trivial $S^1$-action, then its total Pontryagin class is the same as that of $\mathbb{CP}^n$ \cite{P}. In other words, the existence of a non-trivial $S^1$-action is enough to determine the characteristic class of such a manifold. The Petrie's conjecture is proved to hold in dimension up to 8 \cite{D}, \cite{Ja}.

To state our classification result, we introduce a terminology. Let the circle act on a $2n$-dimensional compact oriented manifold $M$ with a discrete fixed point set. Let $p$ be a fixed point. Then the tangent space at $p$ decomposes into $n$ two-dimensional irreducible $S^1$-equivariant real vector spaces
\begin{center}
$T_pM=\bigoplus_{i=1}^n L_i$.
\end{center}
Each $L_i$ is isomorphic to a one-dimensional $S^1$-equivariant complex space on which the action is given as multiplication by $g^{w_p^i}$, where $g\in S^1$ and $w_p^i$ is a non-zero integer. The $w_p^i$ are called \textbf{weights} at $p$. Though the sign of each weight is not well-defined, the sign of the product of the weights at $p$ is well-defined. We orient each $L_i$ so that every weight is positive. Let $\epsilon(p)=+1$ if the orientation given on $\bigoplus_{i=1}^n L_i$ this way agrees on the orientation on $T_pM$ and $\epsilon(p)=-1$ otherwise. Let us call it the \textbf{sign} of $p$. Denote the fixed point data at $p$ by $\Sigma_p=\{\epsilon(p), w_p^1, \cdots, w_p^n\}$. By the fixed point data $\Sigma_M$ of $M$, we mean a collection $\displaystyle \cup_{p \in M^{S^1}} \Sigma_p$ of the fixed point data at each fixed point $p$. To avoid possible confusion with weights, when we write the sign at $p$ inside $\Sigma_p$, we shall only write the sign of $\epsilon(p)$ and omit 1.

We give an example. Let the circle act on $S^{2n}$ by 
\begin{center}$g \cdot (z_1,\cdots,z_n,x)=(g^{a_1} z_1,\cdots, g^{a_n} z_n, x)$, \end{center}
for any $g \in S^1 \subset \mathbb{C}$, where $z_i$ are complex numbers and $x$ is a real number such that $\sum_{i=1}^n |z_i|^2+x^2=1$, and $a_i$ are positive integers for $1 \leq i \leq n$. The action has two fixed points, $p=(0,\cdots,0,1)$ and $q=(0,\cdots,0,-1)$. Near $p$, the action is described as $g \cdot (z_1,\cdots,z_n)=(g^{a_1} z_1,\cdots, g^{a_n} z_n)$. Therefore, the weights at $p$ are $\{a_1,\cdots,a_n\}$. Similarly, the weights at $q$ are $\{a_1,\cdots,a_n\}$. On the other hand, we have that $\epsilon(p)=-\epsilon(q)$; see Theorem \ref{t29}. The fixed point data of the circle action on $S^{2n}$ is therefore $\{+,a_1,\cdots,a_n\} \cup \{-,a_1,\cdots,a_n\}$.

With the notion of weights, consider a circle action on a 4-dimensional compact oriented manifold $M$ and assume that the fixed point set is discrete. As we have seen, the classification of the fixed point data for oriented manifolds is in general harder than complex manifolds or symplectic manifolds. To the author's knowledge, the fixed point data of an $S^1$-action on an oriented 4-manifold is known only if the number of fixed points is at most three; see \cite{L2} for the case of three fixed points. In this paper, we completely determine the fixed point data of $M$ with an arbitrary number of fixed points. Note that given a circle action on a manifold, we can always make the action effective by quotienting out by the subgroup $\mathbb{Z}_k$ that acts trivially. This amounts to dividing all the weights by $k$. We prove that for a circle action on a 4-dimensional oriented manifold with a discrete fixed point set, the fixed point data of the manifold can be achieved by simple combinatorics. A combinatorial format of the main result can be stated as follows.

\begin{theorem} \label{t11} Let the circle act effectively on a 4-dimensional compact oriented manifold $M$ with a discrete fixed point set. Then the fixed point data of $M$ can be achieved in the following way: begin with the empty set, and apply a combination of the following steps.
\begin{enumerate}
\item Add $\{+,a,b\}$ and $\{-,a,b\}$, where $a$ and $b$ are relatively prime positive integers.
\item Replace $\{+,c,d\}$ by $\{+,c,c+d\}$ and $\{+,d,c+d\}$.
\item Replace $\{-,e,f\}$ by $\{-,e,e+f\}$ and $\{-,f,e+f\}$.
\end{enumerate}\end{theorem}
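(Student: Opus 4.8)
The plan is to argue by induction on the number of fixed points, showing that the fixed point data of $M$ always admits an \emph{inverse} move that returns the data of a smaller oriented $S^1$-manifold. The organizing tool is the multigraph of Section~\ref{s4}: the vertices carry the data $\Sigma_p$, and the edges are the isotropy $2$-spheres, that is, the $2$-dimensional connected components $F$ of $M^{\mathbb{Z}_k}$ with $k\ge 2$. By Lemma~\ref{l212}, each such $F$ is an embedded $2$-sphere on which $S^1/\mathbb{Z}_k$ acts nontrivially, hence it meets $M^{S^1}$ in exactly two points $p$ and $q$.

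The first step is a local analysis along an edge $F$ joining $p$ and $q$. Writing the weights at $p$ as $\{w_1,w_2\}$ with $w_1$ the weight of the direction tangent to $F$, the fact that $\mathbb{Z}_k$ acts trivially on $TF$ forces $k\mid w_1$, while the normal weight $w_2$ is not divisible by $k$. Comparing the two poles of the rotating sphere $F$ shows that the two tangent weights agree in absolute value, and I would track the induced orientations to obtain the key dichotomy: if $\epsilon(p)=-\epsilon(q)$ then the normal weights are congruent modulo $k$, while if $\epsilon(p)=\epsilon(q)$ their sum is divisible by $k$. This is exactly the congruence pattern visible in the two model situations, namely the rotation of $S^4$ (opposite signs, equal normal weights) and a blown-up fixed point (equal signs, normal weights summing to the tangent weight).

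Next I would run the reduction. Let $N$ be the largest weight occurring in $\Sigma_M$. If $N\ge 2$, choose a fixed point $p$ with a weight equal to $N$; since the action is effective and $M$ is connected, the other weight $c$ at $p$ satisfies $c<N$, so the $N$-direction is tangent to an isotropy sphere $F$ (a component of $M^{\mathbb{Z}_N}$) whose tangent weight is exactly $N$. Let $q$ be the other pole, with normal weight $d<N$. Because $N$ is maximal we have $0<c,d<N$, and the dichotomy above leaves only two possibilities: either $\epsilon(p)=\epsilon(q)$ with $c+d=N$, giving $\Sigma_p=\{\epsilon,c,c+d\}$ and $\Sigma_q=\{\epsilon,d,c+d\}$ (the output of move~(2) or~(3)); or $\epsilon(p)=-\epsilon(q)$ with $c=d$, where effectiveness forces $\gcd(c,N)=1$, giving a pair $\{+,c,N\},\{-,c,N\}$ (the output of move~(1)). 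In either case the relevant inverse move applies. The remaining case $N=1$ is the semifree one: every $\Sigma_p$ equals $\{\pm,1,1\}$ and there are no isotropy spheres, so I would invoke the global sign balance coming from Theorem~\ref{t29} (equivalently, the signature) to see that $\{+,1,1\}$ and $\{-,1,1\}$ occur equally often, and strip them off in pairs by inverse move~(1).

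The main obstacle is closing the induction: after identifying a reducible configuration I must produce an honest smaller compact oriented $S^1$-manifold $M'$ with discrete fixed set whose data is the reduced multiset, so that the inductive hypothesis applies. Geometrically this is an equivariant blow-down (the inverse of an equivariant sphere-sum) performed along the isotropy sphere $F$, and the delicate points are verifying that this surgery yields a smooth oriented manifold, that it removes exactly the two vertices $p,q$ and replaces them by the single vertex $\{\epsilon,c,d\}$, and that orientations and the sign $\epsilon$ transform as the combinatorial move predicts. The sign bookkeeping in the local dichotomy, together with this realizability of the blow-down, is where essentially all of the work lies; the termination of the induction is then immediate, since each inverse move strictly decreases the number of fixed points.
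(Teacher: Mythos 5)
Your local analysis and your reduction-by-largest-weight skeleton match the paper's: your dichotomy is exactly Lemmas \ref{l33} and \ref{l34}, and your semi-free endgame via Theorem \ref{t29} is the paper's as well. The genuine gap is precisely where you yourself place ``essentially all of the work'': closing the induction by producing an honest smaller oriented $S^1$-manifold $M'$ via an ``equivariant blow-down / inverse sphere-sum along $F$.'' The paper never performs such a surgery, and for good reason. First, in your same-sign case, blowing down $F$ requires knowing that $F$ has an $S^1$-equivariant tubular neighborhood isomorphic to a neighborhood of the exceptional divisor of the equivariant blow-up of $\mathbb{C}^2$ with weights $c,d$; this needs a computation of the equivariant normal bundle of $F$ (fiber weights and Euler number) together with an equivariant tubular-neighborhood argument, none of which you supply. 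Second, and more seriously, in your opposite-sign case ($\epsilon(p)=-\epsilon(q)$, $c=d$) the inverse of move~(1) has no direct geometric realization in general: the $c$-weight directions at $p$ and $q$ need not pair with each other. They may instead lie on isotropy spheres joining $p$ and $q$ to two \emph{other} fixed points (the paper's Case (2)(b), Figure \ref{fig2}); then $\{p,q\}$ is not contained in any invariant $S^4$-summand you could split off, and a blow-down along $F$ would remove only the sphere $F$ while leaving $p$ and $q$ as fixed points. Your proposal does not register this case at all, and it is exactly the hard configuration.

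The paper's resolution is to avoid any surgery on $M$: the induction runs at the level of the labelled multigraph. In Case (2)(b) the paper \emph{redraws} the two edges carrying the repeated weight (so that one becomes a second edge joining $p_1$ and $p_2$, and a new edge of the same label joins $p_3$ to $p_4$), and then verifies that the congruence constraint of Lemma \ref{l33} (the equal modulo property) still holds for the redrawn multigraph; this is what licenses continuing the reduction, since for the largest label your dichotomy is a purely combinatorial consequence of that property. Manifolds are constructed only bottom-up, at the very end: the all-weights-one data is realized by equivariant sums of rotations of $S^4$ (Lemma \ref{l23}), and each reduction step is undone by an honest blow-up (Lemma \ref{l51}) or an equivariant sum with $S^4$ --- operations which, unlike their inverses, are always available. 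To repair your argument you would either have to carry out the blow-down construction \emph{and} treat the Case (2)(b) configuration separately (essentially reproving the paper's edge-redrawing step), or reformulate your induction so that its hypothesis is the combinatorial congruence property of the data rather than the existence of a manifold realizing it.
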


For instance, if there are 2 fixed points, the only possibility is that only Step (1) occurs once, and hence the fixed point data is $\{+,a,b\}$ and $\{-,a,b\}$ for some positive integers $a$ and $b$. If there are 3 fixed points, Step (1) needs to occur once, and exactly one of Step (2) and Step (3) must occur once; if Step (2) occurs the fixed point data is $\{+,a,a+b\} \cup \{+,b,a+b\} \cup \{-,a,b\}$ and if Step (3) occurs the fixed point data is $\{+,a,b\} \cup \{-,a,a+b\} \cup \{-,b,a+b\}$ for some positive integers $a$ and $b$; see Theorem \ref{t71}.

Moreover, we prove the existence part. Given a fixed point data achieved by the combinatorics as in Theorem \ref{t11}, we construct a manifold equipped with a circle action having a discrete fixed point set, with the desired fixed point data. In our construction, Step (1) in Theorem \ref{t11} corresponds to adding $S^4$ equipped with a rotation, and Step (2) and Step (3) each correspond to blowing up a fixed point. A geometric format of the main result, that proves the existence of such a manifold $M$ in Theorem \ref{t11}, can be stated as follows.

\begin{theorem} \label{t12} Let the circle act on a 4-dimensional compact connected oriented manifold $M$ with a discrete fixed point set. Then there exists a 4-dimensional compact connected oriented manifold $M'$ that is an equivariant sum of blow-ups of rotations on $S^4$'s, such that $M$ and $M'$ have the same fixed point data. 

More precisely, $M'$ is an equivariant sum along free orbits of blow-ups of rotations of $S^4$'s, where $S^1$ acts on each $S^4$ by $g \cdot (z_1,z_2,x)=(g^az_1,g^bz_2,x)$ for any $g \in S^1 \subset \mathbb{C}$, for some positive integers $a$ and $b$, and blow up is in the sense of Lemma \ref{l51}. \end{theorem}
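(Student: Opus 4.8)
The plan is to invoke the combinatorial classification of Theorem~\ref{t11} and then realize each of its three steps by an explicit equivariant geometric operation, building the desired manifold $M'$ by induction on the number of combinatorial steps used to produce $\Sigma_M$. First I would apply Theorem~\ref{t11} to $M$ to obtain a finite sequence of Steps (1), (2), (3) that produces $\Sigma_M$ starting from the empty set. I would then induct on the number $N$ of steps in this sequence, carrying as the inductive hypothesis that after the first $k$ steps one has constructed a $4$-dimensional compact connected oriented $S^1$-manifold whose fixed point data equals the partial data produced by those $k$ steps, and which is an equivariant sum of blow-ups of rotations of $S^4$'s. Note that the first step is necessarily Step (1), since Steps (2) and (3) require a pre-existing triple to replace; this supplies the connected base case.

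For Step (1), I would take the rotation on $S^4$ given by $g\cdot(z_1,z_2,x)=(g^az_1,g^bz_2,x)$ with $\gcd(a,b)=1$, so that the action is effective. By the computation in the Introduction, together with Theorem~\ref{t29} to fix the relative signs of the two fixed points, this $S^4$ has exactly two fixed points with fixed point data $\{+,a,b\}$ and $\{-,a,b\}$, matching Step (1). To adjoin this data to the manifold produced so far without disturbing the rest, I would form the equivariant sum along a free orbit, which exists because the action is effective and non-trivial. Since a free orbit contains no fixed point, this operation neither creates nor destroys fixed points, so the fixed point data of the sum is the union of the two fixed point data; and gluing the new $S^4$ to the existing connected piece keeps the result connected.

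For Steps (2) and (3), I would realize each replacement by blowing up the relevant fixed point in the sense of Lemma~\ref{l51}. Blowing up a fixed point $p$ with fixed point data $\{+,c,d\}$ should remove $p$ and produce two new fixed points; the content is that their weights are $\{c,c+d\}$ and $\{d,c+d\}$ and that both retain the sign $+$, which is exactly Step (2), while the same computation with reversed orientation gives Step (3) from $\{-,e,f\}$. I would also verify that blowing up a fixed point of an equivariant sum of blow-ups of $S^4$ rotations again yields a manifold of this form, so the inductive hypothesis is preserved.

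This weight-and-sign computation for the blow-up is the heart of the argument and the step I expect to be the main obstacle: one must check not only the two weights at each exceptional fixed point but also that the equivariant blow-up of Lemma~\ref{l51} is compatible with the orientation, so that the signs come out as claimed (both $+$ for Step (2), both $-$ for Step (3)) rather than being flipped or mixed. Granting the three realizations, the inductive step is immediate, since each combinatorial operation is matched by the corresponding geometric operation on the manifold; after all $N$ steps the resulting manifold $M'$ is a connected equivariant sum of blow-ups of rotations of $S^4$'s with $\Sigma_{M'}=\Sigma_M$, as required.
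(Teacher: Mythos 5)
Your proposal is correct, but it is organized quite differently from the paper, and the difference is worth spelling out. The paper does not have Theorem \ref{t11} in hand when it proves Theorem \ref{t12}: the two statements are proved \emph{simultaneously} in Section \ref{s6} by a downward reduction. There one takes a largest-weight edge (weight $w>1$) of the multigraph from Proposition \ref{p42}, applies Lemma \ref{l34} to its two endpoints, and concludes that $\Sigma_M$ arises from a strictly smaller data set either by the blow-up move (Case (1)) or by adjoining the data of a rotation of $S^4$ (Case (2)); the delicate point is Case (2)(b), where two edges must first be re-paired and the congruences of Lemma \ref{l33} re-verified before an $S^4$ summand can be split off. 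Iterating until all weights equal $1$ and then unwinding yields the combinatorial sequence of Theorem \ref{t11} and the model manifold $M'$ of Theorem \ref{t12} at the same time. You instead take Theorem \ref{t11} as a black box and run only the unwinding direction: an upward induction on the number of combinatorial steps, realizing Step (1) by an equivariant sum with a rotation of $S^4$ (Lemma \ref{l23}, with orientations chosen so the two data sets unite with the stated signs) and Steps (2)/(3) by the blow-up of Lemma \ref{l51}. That realization half coincides exactly with the paper's; in particular the ``main obstacle'' you flag (the weights and signs produced by the blow-up) is precisely the content of Lemma \ref{l51}, so citing it suffices.

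What your organization buys is modularity and a short argument; what it costs is that all the genuine difficulty of Section \ref{s6} is concealed inside the citation of Theorem \ref{t11}, and you must say why this is not circular, since in the paper Theorems \ref{t11} and \ref{t12} share a single proof. The citation is legitimate because the downward, combinatorial half of that proof does not require the existence of any model manifold --- the reduction can be carried out purely on the level of multigraphs satisfying the modular and minimality constraints, which is exactly what Theorem \ref{t82} isolates --- but a complete write-up should make this point explicit. Two minor repairs: Theorem \ref{t11} assumes the action is \emph{effective} while Theorem \ref{t12} does not, so you should first pass to the effective quotient (or note, as the paper implicitly does, that one may assume effectiveness); and your observation that blow-ups (local operations near fixed points) commute with equivariant sums (performed along free orbits, away from fixed points) is genuinely needed so that the interleaved construction ends up in the precise form ``equivariant sum of blow-ups of rotations on $S^4$'s'' asserted by the theorem, so keep that verification in the final proof.
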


We adapt the notion of blow up because we shall identify a neighborhood of a fixed point in $M$ with a neighborhood of $0$ in $\mathbb{C}^2$, where we have a complex structure to be able to blow up in the usual sense. We shall blow up equivariantly and only do so at a fixed point. Suppose that we blow up a fixed point $p$ whose weights are $\{\pm,a,b\}$ for some positive integers $a$ and $b$. On a blown up manifold $\widetilde{M}$, there is a naturally extended circle action. Moreover, instead of the fixed point $p$, there are two fixed points $p_1$ and $p_2$, whose weights are $\{\pm,a,a+b\}$ and $\{\pm,b,a+b\}$, respectively. For details, see Section \ref{s5}.

As a corollary of Theorem \ref{t11} and Theorem \ref{t12}, we classify the signature of a 4-dimensional compact oriented manifold equipped with a circle action having a discrete fixed point set. We give a proof in Section \ref{s7}.

\begin{cor} \label{c13} Let the circle act on a 4-dimensional compact oriented manifold $M$ with $k$ fixed points. Then $k \geq 2$, and the signature of $M$ satisfies $2-k \leq \textrm{sign}(M) \leq k-2$ and $\textrm{sign}(M) \equiv k \mod 2$. Moreover, given any pair $(j,k)$ of integers $j$ and $k$ such that $k \geq 2$, $2-k \leq j \leq k-2$, and $j \equiv k \mod 2$, there exists a 4-dimensional compact connected oriented manifold $M$ equipped with a circle action having $k$ fixed points, whose signature satisfies $\textrm{sign}(M)=j$. \end{cor}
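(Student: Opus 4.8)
The plan is to derive Corollary \ref{c13} as a bookkeeping consequence of the combinatorial description in Theorem \ref{t11} together with the realization statement of Theorem \ref{t12}. The signature of a $4$-dimensional oriented $S^1$-manifold with discrete fixed points is computed by the Atiyah--Singer $G$-signature theorem, and for an isolated fixed point $p$ with weights $\{\epsilon(p),a,b\}$ the local contribution is precisely $\epsilon(p)$; hence
\begin{equation}
\mathrm{sign}(M)=\sum_{p\in M^{S^1}}\epsilon(p).
\end{equation}
So the entire problem reduces to controlling the quantity $\sum_p \epsilon(p)$ and the number $k=|M^{S^1}|$ along the moves of Theorem \ref{t11}.

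First I would establish $k\ge 2$ and the two inequalities by induction on the number of moves used to build the fixed point data. Each application of Step (1) adds one point of each sign, so it increases $k$ by $2$ and leaves $\sum_p\epsilon(p)$ unchanged. Each application of Step (2) replaces one $+$ point by two $+$ points, increasing $k$ by $1$ and $\sum_p\epsilon(p)$ by $+1$; symmetrically Step (3) increases $k$ by $1$ and decreases $\sum_p\epsilon(p)$ by $1$. Since a nonempty fixed point set requires at least one Step (1) (the only move that introduces points), we get $k\ge 2$ immediately, and the base case after one Step (1) gives $k=2$, $\mathrm{sign}=0$, consistent with $2-k\le 0\le k-2$. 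For the inductive step, if the current data satisfies $|\mathrm{sign}(M)|\le k-2$ then each of Steps (1), (2), (3) preserves this: under Step (1), $k\mapsto k+2$ and $\mathrm{sign}$ is fixed, so the bound only slackens; under Step (2) or (3), both $k$ and $|\mathrm{sign}|$ can increase by at most $1$, so $|\mathrm{sign}|\le k-2$ is maintained. The congruence $\mathrm{sign}(M)\equiv k \bmod 2$ follows the same way: Step (1) changes $k$ by an even number and $\mathrm{sign}$ by $0$, while Steps (2) and (3) change each of $k$ and $\mathrm{sign}$ by exactly $1$, so the parity of $k-\mathrm{sign}(M)$ is invariant and equals its value $2-0=2\equiv 0$ after the first Step (1).

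For the existence (realization) half, given a target pair $(j,k)$ with $k\ge 2$, $2-k\le j\le k-2$, and $j\equiv k\bmod 2$, I would exhibit an explicit sequence of moves producing fixed point data with $\sum_p\epsilon(p)=j$ and exactly $k$ points, and then invoke Theorem \ref{t12} to realize it by an actual connected oriented $S^1$-manifold (with the signature again read off by the $G$-signature formula). A clean recipe: start with a single Step (1), giving $(\mathrm{sign},k)=(0,2)$; then apply Step (2) a total of $s$ times and Step (3) a total of $t$ times, which yields $\mathrm{sign}=s-t$ and $k=2+s+t$. Solving $s-t=j$ and $2+s+t=k$ gives $s=\tfrac{(k-2)+j}{2}$ and $t=\tfrac{(k-2)-j}{2}$; the hypotheses $j\equiv k\bmod 2$ and $|j|\le k-2$ guarantee that $s$ and $t$ are nonnegative integers, so the recipe is valid. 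One must check at each stage that a point of the required sign with coprime weights is available to which the blow-up move applies; choosing the initial $\{+,a,b\}$, $\{-,a,b\}$ with $a,b$ coprime and performing all $+$-blow-ups before running out of $+$ points (each Step (2) leaves two $+$ points, so $+$ points remain as long as $s\ge 1$, and likewise for $-$ points and $t$) makes this routine.

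The main obstacle I anticipate is not the combinatorics, which is elementary, but justifying cleanly the identity $\mathrm{sign}(M)=\sum_p\epsilon(p)$ and, in the existence direction, confirming that Theorem \ref{t12} produces a \emph{connected} manifold with exactly the prescribed fixed point data so that its signature is genuinely $j$. The connectivity is handled by taking equivariant sums along free orbits as in Theorem \ref{t12}, and the per-point signature contribution being $\epsilon(p)$ is exactly the content that makes the blow-up moves (which add an $S^2$ of self-intersection $\pm 1$) shift the signature by $\pm 1$; once these two facts are in hand, the statement follows by combining the inductive bounds above with the explicit $(s,t)$ construction.
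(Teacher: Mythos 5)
Your proposal is correct, and its first half is essentially the paper's own argument: both use the $t=0$ specialization of the Atiyah--Singer formula, $\mathrm{sign}(M)=\sum_{p\in M^{S^1}}\epsilon(p)$, together with bookkeeping on the moves of Theorem \ref{t11} (the paper phrases the bound $|\mathrm{sign}(M)|\le k-2$ by noting that Step (1) must occur at least once and no step ever destroys a fixed point of either sign, so both signs are always present; your move-by-move induction is an equivalent repackaging). Where you genuinely diverge is the existence half. The paper proceeds by induction on $k$ to first build, for every $k\ge 2$, a manifold with extremal signature $\pm(k-2)$ (repeated blow-ups via Lemma \ref{l51}), and then hits a general admissible pair $(j,k)$ by taking an equivariant connected sum (Lemma \ref{l23}) of the extremal manifold having $|j|+2$ fixed points with $\tfrac{k-|j|-2}{2}$ rotations of $S^4$. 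Your recipe instead solves $s=\tfrac{(k-2)+j}{2}$, $t=\tfrac{(k-2)-j}{2}$ and performs $s$ positive and $t$ negative blow-ups on a single rotation of $S^4$; this needs no equivariant sums at all and reaches the target in one pass, which is arguably cleaner. One citation should be repaired: Theorem \ref{t12}, as stated, converts an already existing $M$ into a sum of blow-ups of $S^4$'s with the same fixed point data; it is not literally the statement that every output of the combinatorial moves is realizable. The realization you need is supplied by Lemma \ref{l51} (each blow-up move is geometrically implemented, preserving compactness, connectedness and orientation) applied to the standard rotation $g\cdot(z_1,z_2,x)=(g^az_1,g^bz_2,x)$ on $S^4$ --- which is exactly what your explicit construction uses anyway, so the gap is one of attribution, not of substance. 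Your availability check (a $+$ point survives every Step (2), a $-$ point every Step (3)) and the parity/integrality check on $s,t$ are both sound.
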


The idea of the proof of Theorem \ref{t11} and Theorem \ref{t12} is as follows. Given a weight $w$, there exist two fixed points $p_1$ and $p_2$ that lie in the same connected component $Z$ of $M^{\mathbb{Z}_w}$, where $M^{\mathbb{Z}_w}$ denotes the set of points fixed by the $\mathbb{Z}_w$-action. Here $\mathbb{Z}_w$ acts on $M$ as a subgroup of $S^1$. We begin with the biggest weight. Suppose that $p_1$ and $p_2$ have the biggest weight. Then we prove that two cases occur. In one case, we show that the fixed point data of $M$ can be obtained from another manifold $M'$ by blowing up at a fixed point, and in the other case we show that the fixed point data of $M$ can be obtained from an equivariant sum of another manifold $M''$ with $S^4$. In the former case, the fixed point data at $p_1$ and $p_2$ is achieved by blowing up a fixed point in $M'$. In the latter case, a rotation of $S^4$ has fixed points whose fixed point data is the same as that of $p_1$ and $p_2$. Therefore, these manifolds $M'$ and $M''$ have fewer fixed points and have smaller largest weights. Now, on $M'$ or $M''$, pick the biggest weight. The classification problem now reduces to the existence of another manifold with fewer fixed points and with smaller largest weight. We continue this process to reduce the classification problem of the fixed point data to the existence of a semi-free $S^1$-action with a discrete fixed point set, that does exist.

Let us discuss what kinds of invariants can be determined from Theorem \ref{t11}. For this, suppose that there exists a 4-dimensional compact oriented manifold $M$ equipped with a circle action having a discrete fixed point set. Then we can perform equivariant sum of $M$ with another manifold $M'$ that is fixed point free. The resulting manifold is in general not (equivariantly) diffeomorphic to $M$. Therefore, Theorem \ref{t11} cannot determine whether two manifolds with the same fixed point data are (equivariantly) diffeomorphic to each other or not.

On the other hand, Theorem \ref{t11} determines some invariants of a given manifold $M$. Theorem \ref{t11} determines the signature, the Pontryagin number $\int_M p_1$, and the Euler characteristic of $M$. In particular, let $b_1$ and $b_2$ be the number of times Step (2) and Step (3) occur in Theorem \ref{t11}, respectively. Then the signature of $M$ is $b_1-b_2$. The signature of any circle action on a 4-dimensional almost complex manifold satisfies $|\textrm{sign}(M)| \leq |k-4|$ ($|\textrm{sign}(M)|=|k-4|$ if $M$ is complex or symplectic), where $k$ is the number of fixed points. We can see this in the following way: if $M$ is almost complex, then the sign of each weight at a fixed point is well-defined. For $0 \leq i \leq 2$, there exists at least one fixed point with $i$ negative weights. A fixed point with $i$ negative weights contributes as $(-1)^i$ to the signature of $M$. On the other hand, if $M$ is complex or symplectic, then there is a unique fixed point with no negative weight and there is a unique fixed point with 2 negative weights. However, Corollary \ref{c13} proves that there exists a 4-dimensional oriented manifold equipped with a circle action having $k$ fixed points, whose signature satisfies $|\textrm{sign}(M)|=k-2$.

A natural question from Theorem \ref{t11} is to ask if an analogous result holds in higher dimensions.
\begin{que} \label{q1} Let the circle act on a compact oriented manifold with a discrete fixed point set. Then can we classify the fixed point data? What is a possible fixed point data? What can we say about the manifold? \end{que}
More precisely, it is natural to ask if there exist minimal models (manifolds) and operations for classifying fixed point data.
\begin{que} \label{q2} Let the circle act on a compact oriented manifold $M$ with a discrete fixed point set. Then what are the minimal models (manifolds) and operations to construct a manifold $M'$ with the same fixed point data as $M$? \end{que}
As in dimension 4, spheres, equivariant sum, and blow-up operation are possible candidates for the problem.

This paper is organized as follows. For the description, let the circle act on a compact oriented manifold $M$ with a discrete fixed point set. In Section \ref{s2}, we recall background and prove properties of weights and derive results from the properties. In Section \ref{s3}, we prove properties that weight representations of $M$ at the fixed points satisfy, in terms of isotropy submanifolds. One of which, Lemma \ref{l34}, plays a crucial role in the proof of Theorem \ref{t11} and Theorem \ref{t12}. In Section \ref{s4}, we associate a multigraph to $M$. In Section \ref{s5}, we discuss the blow up operation that also plays a key role in the proof. In Section \ref{s6}, with all of these, we prove Theorem \ref{t11} and Theorem \ref{t12} together. In Section \ref{s7}, we classify a circle action with few fixed points and prove Corollary \ref{c13} as applications of Theorem \ref{t11}. In Section \ref{s8}, as another application, we show that some kind of multigraph behaves like a manifold.

The motivation of this paper is \cite{L2}, where Li classifies the weights at the fixed points of a circle action on a 4-dimensional compact orientable manifold with 3 fixed points. The author tried the case of 4 fixed points and found out that there is some pattern. Therefore the author developed a theory to deal with any number of fixed points (and any dimension), and was able to classify the case of any number of fixed points. After writing this paper, the author was informed that circle actions on 4-dimensional manifolds were classified a long time ago (see \cite{F1}, \cite{F2}, \cite{F3}, and \cite{Y}), and Theorem \ref{t11} of this paper can be extracted by some combinatorics of those papers; the weights at any fixed point are encoded in the Seifert invariants of the orbit space (see \cite{F3}). The author was also informed after writing this paper that the author's combinatorial technique (see the proof of the main theorems in Section \ref{s6}) was used by Pao \cite{Pa}, where Pao only considers homotopy 4-spheres. However, our proof is different from those papers (their main tool is orbit data), and those results and techniques are restricted in dimension 4. In this paper, we develop a theory that can be used to study in arbitrary dimension; see Sections \ref{s2}, \ref{s3}, \ref{s4}, and \ref{s5}. The author's combinatorial method follows from Lemma \ref{l34}, that we obtain as a corollary of Lemma \ref{l31} which is for arbitrary dimension. Similarly, the association of a multigraph to a manifold (Section \ref{s4}) can be used in any dimension.

Finally, the author would like to thank the anonymous referee for valuable comments and suggestions.

\section{Circle action on oriented manifold with discrete fixed point set} \label{s2}

Let the circle act on a compact oriented manifold with a discrete fixed point set. In this section, we recall background and prove properties that the weights at the fixed points satisfy and look at their bi-products.

For a compact oriented manifold $M$, one defines the signature operator on $M$. The Atiyah-Singer index theorem states that the topological index of the operator is equal to the analytical index of the operator. With the definitions of weights and the sign at fixed points in the Introduction, as an application to a compact oriented $S^1$-manifold with a discrete fixed point set, we have the following formula:

\begin{theo} \emph{[Atiyah-Singer index theorem]} \cite{AS} \label{t21} Let the circle act on a $2n$-dimensional compact oriented manifold $M$ with a discrete fixed point set. Then the signature of $M$ is
\begin{center}
$\displaystyle{\textrm{sign}(M) = \sum_{p \in M^{S^1}} \epsilon(p) \prod_{i=1}^{n} \frac{(1+t^{w_p^i})}{(1-t^{w_p^i})}}$
\end{center}
and is a constant, where $t$ is an indeterminate. \end{theo}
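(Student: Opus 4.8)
The plan is to obtain this identity as the $S^1$-equivariant refinement of the Hirzebruch signature theorem: deduce the fixed-point expression from the equivariant index theorem, and then argue separately that the resulting rational function in $t$ is constant.

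First I would recall the construction of the signature operator. On the $2n$-dimensional compact oriented manifold $M$ one has a natural involution $\tau$ on the space of harmonic forms, with $\pm1$-eigenspaces $H^+$ and $H^-$; by Hodge theory $\operatorname{sign}(M)=\dim H^+-\dim H^-$. The circle action commutes with $\tau$, so $H^{\pm}$ are finite-dimensional $S^1$-representations, and the equivariant signature
\begin{center}
$\operatorname{sign}(t,M)=\operatorname{tr}(t\mid H^+)-\operatorname{tr}(t\mid H^-)$
\end{center}
is the character of a virtual representation of $S^1$, hence a Laurent polynomial $\chi(t)=\sum_k a_k t^k$ with $\chi(1)=\operatorname{sign}(M)$. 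In particular $\chi$ is holomorphic on $\mathbb{C}\setminus\{0\}$.

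Next I would apply the Atiyah--Singer equivariant index theorem (the $G$-signature theorem) to $\operatorname{sign}(t,M)$. For every $t\in S^1$ whose fixed set coincides with $M^{S^1}$ --- which is all but finitely many $t$, the exceptions being certain roots of unity --- the fixed points are isolated and the Lefschetz-type formula expresses $\operatorname{sign}(t,M)$ as a sum of local contributions. At a fixed point $p$, choosing on each $L_i$ the complex structure for which the weight $w_p^i$ is positive makes $t$ act as $t^{w_p^i}$, and the local term for this complex orientation is $\prod_{i=1}^n\frac{1+t^{w_p^i}}{1-t^{w_p^i}}$; correcting from this orientation back to the given orientation of $M$ multiplies the term by $\epsilon(p)$. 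Summing over $p$ yields
\begin{center}
$\displaystyle \operatorname{sign}(t,M)=\sum_{p\in M^{S^1}}\epsilon(p)\prod_{i=1}^n\frac{1+t^{w_p^i}}{1-t^{w_p^i}}=:R(t)$.
\end{center}

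Finally I would upgrade this identity to the claimed constancy. The left-hand side $\chi(t)$ and the rational function $R(t)$ agree on the cofinite subset of $S^1$ described above, a set with an accumulation point, so they coincide as meromorphic functions on $\mathbb{C}\setminus\{0\}$. Since $\chi$ is holomorphic there, $R$ has no poles on $\mathbb{C}\setminus\{0\}$; in particular the poles at the roots of unity cancel. Because every weight is positive, each factor $\frac{1+t^{w}}{1-t^{w}}$ tends to $1$ as $t\to0$ and to $-1$ as $t\to\infty$, so $R$ has finite limits at both $0$ and $\infty$. A function holomorphic on $\mathbb{C}\setminus\{0\}$ with finite limits at $0$ and $\infty$ extends holomorphically over $\mathbb{P}^1$ and is therefore constant; its value is $\chi(1)=\operatorname{sign}(M)$. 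I expect the main obstacle to be the bookkeeping in the second step: pinning down the precise local contribution of the $G$-signature theorem with the correct sign, and checking that the orientation discrepancy measured by $\epsilon(p)$ is exactly what converts the complex-orientation contribution into a term for the given orientation of $M$. Granting the fixed-point formula, the constancy in the last step is a short complex-analytic argument whose only genuine input is that all weights can be taken positive, so that $R$ stays bounded at $0$ and $\infty$.
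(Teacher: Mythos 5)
The paper offers no proof of this theorem: it is quoted directly from Atiyah--Singer \cite{AS}, so your argument is measured against that source rather than against anything in the paper itself. Your proposal correctly reconstructs the standard derivation given there --- the equivariant signature $\operatorname{sign}(t,M)$ is a character of a virtual $S^1$-representation, hence a Laurent polynomial; the $G$-signature fixed-point formula at generic $t\in S^1$ produces the local terms $\epsilon(p)\prod_{i=1}^n\frac{1+t^{w_p^i}}{1-t^{w_p^i}}$; and agreement of the two on a set with an accumulation point plus finiteness of the limits of the rational function at $0$ and $\infty$ forces constancy --- so this is essentially the same approach as the cited reference.
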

In particular, taking $t=0$ in Theorem \ref{t21}, we have
\begin{equation} \label{eq:1}
\displaystyle{\textrm{sign}(M) = \sum_{p \in M^{S^1}} \epsilon(p).}
\end{equation}

One of the important operations to prove Theorem \ref{t11} is the equivariant sum that connects two $S^1$-manifolds equivariantly. Let us discuss on equivariant sum more precisely. For $i=1,2$, let $M_i$ be a $2n$-dimensional compact connected oriented manifold equipped with an effective circle action and with a discrete fixed point set. Then for each $i$, there exists a free orbit in $M_i$. For each $i$, by the slice theorem, there exists a tubular neighborhood $S^1 \times D^{2n-1}$ in $M_i$, containing the free orbit. Using the tubular neighborhoods, we can connect $M_1$ and $M_2$ equivariantly to construct a new manifold $M$, that is compact, connected, oriented, and is equipped with a circle action. Moreover, by reversing orientations of $M_i$ and gluing if necessary, $M$ has the fixed point data $\pm \Sigma_{M_1} \cup \pm \Sigma_{M_2}$, where $-\Sigma_{M_i}$ denotes the fixed point data of $M_i$ with the orientation reversed, i.e., if $\Sigma_{M_i}=\cup_{p \in M_i^{S^1}} \{\epsilon(p),w_p^1,\cdots,w_p^n\}$, then $-\Sigma_{M_i}=\cup_{p \in M_i^{S^1}} \{-\epsilon(p),w_p^1,\cdots,w_p^n\}$. For the complete proof, one may look at Lemma 2.2 and Proposition 2.4 of \cite{L2}.

\begin{lem} \cite{L2} \label{l23} Let the circle act effectively on a $2n$-dimensional compact connected oriented manifold $M_i$ with a discrete fixed point set, for $i=1,2$, where $n>1$. Then we can construct a $2n$-dimensional compact connected oriented manifold $M$ equipped with a circle action, whose fixed point data is $\pm \Sigma_{M_1} \cup \pm \Sigma_{M_2}$. \end{lem}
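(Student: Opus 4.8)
The plan is to realize $M$ as an equivariant connected sum of $M_1$ and $M_2$ performed along free orbits, so that no fixed points are removed and only the global orientation is affected. The argument breaks into four parts: producing free orbits, modelling their invariant neighborhoods, gluing equivariantly, and tracking the orientation.

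First I would establish the existence of a free orbit in each $M_i$. Since the action is effective with isolated fixed points, every isotropy subgroup is either all of $S^1$ (at the finitely many fixed points) or a finite cyclic group; for each $k>1$ the set $M_i^{\mathbb{Z}_k}$ is a proper closed submanifold of positive codimension by effectiveness, so the principal isotropy must be trivial (otherwise it would fix an open dense set and hence, by continuity, all of $M_i$). By the principal orbit theorem the union of free orbits is then open and dense in the connected manifold $M_i$, so a free orbit $\mathcal{O}_i \cong S^1$ exists. For a free orbit the slice representation is the trivial representation of the trivial group, so the slice theorem gives an invariant tubular neighborhood equivariantly diffeomorphic to $S^1 \times D^{2n-1}$, on which $g \in S^1$ acts by $g \cdot (h,v) = (gh,v)$. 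Here the hypothesis $n>1$ guarantees that the boundary $S^1 \times S^{2n-2}$ is connected, which is precisely what makes the subsequent gluing yield a connected manifold.

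Next I would delete the open tube from each $M_i$, obtaining compact manifolds $N_i$ with boundary $S^1 \times S^{2n-2}$, and glue $N_1$ to $N_2$ along an equivariant diffeomorphism $\phi$ of $S^1 \times S^{2n-2}$. Because $S^1$ acts only on the first factor, any map of the form $\phi(h,s)=(h,\psi(s))$ with $\psi$ a diffeomorphism of $S^{2n-2}$ is automatically equivariant, so I would take $\psi$ to be an orientation-reversing diffeomorphism, for instance a reflection. The resulting closed manifold $M$ carries a circle action, is connected because $N_1$ and $N_2$ are joined along a connected boundary, and contains exactly the fixed points of $M_1$ together with those of $M_2$, with unchanged weights, since only free orbits were excised.

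The main point to handle carefully, and where the signs $\epsilon(p)$ enter, is the orientation. The gluing map must reverse the induced boundary orientation so that the orientations of $N_1$ and $N_2$ extend to a single orientation of $M$; after possibly reversing the orientation of $M_i$ beforehand, the orientation-reversing choice of $\psi$ achieves this. With respect to the chosen orientation of $M$, a fixed point $p$ of $M_i$ contributes its original weights and the sign $\epsilon(p)$ multiplied by $+1$ or $-1$ according to whether the orientation of $M_i$ was preserved or reversed, which is exactly the passage between $\Sigma_{M_i}$ and $-\Sigma_{M_i}$. Collecting contributions yields fixed point data $\pm\Sigma_{M_1} \cup \pm\Sigma_{M_2}$, as claimed. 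A fully detailed verification of the orientation conventions and of the smoothness of the glued structure is carried out in Lemma 2.2 and Proposition 2.4 of \cite{L2}.
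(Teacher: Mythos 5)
Your proposal is correct and follows essentially the same route as the paper: the paper likewise produces a free orbit in each $M_i$ via effectiveness, takes invariant tubular neighborhoods $S^1 \times D^{2n-1}$ from the slice theorem, glues equivariantly along them (reversing orientations of the $M_i$ as needed to realize the signs $\pm\Sigma_{M_i}$), and defers the detailed verification to Lemma 2.2 and Proposition 2.4 of \cite{L2}. Your additional observations—triviality of the principal isotropy and the role of $n>1$ in keeping $S^1 \times S^{2n-2}$ connected—are correct elaborations of the same argument.
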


In \cite{K}, Kobayashi shows that the fixed point set of an $S^1$-action on a compact orientable manifold is also orientable.

\begin{lem} \cite{K} \label{l24} Let the circle act on a compact orientable manifold $M$. Then its fixed point set is orientable. \end{lem}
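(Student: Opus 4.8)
The plan is to prove that each connected component of the fixed point set is orientable by showing that the circle action endows the normal bundle of such a component with a complex structure, and then combining this with the orientability of $M$.

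First I would recall the standard fact that, for a smooth action of a compact Lie group on a manifold, the fixed point set is a disjoint union of closed embedded submanifolds; it therefore suffices to prove that an arbitrary connected component $F$ of $M^{S^1}$ is orientable. Averaging an arbitrary Riemannian metric over $S^1$, I obtain an $S^1$-invariant metric on $M$. With respect to this metric the normal bundle $NF$ is identified with the orthogonal complement of $TF$ inside $TM|_F$, and it inherits a fiberwise orthogonal $S^1$-action $\rho_g$, $g \in S^1$.

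The key step is to construct an $S^1$-invariant complex structure on $NF$. Let $A$ be the bundle endomorphism of $NF$ obtained by differentiating the path $\theta \mapsto \rho_{e^{i\theta}}$ at $\theta=0$. Since the metric is invariant, $A$ is skew-symmetric on each fiber. Because $F$ consists precisely of the fixed points of the action, no nonzero normal vector is fixed, and hence $A$ has trivial kernel. Consequently $-A^2$ is a positive-definite symmetric bundle endomorphism, and $J := A(-A^2)^{-1/2}$ is a well-defined smooth bundle endomorphism satisfying $J^2 = -\mathrm{id}$. Thus $J$ makes $NF$ into a complex vector bundle, which is therefore canonically oriented; in particular its first Stiefel-Whitney class vanishes, $w_1(NF)=0$.

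Finally I would combine this with the hypothesis that $M$ is orientable. Orientability of $M$ gives $w_1(TM)=0$, hence $w_1(TM|_F)=0$. Applying the Whitney sum formula to the splitting $TM|_F = TF \oplus NF$ yields $0 = w_1(TM|_F) = w_1(TF) + w_1(NF) = w_1(TF)$, so $TF$ is orientable and therefore $F$ is orientable. Since $F$ was an arbitrary component, the whole fixed point set $M^{S^1}$ is orientable. The one delicate point is the global construction of $J$ over all of $F$: the essential input is that the infinitesimal generator $A$ is invertible, which is exactly the statement that $F$ contains only genuine fixed points, so that no choice made in the averaging procedure affects the conclusion.
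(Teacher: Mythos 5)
The paper itself contains no proof of this lemma: it is quoted directly from Kobayashi \cite{K}, so your argument can only be measured against the classical one, which it essentially reproduces. Your proof is correct. Averaging gives an invariant metric; the infinitesimal generator $A$ of the induced fiberwise orthogonal action on $NF$ is skew-symmetric with trivial kernel; the polar-decomposition operator $J=A(-A^2)^{-1/2}$ is then a smooth complex structure on $NF$ (here $A$ commutes with $(-A^2)^{-1/2}$, being a pointwise polynomial in $-A^2$, so $J^2=-\mathrm{id}$ does hold); hence $NF$ is canonically oriented, $w_1(NF)=0$, and the Whitney formula for $w_1$ together with $w_1(TM|_F)=0$ forces $w_1(TF)=0$. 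Two points deserve to be made explicit rather than asserted. First, the claim that no nonzero normal vector is fixed rests on the local linearization (slice) theorem, which identifies a neighborhood of $p\in F$ equivariantly with a neighborhood of $0$ in $T_pM$ and hence gives $T_pF=(T_pM)^{S^1}$; since $N_pF$ is an invariant complement, its fixed subspace is zero. Second, passing from \emph{fixed by the group} to \emph{in the kernel of $A$} uses connectedness of $S^1$: if $Av=0$ then the orbit $\theta\mapsto\rho_{e^{i\theta}}v$ has vanishing derivative and is constant, and conversely. With those two sentences added your argument is complete, and it is the same in spirit as Kobayashi's original proof, which likewise produces an invertible skew-symmetric endomorphism of the normal bundle (there, the covariant derivative of the Killing field) to obtain even codimension and a canonical normal orientation, and then deduces orientability of the fixed component from that of $M$.
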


Let $w>1$ be a positive integer. Given an effective circle action on a compact oriented manifold $M$, the group $\mathbb{Z}_w$ acts on $M$, as a subgroup of $S^1$, and the set $M^{\mathbb{Z}_w}$ of points that are fixed by the $\mathbb{Z}_w$-action is a union of smaller dimensional submanifolds. In \cite{HH}, H. Herrera and R. Herrera prove a result on the orientability of $M^{\mathbb{Z}_w}$.

\begin{lem} \label{l25} \cite{HH} Let the circle act effectively on a $2n$-dimensional compact oriented manifold. Consider $\mathbb{Z}_w \subset S^1$ and its corresponding action on $M$. If $w$ is odd then the fixed point set $M^{\mathbb{Z}_w}$ is orientable. If $w$ is even and a connected component $Z$ of $M^{\mathbb{Z}_w}$ contains a fixed point of the $S^1$-action, then $Z$ is orientable. \end{lem}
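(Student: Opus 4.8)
The plan is to analyze the normal bundle of each connected component $Z$ of $M^{\mathbb{Z}_w}$ under the residual action and to transfer orientability from the ambient manifold. First note that $M^{\mathbb{Z}_w}$ is a disjoint union of closed submanifolds, being the fixed set of the compact group $\mathbb{Z}_w$, and that since $S^1$ is abelian it commutes with $\mathbb{Z}_w$ and hence preserves $M^{\mathbb{Z}_w}$; as $S^1$ is connected it preserves each connected component $Z$. Along $Z$ we have the $S^1$-equivariant splitting $TM|_Z=TZ\oplus\nu Z$, and since $M$ is oriented, $w_1(TM|_Z)=0$, whence $w_1(TZ)=w_1(\nu Z)$ in $H^1(Z;\mathbb{Z}_2)$. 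Thus $Z$ is orientable if and only if its normal bundle $\nu Z$ is, and the whole problem reduces to orienting $\nu Z$.

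Next I would decompose $\nu Z$ into $\mathbb{Z}_w$-isotypic sub-bundles. Since $Z$ is exactly the fixed set, no trivial summand occurs, so fiberwise $\nu Z$ is a sum of two-dimensional pieces on which a fixed generator $g$ of $\mathbb{Z}_w$ acts by a rotation through an angle $\theta=2\pi k/w$ with $0<k<w$, together with, only when $w$ is even, a \emph{sign} summand on which $g$ acts by $-\mathrm{Id}$ (the case $\theta=\pi$). On each rotation sub-bundle the angle $\theta$ is constant on the connected set $Z$, lies strictly between $0$ and $2\pi$, and differs from $\pi$, so $\sin\theta\neq0$ and the formula $J=\tfrac{1}{\sin\theta}\,(g_\ast-\cos\theta\cdot\mathrm{Id})$ defines a global bundle endomorphism with $J^2=-\mathrm{Id}$. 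This complex structure orients every rotation sub-bundle. When $w$ is odd there is no angle $\theta=\pi$, so $\nu Z$ is a sum of such oriented pieces, hence orientable, and therefore $Z$ is orientable. This settles the odd case with no hypothesis on fixed points, as claimed.

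For $w$ even the only possibly non-orientable part is the sign sub-bundle $E_0$ on which $g$ acts by $-\mathrm{Id}$, and we have $w_1(Z)=w_1(\nu Z)=w_1(E_0)$, so it remains to orient $E_0$. Here I would use the hypothesis that $Z$ contains an $S^1$-fixed point $p$. At $p$ the full circle acts linearly on the fiber $E_0|_p$, with every weight congruent to $w/2$ modulo $w$; in particular no weight is zero, so $E_0|_p$ is an honest complex $S^1$-representation. Consequently $\mathrm{rank}\,E_0$ is even and $E_0|_p$ carries a canonical orientation, the one underlying its complex structure, which the $S^1$-action preserves because a complex-linear action is orientation-preserving. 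Note also that $-1\in S^1$ lies in $\mathbb{Z}_w$, so it fixes $Z$ pointwise and acts on each fiber of $E_0$ by $-\mathrm{Id}$, which preserves orientation since $\mathrm{rank}\,E_0$ is even.

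The remaining and decisive step is to propagate this fiber orientation over all of $Z$, that is, to prove $w_1(E_0)=0$. I would pass to the orientation double cover $\pi\colon\widehat Z\to Z$ of $E_0$; the whole construction being $S^1$-equivariant, $S^1$ acts on $\widehat Z$ covering its action on $Z$, and because $S^1$ preserves the orientation of $E_0|_p$ it fixes both points of $\widehat Z$ lying over $p$. One then wants to show that the free deck involution $\tau$, which commutes with the $S^1$-action, forces $\widehat Z$ to be disconnected, i.e. the cover to be trivial and $E_0$ orientable. I expect this last point to be the main obstacle: equivariance together with a single fixed fiber does not suffice, as the example of complex conjugation on $\mathbb{CP}^2$ shows, where the fixed set $\mathbb{RP}^2$ has non-orientable normal bundle $T\mathbb{RP}^2$; so the argument must use essentially that the involution $-1$ sits inside the connected group $S^1$ acting on the \emph{oriented} manifold $M$, not merely that it is an involution. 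Concretely, I would combine the global orientability of $M$ with the equivariant structure of $\widehat Z$, for instance by comparing the $S^1$-fixed data of $\widehat Z$ and $Z$ under the free involution $\tau$, to rule out the connected case, thereby obtaining $w_1(E_0)=0$ and hence the orientability of $Z$.
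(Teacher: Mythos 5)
The paper itself offers no proof of this lemma: it is imported verbatim from Herrera--Herrera \cite{HH}, so the only question is whether your argument is complete on its own terms. Your odd case is: the reduction $w_1(TZ)=w_1(\nu Z)$ from orientability of $M$, the observation that $\nu Z$ has no trivial $\mathbb{Z}_w$-summand (since $T_z Z=(T_zM)^{\mathbb{Z}_w}$), the isotypic decomposition into rotation sub-bundles, and the complex structure $J=\frac{1}{\sin\theta}(g_*-\cos\theta\,\mathrm{Id})$ constitute a correct, standard proof that $Z$ is orientable for $w$ odd. The opening reductions of your even case are also sound: $w_1(TZ)=w_1(E_0)$, the sign sub-bundle $E_0$ is $S^1$-invariant, and at the fixed point $p$ the fiber $E_0|_p$ is a complex $S^1$-representation (all weights $\equiv w/2\not\equiv 0 \bmod w$), so $\mathrm{rank}\,E_0$ is even and $S^1$ fixes both sheets of the orientation cover $\widehat Z$ of $E_0$ over $p$.

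But the proof stops at exactly the decisive point, and you say so yourself: you never show $\widehat Z$ is disconnected, only that you ``would combine'' the orientability of $M$ with the equivariant structure ``to rule out the connected case.'' This is a genuine gap, not a routine verification, and it cannot be closed from the properties you have assembled. The data you end with on $\widehat Z$ --- a connected $S^1$-space with a free involution $\tau$ commuting with the action and with both preimages of $p$ fixed --- is actually realized: take the double cover $S^2\to\mathbb{RP}^2$ with $S^1$ acting by rotation downstairs and its lift upstairs; so no contradiction follows from these properties alone, and any completion must go back to the bundle $E_0$ itself and to how it sits inside the oriented manifold $M$, which is precisely the content of the argument in \cite{HH} that you are trying to reproduce. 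Moreover the hypothesis you are trying to exploit is genuinely global, not local at $p$: in the oriented $S^1$-manifold $M=S^1\times_{\mathbb{Z}_2}\mathbb{CP}^2$ (antipodal map on $S^1$ times complex conjugation on $\mathbb{CP}^2$, with $S^1$ rotating the first factor), the fixed set $M^{\mathbb{Z}_2}$ has the non-orientable component $Z=S^1\times\mathbb{RP}^2$, whose normal bundle $E_0$ has even rank $2$, carries a fiberwise $-\mathrm{Id}$ action of $-1$, and is $S^1$-equivariant --- everything in your even-case setup except an $S^1$-fixed point (cross with a trivially-acted-on circle if you want the ambient dimension even). So ``even rank plus equivariance plus $-\mathrm{Id}$ on fibers plus oriented $M$'' is consistent with non-orientability of $Z$; the entire difficulty of the even case is the step you left open.
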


We shall discuss consequences of Theorem \ref{t21}.

\begin{lem} \label{l22} Let the circle act on a $2n$-dimensional compact oriented manifold $M$ with a discrete fixed point set. Let $w$ be the smallest weight that occurs among all the fixed points. Then the number of times the weight $w$ occurs at the fixed points of sign $+1$, counted with multiplicity, is equal to the number of times the weight $w$ occurs at the fixed points of sign $-1$, counted with multiplicity. \end{lem}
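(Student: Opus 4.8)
The plan is to exploit the fact that the rational function on the right-hand side of Theorem~\ref{t21} is identically constant in $t$, and to extract the coefficient of $t^w$ in its Taylor expansion at $t=0$, where $w$ is the smallest weight occurring among all the fixed points.

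First I would observe that each factor admits the expansion
$$\frac{1+t^{w_p^i}}{1-t^{w_p^i}} = 1 + 2\sum_{k\geq 1} t^{k w_p^i},$$
which is analytic at $t=0$, since every denominator equals $1$ there. Consequently the product $\prod_{i=1}^n \frac{1+t^{w_p^i}}{1-t^{w_p^i}}$ and the whole sum $F(t) := \sum_{p \in M^{S^1}} \epsilon(p)\prod_{i=1}^n \frac{1+t^{w_p^i}}{1-t^{w_p^i}}$ are analytic at $t=0$ and possess honest power series expansions in $t$. Since Theorem~\ref{t21} asserts that $F(t) = \textrm{sign}(M)$ is constant, every coefficient of $t^j$ with $j\geq 1$ in this expansion must vanish.

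Next I would compute the coefficient of $t^w$ coming from a single fixed point $p$. Because $w$ is the global minimum, every weight satisfies $w_p^i \geq w$. Hence, in the expanded product, the only way to produce the monomial $t^w$ is to take the term $2t^{w_p^i}$ from exactly one factor having $w_p^i = w$ and the constant term $1$ from all the others; any other choice of terms contributes a power of $t$ of degree at least $2w > w$. Therefore the coefficient of $t^w$ in the product at $p$ equals $2 n_w(p)$, where $n_w(p)$ denotes the number of indices $i$ with $w_p^i = w$, that is, the multiplicity of the weight $w$ at $p$.

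Summing over all fixed points, the coefficient of $t^w$ in $F(t)$ equals $2\sum_{p} \epsilon(p)\, n_w(p)$, and this must be zero. Splitting the sum according to the sign of $\epsilon(p)$ then yields
$$\sum_{p:\,\epsilon(p)=+1} n_w(p) = \sum_{p:\,\epsilon(p)=-1} n_w(p),$$
which is precisely the claimed equality of multiplicities. I do not anticipate a serious obstacle here; the only point requiring care is the bookkeeping that no monomial of degree $w$ can arise from a product of two or more nonconstant terms, which is guaranteed exactly by $w$ being the smallest weight overall.
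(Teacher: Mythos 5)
Your proof is correct and follows essentially the same route as the paper: both expand each factor $\frac{1+t^{w_p^i}}{1-t^{w_p^i}}$ of the Atiyah--Singer formula (Theorem~\ref{t21}) as a power series at $t=0$, observe that minimality of $w$ forces the coefficient of $t^w$ at each fixed point $p$ to be twice the multiplicity of $w$ at $p$, and conclude from the constancy of $\textrm{sign}(M)$ that $\sum_{p}\epsilon(p)N_p(w)=0$. The only cosmetic difference is that the paper writes each factor as $(1+t^{w_p^i})\sum_{j\geq 0}t^{jw_p^i}$ rather than $1+2\sum_{k\geq 1}t^{kw_p^i}$, which is the same series.
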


\begin{proof}
By Theorem \ref{t21}, the signature of $M$ is
\begin{center}
$\displaystyle{\textrm{sign}(M) = \sum_{p \in M^{S^1}} \epsilon(p) \prod_{i=1}^{n} \frac{ (1+t^{w_p^i})}{(1-t^{w_p^i})} = \sum_{p \in M^{S^1}} \epsilon(p) \prod_{i=1}^{n} [ (1+t^{w_p^i}) ( \sum_{j=0}^{\infty} t^{j w_p^i} )].}$
\end{center}
Let $w$ be the smallest (positive) weight. At each fixed point $p$,
\begin{center}
$\displaystyle \prod_{i=1}^{n} [(1+t^{w_p^i}) ( \sum_{j=0}^{\infty} t^{j w_p^i} )] = 1+ 2N_p(w)t^w + t^{w+1}f_p(t)$,
\end{center}
where $N_p(w)$ is the number of times the weight $w$ occurs at $p$ and $f_p(t)$ is an infinite polynomial. Therefore,
\begin{center}
$\displaystyle \textrm{sign}(M) = \sum_{p \in M^{S^1}} \epsilon(p) [1+ 2N_p(w)t^w + t^{w+1}f_p(t)]$.
\end{center}
The signature of $M$ is independent of the indeterminate $t$ and is a constant. Collecting $t^w$-terms, we have
\begin{center}
$\sum_{p \in M^{S^1}} \epsilon(p) N_p(w)=0$. \end{center} \end{proof}

With Lemma \ref{l25}, the following lemma is obtained as an application of Lemma \ref{l22}.

\begin{lem} \label{l26} Let the circle act on a compact oriented manifold with a discrete fixed point set. For each positive integer $w$, the number of times $w$ occurs as weights among all the fixed points, counted with multiplicity, is even. \end{lem}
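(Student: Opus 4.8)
The plan is to reduce the claim, for a fixed positive integer $w$, to an application of Lemma~\ref{l22} on each connected component of the $\mathbb{Z}_w$-fixed set, where the weight $w$ will play the role of the smallest weight of an induced action. First I would make the action effective: if $\mathbb{Z}_k \subset S^1$ is the maximal subgroup acting trivially, then $k$ divides every weight $w_p^i$, and passing to the effective $S^1/\mathbb{Z}_k$-action divides all weights by $k$. Since the number of occurrences of $w$ as a weight of the original action equals the number of occurrences of $w/k$ for the effective action (and is zero unless $k \mid w$, which is still even), it suffices to treat the effective case.

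So assume the action is effective and fix $w$. I would consider $\mathbb{Z}_w \subset S^1$ and its fixed set $M^{\mathbb{Z}_w}$, a disjoint union of closed submanifolds. Only the components containing an $S^1$-fixed point can contribute occurrences of the weight $w$, so I restrict attention to such a component $Z$. The key observation is that a direction $L_i$ at a fixed point $p$ is fixed by $\mathbb{Z}_w$ precisely when $w \mid w_p^i$; hence $T_pZ = \bigoplus_{w \mid w_p^i} L_i$, and $Z$ contains $p$ as an isolated point of its $S^1$-fixed set, forcing $\dim Z$ to be even. The quotient circle $S^1/\mathbb{Z}_w \cong S^1$ acts on $Z$ with the same discrete fixed set $Z \cap M^{S^1}$, and at each fixed point $q \in Z$ the weights of this induced action are exactly $\{\, w_q^i/w : w \mid w_q^i \,\}$, all positive integers that are $\geq 1$. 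In particular the induced weight $1$ occurs at $q$ with multiplicity equal to the multiplicity of the weight $w$ at $q$ in $M$, so the total number of occurrences of $w$ in $M$ equals $\sum_Z (\text{number of occurrences of the induced weight } 1 \text{ in } Z)$.

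Now I would apply Lemma~\ref{l22} to each $Z$ with its induced circle action. Since every induced weight is $\geq 1$, either $1$ is the smallest weight occurring on $Z$, in which case Lemma~\ref{l22} gives that it occurs equally often at fixed points of sign $+1$ and of sign $-1$, so its total multiplicity on $Z$ is even; or $1$ does not occur at all, contributing $0$. Either way each summand is even, hence so is the total number of occurrences of $w$, as desired. The step I expect to require the most care is guaranteeing that each relevant $Z$ is orientable, so that signs and Lemma~\ref{l22} apply: this is exactly where Lemma~\ref{l25} enters, since for odd $w$ all of $M^{\mathbb{Z}_w}$ is orientable, while for even $w$ it only guarantees orientability of components containing an $S^1$-fixed point — but those are precisely the components $Z$ that matter here. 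I would also double-check the bookkeeping in the effectiveness reduction and the identification of the induced weights, but these are routine once the correspondence $w \leftrightarrow 1$ is set up.
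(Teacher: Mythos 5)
Your proposal is correct and takes essentially the same route as the paper: decompose the occurrences of $w$ over the connected components $Z$ of $M^{\mathbb{Z}_w}$ that contain $S^1$-fixed points, orient each such $Z$ via Lemma \ref{l25}, and apply Lemma \ref{l22} to the circle action on $Z$. The only differences are cosmetic: you pass to the induced action of $S^1/\mathbb{Z}_w$ (so the relevant weight becomes $1$) where the paper simply restricts the $S^1$-action to $Z$ (so the smallest weight is $w$ itself), and you spell out the effectiveness reduction and the trivial case where $w$ does not occur on a given component, points the paper leaves implicit.
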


\begin{proof} Consider the set $M^{\mathbb{Z}_w}$ of points that are fixed by the $\mathbb{Z}_w$-action, where $\mathbb{Z}_w$ acts on $M$ as a subgroup of $S^1$. Let $Z$ be a connected component of $M^{\mathbb{Z}_w}$ that contains an $S^1$-fixed point. By Lemma \ref{l25}, $Z$ is orientable. Fix an orientation of $Z$. The circle action on $M$ restricts to a circle action on $Z$. The circle action on $Z$ has $Z \cap M^{S^1}$ as a fixed point set. The smallest weight of the $S^1$-action on $Z$ is $w$. By applying Lemma \ref{l22} to the $S^1$-action on $Z$, the number of times the weight $w$ occurs at the fixed points of $Z \cap M^{S^1}$ is even. \end{proof}

An immediate consequence of Lemma \ref{l26} is that if there is an odd number of fixed points, then the dimension of the manifold is a multiple of 4.

\begin{cor} \label{c27} Let the circle act on a compact oriented manifold $M$. If the number of fixed points is odd, the dimension of the manifold is divisible by four. \end{cor}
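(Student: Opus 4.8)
The plan is to count the total number of weights occurring across all fixed points in two different ways and compare parities. Write $\dim M = 2n$ and let $k$ denote the number of fixed points, which we assume is odd. Since each fixed point $p$ carries exactly $n$ weights $w_p^1,\dots,w_p^n$, the total number of weights appearing in the fixed point data of $M$, counted with multiplicity, is precisely $nk$.

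On the other hand, I would organize this same total by weight value. For each positive integer $w$, let $c(w)$ be the number of times $w$ occurs as a weight among all fixed points, counted with multiplicity. Then the total count is $\sum_{w \geq 1} c(w)$, where only finitely many terms are nonzero since $M^{S^1}$ is finite. By Lemma \ref{l26}, each $c(w)$ is even, so this sum is a finite sum of even integers and is therefore even. Equating the two counts gives that $nk$ is even.

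The conclusion is now immediate from a parity argument: since $k$ is odd by hypothesis, the only way for the product $nk$ to be even is for $n$ to be even. Writing $n = 2m$, we obtain $\dim M = 2n = 4m$, which is divisible by four, as desired.

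I expect no genuine obstacle here: the entire content is the observation that the dimension count $nk$ and the weight-value count $\sum_w c(w)$ are the same integer, at which point Lemma \ref{l26} supplies the parity. The only thing to be careful about is that both counts must be taken \emph{with multiplicity}, so that a weight value repeated at a single fixed point is tallied correctly on both sides; this is exactly the convention under which Lemma \ref{l26} was stated, so the two bookkeeping methods agree term for term.
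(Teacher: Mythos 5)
Your proof is correct and is essentially the paper's own argument: both count the total number $nk$ of weights with multiplicity, invoke Lemma \ref{l26} to conclude this total is even, and deduce $n$ is even since $k$ is odd (the paper merely phrases this as a proof by contradiction rather than directly).
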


\begin{proof} Assume on the contrary that $\dim M=2n$ is not divisible by four. Then $n$ is odd. Let $k$ be the number of fixed points. Then the total number $nk$ of weights among all the fixed points, counted with multiplicity is odd. On the other hand, by Lemma \ref{l26}, for each positive integer $w$, the number of times $w$ occurs as weights among all the fixed points, counted with multiplicity is even. In particular, the total number $nk$ of weights among all the fixed points counted with multiplicity must be even, which leads to a contradiction. \end{proof}

If there are two fixed points, then the fixed point data is classified.

\begin{theorem} \label{t28} \cite{Kos2}, \cite{L2}
Let the circle act on a compact oriented manifold with two fixed points $p$ and $q$. Then the weights at $p$ and $q$ are equal and $\epsilon(p)=-\epsilon(q)$. \end{theorem}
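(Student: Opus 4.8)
The plan is to extract everything from the rigidity of the signature in Theorem \ref{t21}. Write $n=\tfrac12\dim M$ and, after orienting each weight space so that every weight is a positive integer, set
\[
P(t)=\prod_{i=1}^{n}\frac{1+t^{w_p^i}}{1-t^{w_p^i}},\qquad
Q(t)=\prod_{i=1}^{n}\frac{1+t^{w_q^i}}{1-t^{w_q^i}}.
\]
Theorem \ref{t21} then reads $\epsilon(p)P(t)+\epsilon(q)Q(t)=\mathrm{sign}(M)$, a constant in $t$, and evaluating at $t=0$ via \eqref{eq:1} identifies this constant as $\epsilon(p)+\epsilon(q)$.

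First I would pin down the relative sign by examining the behaviour as $t\to 1$. Each factor satisfies $\tfrac{1+t^{w}}{1-t^{w}}\sim \tfrac{2}{w(1-t)}$ near $t=1$, so $P(t)\sim 2^{n}\big[(1-t)^{n}\prod_i w_p^i\big]^{-1}$ and likewise for $Q$. Since the left-hand side $\epsilon(p)P(t)+\epsilon(q)Q(t)$ is constant, it is regular at $t=1$, so the coefficient of the top-order pole $(1-t)^{-n}$ must vanish; that coefficient is proportional to $\epsilon(p)/\prod_i w_p^i+\epsilon(q)/\prod_i w_q^i$. As both products of weights are positive, this can vanish only if $\epsilon(p)=-\epsilon(q)$ (and incidentally $\prod_i w_p^i=\prod_i w_q^i$). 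Consequently $\mathrm{sign}(M)=\epsilon(p)+\epsilon(q)=0$, and therefore $P(t)\equiv Q(t)$ as rational functions.

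It then remains to deduce the equality of the two weight multisets from $P\equiv Q$. Here I would compare pole orders at roots of unity: for a primitive $d$-th root of unity $\zeta$, the factor $\tfrac{1+t^{w}}{1-t^{w}}$ has a simple pole at $\zeta$ exactly when $d\mid w$ (and no cancelling zero, since $1+\zeta^{w}=2$). Hence the order of the pole of $P$ at $\zeta$ equals the number of weights $w_p^i$ divisible by $d$, and similarly for $Q$. The identity $P\equiv Q$ forces, for every $d\ge 1$, that $p$ and $q$ have the same number of weights divisible by $d$. Taking $d$ to be the largest weight that appears shows that the top weight occurs with equal multiplicity at $p$ and $q$ (nothing strictly larger is divisible by it); removing those weights and inducting downward — equivalently, Möbius inversion over the divisibility lattice — yields that every value occurs equally often at $p$ and at $q$, which is the desired equality of weights.

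The main obstacle is this final step: converting the analytic identity $P\equiv Q$ into the combinatorial statement that the weight multisets coincide. The pole-order bookkeeping above is the cleanest route, since it reduces the problem to recovering a finite multiset of positive integers from its divisibility-count function, a standard inversion argument. By contrast, the sign statement $\epsilon(p)=-\epsilon(q)$ is essentially forced, once one notices that the $t\to 1$ singularity of the rigid signature expression must cancel.
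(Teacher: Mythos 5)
The paper itself gives no proof of Theorem \ref{t28}; it cites \cite{Kos2} and \cite{L2}, so your proposal must be judged on its own. The first half of your argument is correct and clean: the Laurent expansion of each factor at $t=1$, the forced cancellation of the order-$n$ pole giving $\epsilon(p)=-\epsilon(q)$ (and $\prod_i w_p^i=\prod_i w_q^i$), hence $\mathrm{sign}(M)=\epsilon(p)+\epsilon(q)=0$ and $P\equiv Q$, is a valid use of the rigidity in Theorem \ref{t21}.

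The gap is in the final step. Your claim that \emph{the order of the pole of $P$ at a primitive $d$-th root of unity $\zeta$ equals the number of weights $w_p^i$ divisible by $d$} is false in general: a factor $(1+t^{w})/(1-t^{w})$ with $d\nmid w$ can have a \emph{zero} at $\zeta$ (exactly when $\zeta^{w}=-1$, i.e.\ $d$ is even and $w\equiv d/2 \pmod d$), and such zeros cancel poles coming from the other factors; your parenthetical ``no cancelling zero'' only rules out cancellation within a single factor. Concretely, if the weights at $p$ are $\{1,2\}$ then
\[
P(t)=\frac{(1+t)(1+t^{2})}{(1-t)(1-t^{2})}=\frac{1+t^{2}}{(1-t)^{2}},
\]
which is regular at $\zeta=-1$ even though one weight is divisible by $2$. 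The correct bookkeeping for even $d$ is
\[
-\operatorname{ord}_{\zeta}P \;=\; \#\{i : d\mid w_p^i\}\;-\;\#\{i : w_p^i\equiv d/2 \ (\mathrm{mod}\ d)\},
\]
so taking $d=W$ the largest weight only yields $\mathrm{mult}_p(W)-\mathrm{mult}_p(W/2)=\mathrm{mult}_q(W)-\mathrm{mult}_q(W/2)$, and your downward induction fails at its very first step whenever $W$ is even and $W/2$ also occurs as a weight. The conclusion is nevertheless true and your strategy is repairable: write $(1+t^{w})/(1-t^{w})=(1-t^{2w})/(1-t^{w})^{2}$ and use unique factorization of $1-t^{m}=\prod_{d\mid m}\Phi_d(t)$ into cyclotomic polynomials. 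Comparing multiplicities of $\Phi_d$ in $P\equiv Q$ gives, with $A(d)=\#\{i:d\mid w_p^i\}$ and $B(d)=\#\{i:d\mid w_q^i\}$, the identities $A(d)=B(d)$ for odd $d$ and $A(e)-B(e)=2\bigl(A(2e)-B(2e)\bigr)$ for all $e$; induction on the power of $2$ dividing $d$ then forces $A\equiv B$, and M\"obius inversion over the divisibility lattice recovers the equality of the weight multisets.
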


The following result concerns the vanishing of the signature of a manifold equipped with a certain type of a circle action. This will be used in the proof of Theorem \ref{t11}.

\begin{theorem} \label{t29}
Let the circle act on a $2n$-dimensional compact oriented manifold $M$ with a discrete fixed point set. Suppose that the weights at each fixed point are $\{a_1,\cdots,a_n\}$ for some positive integers $a_1,\cdots,a_n$. Then the number of fixed points $p$ with $\epsilon(p)=+1$ and that with $\epsilon(p)=-1$ are equal. In particular, the signature of $M$ vanishes.
\end{theorem}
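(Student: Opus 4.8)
The plan is to apply the Atiyah--Singer signature formula of Theorem~\ref{t21} directly, exploiting the hypothesis that \emph{every} fixed point carries the same multiset of weights. First I would set
\[
F(t) = \prod_{i=1}^{n} \frac{1+t^{a_i}}{1-t^{a_i}},
\]
which by assumption is the common value of $\prod_{i=1}^{n} \frac{1+t^{w_p^i}}{1-t^{w_p^i}}$ at every fixed point $p$. Since this factor no longer depends on $p$, the signature formula factors out of the sum as
\[
\textrm{sign}(M) = \Big( \sum_{p \in M^{S^1}} \epsilon(p) \Big) \cdot F(t).
\]

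The next step is to use that the left-hand side is a constant, independent of the indeterminate $t$. Evaluating at $t=0$ (equivalently, using \eqref{eq:1}) gives $\textrm{sign}(M) = \sum_{p} \epsilon(p)$ and $F(0)=1$, so subtracting yields the identity
\[
\Big( \sum_{p \in M^{S^1}} \epsilon(p) \Big) \big( F(t) - 1 \big) = 0
\]
for all $t$. To conclude I would observe that $F(t)-1$ is not the zero function: for instance $F$ has a genuine pole at $t=1$, since the denominator $\prod_{i}(1-t^{a_i})$ vanishes there while the numerator equals $2^{n} \neq 0$, so $F \not\equiv 1$. Hence $F(t)-1 \not\equiv 0$, which forces $\sum_{p \in M^{S^1}} \epsilon(p) = 0$.

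Finally, $\sum_{p} \epsilon(p) = 0$ says precisely that the number of fixed points with $\epsilon(p)=+1$ equals the number with $\epsilon(p)=-1$, and combined with \eqref{eq:1} this gives $\textrm{sign}(M)=0$. I do not anticipate a genuine obstacle here; the only point needing a moment's care is confirming that the common factor $F(t)$ is non-constant, which is exactly what makes the vanishing of $\sum_{p} \epsilon(p)$ unavoidable. One could equally extract the coefficient of the lowest-degree term $t^{\min_i a_i}$ in the power-series expansion of $F(t)-1$, where the coefficient is a positive integer, to reach the same conclusion.
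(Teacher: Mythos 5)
Your proof is correct and follows essentially the same route as the paper: apply Theorem~\ref{t21}, factor the common product $F(t)$ out of the sum, and use the constancy of the signature to force $\sum_{p}\epsilon(p)=0$. The only difference is that you spell out why $F(t)$ is non-constant (pole at $t=1$, or the positive coefficient of $t^{\min_i a_i}$), a detail the paper leaves implicit.
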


\begin{proof} By Theorem \ref{t21}, the signature of $M$ is
\begin{center}
$\displaystyle{\textrm{sign}(M) = \sum_{p \in M^{S^1}} \epsilon(p) \prod_{i=1}^{n} \frac{(1+t^{w_p^i})}{(1-t^{w_p^i})}}=\sum_{p \in M^{S^1}} \epsilon(p) \prod_{i=1}^{n} \frac{(1+t^{a_i})}{(1-t^{a_i})}$.
\end{center}
Since the signature of $M$ is a constant, we must have that $\textrm{sign}(M)=0$, and the number of fixed points $p$ with $\epsilon(p)=+1$ and that with $\epsilon(p)=-1$ are equal. \end{proof}

An $S^1$-action is called semi-free, if the action is free outside the fixed point set. In \cite{TW}, Tolman and Weitsman classify a symplectic semi-free circle action with a discrete fixed point set. Li reproves this in \cite{L1}. The case of almost complex manifolds is dealt in \cite{Jan3}. For a semi-free circle action, all the weights at the fixed points are 1. We classify a semi-free action on an oriented manifold with a discrete fixed point set, and solve the existence issue.

\begin{theorem} \label{t210} Let the circle act semi-freely on a compact oriented manifold $M$ with a discrete fixed point set. Then there is an even number of fixed points, and the signature of $M$ vanishes. Moreover, given any positive integer $k$, there exists a semi-free circle action on a compact oriented manifold $M$ with $2k$ fixed points. \end{theorem}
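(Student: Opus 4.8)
The plan is to separate the two assertions: the structural statement will follow directly from Theorem \ref{t29}, and the existence statement from the rotation example in the Introduction.

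For the structural part, the first step is to observe that a semi-free action with a discrete fixed point set forces every weight to equal $1$. Indeed, let $p$ be a fixed point with weights $w_p^1,\cdots,w_p^n$, and consider the $S^1$-invariant real plane in $T_pM$ corresponding to $L_i$, on which $g$ acts as $g\cdot z=g^{w_p^i}z$. A nonzero point $v$ on this plane (taken small enough to lie in a linearizing slice about $p$) has stabilizer precisely $\mathbb{Z}_{|w_p^i|}\subset S^1$; since $v$ is not an $S^1$-fixed point, semi-freeness forces $|w_p^i|=1$. Orienting each $L_i$ so that all weights are positive, every fixed point then carries the weights $\{1,\cdots,1\}$. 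As the weights agree at every fixed point, Theorem \ref{t29} applies verbatim and yields that the number of fixed points of sign $+1$ equals the number of sign $-1$; hence the total number of fixed points is even and $\textrm{sign}(M)=0$.

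For the existence part, I would start from the standard rotation on $S^2$, given by $g\cdot(z,x)=(gz,x)$ with $|z|^2+x^2=1$. This action has exactly the two poles as fixed points and is free elsewhere, so it is semi-free with two fixed points. Taking the disjoint union of $k$ copies of $S^2$ then gives a compact oriented manifold carrying a semi-free circle action with exactly $2k$ fixed points, which is all the statement requires. If instead one wants a connected model, one works in dimension $2n\geq 4$ with the rotation $g\cdot(z_1,\cdots,z_n,x)=(gz_1,\cdots,gz_n,x)$ on $S^{2n}$, which is semi-free with two fixed points, and forms the equivariant sum of $k$ such copies via Lemma \ref{l23}.

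I expect no serious obstacle once Theorem \ref{t29} is available; the only step needing care is the reduction showing that semi-freeness together with discreteness of the fixed set forces all weights to be $1$, so that the hypothesis of Theorem \ref{t29} is met. In the connected version of the existence part, the one point to verify is that equivariant sum preserves semi-freeness, which holds because the gluing in Lemma \ref{l23} is performed along free orbits and does not disturb the fixed points, leaving the resulting action free away from its $2k$ fixed points.
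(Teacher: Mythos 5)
Your proposal is correct and takes essentially the same route as the paper: the parity and signature claims follow by applying Theorem~\ref{t29} with all $a_i=1$ (after noting that semi-freeness forces every weight to equal $1$, a point the paper states without proof and you justify via stabilizers), and existence comes from semi-free rotations of spheres combined along free orbits via Lemma~\ref{l23}. Your disjoint-union-of-$S^2$'s alternative is also legitimate since the statement does not require connectedness, and your citation of Lemma~\ref{l23} for the equivariant sum is the correct one (the paper's own proof cites Lemma~\ref{l22} there, which is a typo).
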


\begin{proof} In Theorem \ref{t29}, take $a_i=1$ for all $i$. The number of fixed points $p$ with $\epsilon(p)=+1$ and that with $\epsilon(p)=-1$ are equal. In particular there is an even number of fixed points. Moreover, the signature of $M$ vanishes.

For the latter part, consider a rotation of $S^{2n}$ as in the introduction with taking $a_i=1$ for all $i$. It has two fixed points, the north pole and the south pole, and its fixed point data is $\{+,1,\cdots,1\}$ and $\{-,1,\cdots,1\}$; the action is semi-free. Take $k$-copies of such a manifold, and take an equivariant sum along free orbits of the manifolds in the sense of Lemma \ref{l22}. \end{proof}

We quickly review the classification of $S^1$-actions on compact oriented surfaces. This will be used in the proof of Theorem \ref{t11}. Given a manifold $M$, let $\chi(M)$ be the Euler number of $M$.

\begin{theo} \cite{K} \label{t211} Let the circle act on a compact oriented manifold $M$. Then \begin{center} $\displaystyle \chi(M)=\sum_{Z \subset M^{S^1}} \chi(Z)$. \end{center} \end{theo}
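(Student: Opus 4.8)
The plan is to deduce this identity from the Lefschetz fixed point theorem, using that the Euler characteristic is the Lefschetz number of the identity map together with the fact that, on a connected group, every element is homotopic to the identity. So the formula will come from evaluating a single invariant in two ways.

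First I would recall that for any continuous self-map $f$ of $M$ the Lefschetz number $L(f)=\sum_k (-1)^k \operatorname{tr}\big(f^*\colon H^k(M;\mathbb{Q})\to H^k(M;\mathbb{Q})\big)$ depends only on the homotopy class of $f$, and that $L(\operatorname{id}_M)=\sum_k (-1)^k \dim H^k(M;\mathbb{Q})=\chi(M)$. For each $g\in S^1$, choosing a path from the identity to $g$ in $S^1$ and letting it act on $M$ gives a homotopy from $\operatorname{id}_M$ to the diffeomorphism $x\mapsto g\cdot x$; hence $L(g)=\chi(M)$ for every $g\in S^1$. Next I would choose $g$ generically: since $M$ is compact, the $S^1$-action has only finitely many orbit types, so only finitely many subgroups of $S^1$ occur as isotropy groups, and the proper ones among them are finite cyclic. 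Taking $g$ outside every such proper isotropy subgroup, the fixed point set of $x\mapsto g\cdot x$ is exactly $M^{S^1}=\bigsqcup_Z Z$, a disjoint union of closed submanifolds.

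Finally I would apply the clean (nondegenerate) form of the Lefschetz fixed point theorem to $x\mapsto g\cdot x$. For each component $Z$ the normal bundle $\nu_Z$ is an $S^1$-equivariant bundle with no trivial summand, so after choosing an invariant metric it splits into $2$-dimensional rotation subbundles of nonzero weights; writing $g=e^{i\theta}$, the differential $dg$ acts on a weight-$w$ plane as rotation by $w\theta$, and $\det(\operatorname{id}-dg)=2(1-\cos w\theta)>0$ for generic $\theta$. Hence $\det(\operatorname{id}-dg|_{\nu_Z})>0$ on all of $Z$, each fixed manifold $Z$ is nondegenerate, and its contribution to $L(g)$ is $\operatorname{sign}\det(\operatorname{id}-dg|_{\nu_Z})\cdot\chi(Z)=+\chi(Z)$. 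Summing over components gives $L(g)=\sum_Z \chi(Z)$, and combining this with $L(g)=\chi(M)$ yields the claimed formula.

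The main obstacle is invoking the clean-fixed-set version of the Lefschetz theorem correctly: one must verify the nondegeneracy of each fixed component and, crucially, the positivity of $\det(\operatorname{id}-dg)$ on the normal bundle, which is exactly what forces every component to enter with coefficient $+1$ rather than $\pm1$. An equivalent route avoids the cohomological Lefschetz number altogether: the generating vector field of the action vanishes precisely on $M^{S^1}$ with a nondegenerate (Morse--Bott type) zero set, and the Poincar\'e--Hopf theorem in its form for nondegenerate zero submanifolds produces the same sum $\sum_Z \chi(Z)$, with the identical normal-determinant sign computation showing each contribution is $+\chi(Z)$.
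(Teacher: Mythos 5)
The paper itself offers no proof of this statement: it is quoted as a known result with a citation to Kobayashi's paper \emph{Fixed points of isometries}, so there is no in-paper argument to compare against. Your proof is correct, and it is in fact essentially the classical argument behind the cited result. The chain of reductions is sound: homotopy invariance of the Lefschetz number plus connectedness of $S^1$ gives $L(g)=\chi(M)$ for every $g$; compactness gives finitely many isotropy subgroups, all proper ones finite, so a generic $g$ (equivalently, a topological generator of $S^1$, for which $M^g=M^{\overline{\langle g\rangle}}=M^{S^1}$ by continuity --- a route that lets you bypass the orbit-type finiteness altogether) has fixed set exactly $M^{S^1}$; and the clean Lefschetz theorem applies because the normal bundle of each component $Z$ carries an $S^1$-action with only nonzero weights, so $dg$ restricted to $\nu_Z$ is pointwise a direct sum of rotations by angles $w\theta\notin 2\pi\mathbb{Z}$, giving $\det(\mathrm{id}-dg|_{\nu_Z})=\prod_i\bigl(2-2\cos(w_i\theta)\bigr)>0$. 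That positivity is indeed the crux, since it forces every component to contribute $+\chi(Z)$ rather than $\pm\chi(Z)$; note the pointwise rotation decomposition suffices here even though the weight subbundles need not split off globally. One small point worth making explicit if you write this up: the clean (fixed-submanifold) form of the Lefschetz theorem, with local contribution $\mathrm{sign}\det(\mathrm{id}-dg|_{\nu_Z})\cdot\chi(Z)$ when that sign is constant along $Z$, is itself a theorem that needs either a citation or the standard perturbation proof (perturb $g$ along a vector field on $Z$ with isolated zeros and apply the ordinary Lefschetz--Hopf formula plus Poincar\'e--Hopf on $Z$); your parenthetical Morse--Bott/Poincar\'e--Hopf alternative via the generating vector field is the same computation in different clothing and is equally valid. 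Also note that orientability of $M$, assumed in the statement, is never needed in your argument --- the identity holds for any compact smooth $S^1$-manifold.
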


The Euler number of a compact oriented surface $M$ is $2-2g$, where $g$ is the number of genus of $M$. The Euler number of a point is 1. Therefore, the following lemma holds.

\begin{lem} \label{l212}
Let $M$ be a compact connected oriented surface of genus $g$.
\begin{enumerate}
\item If $g=0$, i.e., $M$ is the 2-sphere $S^2$, then any circle action on it has two fixed points.
\item If $g=1$, i.e., $M$ is the 2-torus $\mathbb{T}^2$, then any circle action on it is fixed point free.
\item If $g>1$, then $M$ does not admit a non-trivial circle action.
\end{enumerate}
\end{lem}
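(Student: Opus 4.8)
The plan is to reduce everything to Kobayashi's Euler-characteristic formula (Theorem \ref{t211}) once the structure of the fixed point set on a surface is understood. First I would observe that for a non-trivial circle action on a connected compact surface $M$, the fixed point set $M^{S^1}$ is discrete. Each connected component $Z$ of $M^{S^1}$ is a closed submanifold whose normal space at each point carries a fixed-point-free $S^1$-representation, hence splits into non-trivial two-dimensional real irreducibles and has even real rank; thus $Z$ has even codimension in $M$. In dimension $2$ this forces $\dim Z \in \{0,2\}$. A $2$-dimensional component would be open and closed, hence all of the connected $M$, making the action trivial; since we may assume the action is non-trivial (otherwise $M^{S^1}=M$), every component is a point, and by compactness $M^{S^1}$ is finite.

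The second step is to feed this into Theorem \ref{t211}. Since $\chi(Z)=1$ for a point, the formula gives $\chi(M)=\sum_{Z\subset M^{S^1}}\chi(Z)=|M^{S^1}|$, the number of fixed points. Combining this with $\chi(M)=2-2g$ yields $|M^{S^1}|=2-2g$, and the three cases follow by inspection: for $g=0$ we get exactly two fixed points; for $g=1$ we get $|M^{S^1}|=0$, i.e.\ the action is fixed point free; and for $g>1$ we would need $|M^{S^1}|=2-2g<0$, which is impossible for a cardinality, so no non-trivial action can exist, proving part (3).

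I expect no serious obstacle here; the only point that needs care is the very first step, namely ruling out a $1$-dimensional (circle) fixed component as well as a proper $2$-dimensional one. This rests on the even-codimension property of fixed submanifolds rather than on anything special to surfaces, so it is routine but should be stated explicitly. A secondary bookkeeping matter is the status of the trivial action: for $g=0,1$ the assertions are to be read for non-trivial actions (the trivial action fixes all of $M$), which is precisely the hypothesis already built into part (3). One could alternatively invoke Lemma \ref{l24} to record that each fixed component is orientable, but orientability is not actually needed for the counting argument.
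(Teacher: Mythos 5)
Your proposal is correct and follows essentially the same route as the paper: establish that the fixed point set of a non-trivial action is finite, then apply Theorem \ref{t211} to get $|M^{S^1}|=\chi(M)=2-2g$ and read off the three cases. The only difference is that you spell out the even-codimension argument for discreteness, which the paper simply asserts from $\dim M=2$.
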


\begin{proof} Suppose that $M$ admits a circle action. Since $\dim M=2$, the fixed point set is either the empty set or a finite number of points. By Theorem \ref{t211},
\begin{center} $\displaystyle \chi(M)=\sum_{p \in M^{S^1}} 1$, \end{center}
since the Euler number of a point is 1. This implies that $\chi(M) \geq 0$. On the other hand, $\chi(M)=2-2g$. This implies that $g=0$ or 1. If $g=0$, then $M$ is the 2-sphere and it has two fixed points since $2=\chi(M)=\sum_{p \in M^{S^1}} 1$. If $g=1$, then $M$ is the 2-torus and it has no fixed points since $0=\chi(M)=\sum_{p \in M^{S^1}} 1$. \end{proof}

\section{Weight representations} \label{s3}

In this section, we investigate properties that the weights at the fixed points satisfy, in terms of isotropy submanifolds. Our main goal in this section is Lemma \ref{l34}, that will play a crucial role in the proof of Theorem \ref{t11}. For this, we need to introduce technical terminologies. 

Let the circle act on a $2n$-dimensional compact oriented manifold $M$ with a discrete fixed point set. For each $p \in M$, denote by $\alpha_p^M$ the orientation on $T_pM$ given by the orientation on $M$. 

Let $w$ be a positive integer. As a subgroup of $S^1$, $\mathbb{Z}_w$ acts on $M$. Let $M^{\mathbb{Z}_w}$ be the set of points fixed by the $\mathbb{Z}_w$-action and $Z$ a connected component of $M^{\mathbb{Z}_w}$. Assume that $Z$ contains an $S^1$-fixed point $p$, i.e., $Z \cap M^{S^1} \neq \emptyset$. By Lemma \ref{l25}, $Z$ is orientable. Choose an orientation of $Z$. Since $M$ is oriented, then the normal bundle $NZ$ of $Z$ is orientable. Take an orientation on $NZ$ so that the orientation of $T_pZ \bigoplus N_pZ$ is the orientation of $T_pM$. Denote by $\alpha_p^N$ and $\alpha_p^Z$ the orientations on $N_pZ$ and $T_pZ$, respectively.

Let $p\in Z \cap M^{S^1}$ be an $S^1$-fixed point. As explained in the Introduction, the tangent space at $p$ decomposes into $n$ two-dimensional irreducible $S^1$-equivariant real vector spaces $L_1,\cdots,L_n$. Without loss of generality, by permuting the $L_i$'s, assume that $L_1,\cdots,L_m$ are the  summands of $N_pZ$ and $L_{m+1},\cdots,L_n$ are the summands of $T_pZ$, where $2(n-m)$ is the dimension of $Z$, i.e.,
\begin{center}
$T_pM=L_1 \bigoplus \cdots \bigoplus L_n$,

$N_pZ=L_1 \bigoplus \cdots \bigoplus L_m$, and

$T_pZ=L_{m+1} \bigoplus \cdots \bigoplus L_n$.
\end{center}
For each $i$, $L_i$ is isomorphic to a one-dimensional $S^1$-equivariant complex space, on which the action is given by multiplication by $g^{w_p^i}$, where $g \in S^1$ and $w_p^i$ is a non-zero integer. As in the Introduction, for each $i$, give an orientation on $L_i$ so that $w_p^i$ is positive. The choice of the orientation for each $L_i$ to make each weight $w_p^i$ positive gives orientation on $N_pZ$, $T_pZ$, and hence $T_pM$. Denote the orientations by $\beta_p^N$, $\beta_p^Z$, and $\beta_p^M$, respectively. 

Finally, define three parameters, depending on the two different orientations we made on each of $N_pZ$, $T_pZ$, and $T_pM$ in the following way:

\begin{Definition} \label{d31} 
\begin{enumerate}[(1)]
\item $\epsilon_p^N=+1$ if the two orientations $\alpha_p^N$ and $\beta_p^N$ on the normal bundle $N_pZ$ of $Z$ at $p$ agree and $\epsilon_p^N=-1$ otherwise.
\item $\epsilon_p^Z=+1$ if the two orientations $\alpha_p^Z$ and $\beta_p^Z$ on the tangent space $T_pZ$ of $Z$ at $p$ agree and $\epsilon_p^Z=-1$ otherwise.
\item $\epsilon_p^M=+1$ if the two orientations $\alpha_p^M$ and $\beta_p^M$ on the tangent space $T_pM$ of $M$ at $p$ agree and $\epsilon_p^M=-1$ otherwise.
\end{enumerate} 
\end{Definition}
By the definition, $\epsilon(p)=\epsilon_p^M$, where $\epsilon(p)$ is the sign of $p$ introduced in the Introduction. Moreover, it follows that $\epsilon_p^M=\epsilon_p^Z \cdot \epsilon_p^N$.

The next lemma states that if two $S^1$-fixed points $p$ and $q$ lie in the same connected component of $M^{\mathbb{Z}_w}$ for some positive integer $w$, then the weights at $p$ and the weights at $q$ have intimate relations.

\begin{lemma} \label{l31}
Let the circle act on a $2n$-dimensional compact oriented manifold $M$ with a discrete fixed point set. Fix a positive integer $w$. Let $\{p,q\} \subset Z \cap M^{S^1}$, where $Z$ is a connected component of $M^{\mathbb{Z}_w}$ such that $\dim Z=2n-2m$. Fix an orientation of $Z$. Rearrange weights at $p$ so that $\{w_p^1,\cdots,w_p^m\}$ are the weights on the normal bundle $N_pZ$ of $Z$ at $p$ and $\{w_p^{m+1},\cdots,w_p^n\}$ are the weights on the tangent space $T_pZ$ and similarly for $q$. Then there exist a permutation $\sigma \in S_m$ and $\epsilon \in \{-1,1\}^m$ such that $w_p^i \equiv \epsilon(i)w_q^{\sigma(i)} \mod w$ for any $1\leq i \leq m$ and $\epsilon_p^N=\epsilon_q^N \cdot (-1)^{\epsilon_{p,q}^-}$, where $\epsilon_{p,q}^-$ denotes the number of $i$'s such that $\epsilon(i)=-1$. \end{lemma}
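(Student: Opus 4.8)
The plan is to route the comparison between $p$ and $q$ through the normal bundle $NZ$, which (by the choices made just before the statement) is an \emph{oriented}, $\mathbb{Z}_w$-\emph{equivariant} real vector bundle over the connected base $Z$, with $\mathbb{Z}_w$ acting trivially on $Z$ and fibrewise on $NZ$. First I would note that the normal weights $w_p^1,\dots,w_p^m$ are exactly the weights $w_p^i$ with $w\nmid w_p^i$, since the $\mathbb{Z}_w$-fixed directions at $p$ are the $L_i$ on which $\mathbb{Z}_w$ acts trivially, i.e.\ those with $w\mid w_p^i$, and these span $T_pZ=(T_pM)^{\mathbb{Z}_w}$. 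Next I would fix a $\mathbb{Z}_w$-invariant connection on $NZ$ (average any connection over the finite group) and let $\phi\colon N_pZ\to N_qZ$ be parallel transport along a path in $Z$ from $p$ to $q$. Because the connection is $\mathbb{Z}_w$-invariant, $\phi$ is $\mathbb{Z}_w$-equivariant; because $NZ$ carries the fixed bundle orientation $\alpha^N$, the map $\phi$ is orientation-preserving for $\alpha^N$, and its orientation behaviour is independent of the chosen path.

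The congruences then follow from real representation theory. Since $\phi$ is an isomorphism of real $\mathbb{Z}_w$-representations, it carries the $\mathbb{Z}_w$-isotypic decomposition of $N_pZ$ onto that of $N_qZ$. Two weight spaces of $S^1$-weights $a$ and $b$ are isomorphic as real $\mathbb{Z}_w$-representations precisely when $a\equiv\pm b\pmod w$, so matching the isotypic pieces by dimension produces a permutation $\sigma\in S_m$ and signs $\epsilon(i)\in\{\pm1\}$ with $w_p^i\equiv\epsilon(i)\,w_q^{\sigma(i)}\pmod w$. This gives the first assertion.

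The heart of the matter, and the main obstacle, is the orientation bookkeeping yielding $\epsilon_p^N=\epsilon_q^N\,(-1)^{\epsilon_{p,q}^-}$. I would write $\delta\in\{\pm1\}$ for the sign by which $\phi$ distorts the positive-weight orientations, $\phi_*\beta_p^N=\delta\,\beta_q^N$. Since $\phi$ preserves $\alpha^N$ and $\alpha_p^N=\epsilon_p^N\beta_p^N$, a one-line comparison ($\phi_*\alpha_p^N=\alpha_q^N$) gives $\epsilon_p^N\delta=\epsilon_q^N$, so everything reduces to $\delta=(-1)^{\epsilon_{p,q}^-}$. I would compute $\delta$ blockwise on the isotypic sub-bundles (each even-dimensional, so ordering is irrelevant and $\delta$ is the product of blockwise signs). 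On a rotation-type block with $1\le k<w/2$, the generator of $\mathbb{Z}_w$ determines a complex structure $J_k$ as a polynomial in the rotation, so $\phi$ is $J_k$-linear and preserves the $J_k$-orientation; expressing $\beta^N$ through $J_k$ shows a line of weight $\equiv -k$ contributes a sign $-1$ relative to one of weight $\equiv k$, and a short count of the four pairing types shows that the parity of the sign-flips $\epsilon(i)=-1$ in such a block is \emph{forced} to equal the blockwise contribution to $\delta$, independently of how $\sigma$ matches the (possibly mixed) $\equiv k$ and $\equiv -k$ lines. The genuinely delicate case is $k=w/2$, occurring only when $w$ is even: there $\mathbb{Z}_w$ acts as $-\mathrm{Id}$, so equivariance imposes no constraint on the orientation behaviour of $\phi$, while $w_p^i\equiv w/2\equiv -w/2$ lets $\epsilon(i)$ be chosen freely; I would absorb the fixed sign of $\det\phi$ on this block by choosing the parity of its sign-flips accordingly. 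Multiplying the blockwise signs then gives $\delta=(-1)^{\epsilon_{p,q}^-}$ for the chosen $\sigma$ and $\epsilon$, and hence the claimed relation. The one point demanding care is precisely this $k=w/2$ block together with the mixing of $\equiv\pm k$ weights inside a single block, and the parity computation above is what shows a compatible choice always exists.
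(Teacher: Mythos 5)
Your proposal is correct and takes essentially the same route as the paper: both compare the oriented $\mathbb{Z}_w$-representations on the fibers $N_pZ$ and $N_qZ$ of the oriented equivariant bundle $NZ$ over the connected base $Z$, and read off the congruences and the sign relation from that isomorphism. The paper asserts the representation-theoretic bookkeeping in a single sentence, while you supply the underlying details (parallel transport for an invariant connection, the blockwise isotypic sign count, and the $k=w/2$ adjustment), which is precisely what is needed to justify the paper's terse ``Moreover'' claim.
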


\begin{proof} Since $TZ$ is oriented, $NZ$ is an oriented $\mathbb{Z}_w$-bundle over $Z$. Therefore, the $\mathbb{Z}_w$-representations of $N_pZ$ and $N_qZ$ are isomorphic, since $Z$ is connected. These representations are given by $\{\epsilon_p^N \cdot w_p^1, w_p^2, \cdots, w_p^m\} \mod w$ and $\{\epsilon_q^N \cdot w_q^1, w_q^2, \cdots, w_q^m\} \mod w$. Since they are isomorphic, there exist a permutation $\sigma \in S_m$ and $\epsilon \in \{-1,1\}^m$ such that $w_p^i \equiv \epsilon(i) w_q^{\sigma(i)} \mod w$ for each $i$. Moreover, $\epsilon_p^N=\epsilon_q^N \cdot (-1)^{\epsilon_{p,q}^-}$. \end{proof}

We explain Lemma \ref{l31} in an example.

\begin{exa} Let $S^1$ act on $\mathbb{CP}^4$ by
\begin{center}
$g \cdot [z_0:z_1:z_2:z_3:z_4]=[z_0:g z_1:g^2 z_2:g^3 z_3:g^4 z_4]$,
\end{center}
where $\mathbb{CP}^4$ is equipped with the standard orientation from the standard complex structure. The action has 5 fixed points, $p_0=[1:0:0:0:0]$, $p_1=[0:1:0:0:0]$, $p_2=[0:0:1:0:0]$, $p_3=[0:0:0:1:0]$, and $p_4=[0:0:0:0:1]$. Near $p_0$, $z_0 \neq 0$ and hence $\displaystyle \frac{z_i}{z_0}$ with $1 \leq i \leq 4$ are local coordinates near $p_0$. Therefore, the local action of $S^1$ near $p_0$ is given by
\begin{center}
$\displaystyle g \cdot (\frac{z_1}{z_0},\frac{z_2}{z_0},\frac{z_3}{z_0},\frac{z_4}{z_0})=(\frac{g \cdot z_1}{z_0},\frac{g^2 \cdot z_2}{z_0},\frac{g^3 \cdot z_3}{z_0},\frac{g^4 \cdot z_4}{z_0})=(g\cdot\frac{z_1}{z_0},g^2 \cdot  \frac{z_2}{z_0},g^3 \cdot \frac{z_3}{z_0},g^4 \cdot \frac{z_4}{z_0})$.
\end{center}
Therefore, the weights at $p_0$ as complex $S^1$-representations are $\{1,2,3,4\}$. Since all the weights are positive, as real $S^1$-representations, $\epsilon(p_0)=+1$ and the fixed point data at $p_0$ is $\{+,1,2,3,4\}$. Similarly, the weights at $p_3$ as complex $S^1$-representations are $\{-3,-2,-1,1\}$. Since there are 3 negative weights, as real $S^1$-representations, $\epsilon(p_3)=-1$ and the fixed point data at $p_3$ is $\{-,1,1,2,3\}$.

Now, the group $\mathbb{Z}_3$ acts on $\mathbb{CP}^4$ and one of the connected component $Z$ of the set $M^{\mathbb{Z}_3}$ of points that are fixed by the $\mathbb{Z}_3$-action is $\mathbb{CP}^1=S^2$, that contains $p_0$ and $p_3$. Equip $Z$ with the standard orientation from $\mathbb{CP}^4$. Since the weight of $T_{p_0}Z$ ($T_{p_3}Z$) as a complex representation is 3 (-3), we have $\epsilon_{p_0}^Z=+1$ ($\epsilon_{p_3}^Z=-1$, respectively). Since $\epsilon(p_0)=+1$ ($\epsilon(p_3)=-1$) and $\epsilon_{p_0}^Z=+1$ ($\epsilon_{p_3}^Z=-1$), and $\epsilon(p_0)=\epsilon_{p_0}^Z \cdot \epsilon_{p_0}^N$ ($\epsilon(p_3)=\epsilon_{p_3}^Z \cdot \epsilon_{p_3}^N$), we have that $\epsilon_{p_0}^N=+1$ ($\epsilon_{p_3}^N=+1$, respectively). Alternatively, since the weights at $N_{p_0}Z$ ($N_{p_3}Z$) as the complex representations are $\{1,2,4\}$ ($\{-2,-1,1\}$), we have $\epsilon_{p_0}^N=+1$ ($\epsilon_{p_3}^N=+1$, respectively). To sum up, we have

\begin{enumerate}
\item $\Sigma_{p_0}=\{+,1,2,3,4\}$, $\epsilon_{p_0}^Z=+1$, $\epsilon_{p_0}^N=+1$, and the weights of $N_{p_0}Z$ are $\{1,2,4\}$.
\item $\Sigma_{p_3}=\{-,1,1,2,3\}$, $\epsilon_{p_3}^Z=-1$, $\epsilon_{p_3}^N=+1$, and the weights of $N_{p_3}Z$ are $\{1,1,2\}$.
\end{enumerate}

The weights $\{1,2,4\}$ of $N_{p_0}Z$ and the weights $\{1,1,2\}$ of $N_{p_3}Z$ are equal modulo 3 up to sign;
\begin{center}
$1 \equiv 1 \mod 3, 2 \equiv -1 \mod 3$, and $4 \equiv -2 \mod 3$.
\end{center}
For the last two pairs, the weight at $p_0$ with plus sign is paired with the weight at $p_3$ with negative sign. Therefore, the terminology $\epsilon_{p_0,p_3}^-$ in Lemma \ref{l31} is equal to 2. By Lemma \ref{l31}, we must have that $1=\epsilon_{p_0}^N=\epsilon_{p_3}^N \cdot (-1)^{\epsilon_{p_0,p_3}^-}=1 \cdot (-1)^2=1$ and this computation confirms Lemma \ref{l31} in this example. \end{exa}

We shall discuss applications of Lemma \ref{l31}.

\begin{lemma} \label{l32}
Let the circle act on a compact oriented manifold $M$ with a discrete fixed point set. Let $w$ be a positive integer. Suppose that no multiples of $w$ occur as weights, other than $w$ itself. Then for any connected component $Z$ of $M^{\mathbb{Z}_w}$, the number of $S^1$-fixed points $p$ in $Z$ with $\epsilon_p^Z=+1$ and that with $\epsilon_p^Z=-1$ are equal. Moreover, we can pair points in $Z\cap M^{S^1}$ such that
\begin{enumerate}
\item If $(p,q)$ is a pair, then $\epsilon_p^Z=-\epsilon_q^Z$.
\item If $\{w_p^1,\cdots,w_p^m\}$ and $\{w_q^1,\cdots,w_q^m\}$ are the weights of $N_pZ$ and $N_qZ$, then there exist a bijection $\sigma_m:\{1,\cdots,m\}\rightarrow\{1,\cdots,m\}$ and $\epsilon \in \{-1,1\}^m$ such that $w_p^i \equiv \epsilon(i) w_q^{\sigma_m(i)} \mod w$ for any $1 \leq i \leq m$. Moreover, $\epsilon_p^M=\epsilon_q^M \cdot (-1)^{\epsilon_{p,q}^-+1}$, where $\epsilon_{p,q}^-$ is the number of $i$'s with $\epsilon(i)=-1$.
\end{enumerate}
\end{lemma}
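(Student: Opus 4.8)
My plan is to transfer the entire question to the component $Z$ itself, where the no-multiples hypothesis makes the restricted circle action extremely rigid, and then to read off both assertions from Theorem \ref{t29} and Lemma \ref{l31}.

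First I would set up the restricted action. Since $S^1$ is connected and commutes with $\mathbb{Z}_w$, it preserves $M^{\mathbb{Z}_w}$ and sends each connected component to itself, so $Z$ is $S^1$-invariant; being closed in the compact manifold $M$, it is compact. We may assume $Z\cap M^{S^1}\ne\emptyset$, since otherwise there is nothing to prove, and then $Z$ is orientable by Lemma \ref{l25}; fix an orientation of $Z$. The fixed set of the restricted action is $Z\cap M^{S^1}$, and at such a point $p$ the weights of the restricted action are precisely the weights of $T_pZ$, namely those $w_p^i$ divisible by $w$, since $T_pZ$ is spanned by the summands $L_i$ on which $\mathbb{Z}_w$ acts trivially. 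The crucial observation is now immediate: by hypothesis the only multiple of $w$ occurring as a weight of $M$ is $w$ itself, so \emph{every} weight of the restricted $S^1$-action on $Z$ equals $w$. (I take $Z$ to be positive-dimensional, equivalently $w$ to occur as a tangential weight; a zero-dimensional $Z$ carries no content.)

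Thus $Z$, with its fixed orientation, is a compact oriented manifold carrying a circle action all of whose fixed-point weights are $\{w,\dots,w\}$, and whose fixed-point sign at $p$ is, by Definition \ref{d31}, exactly $\epsilon_p^Z$. Applying Theorem \ref{t29} to this action shows that the number of $p\in Z\cap M^{S^1}$ with $\epsilon_p^Z=+1$ equals the number with $\epsilon_p^Z=-1$, which is the first assertion. With the counts equal I then pair the points of $Z\cap M^{S^1}$, matching each sign-$(+1)$ point to a distinct sign-$(-1)$ point; this is exactly condition (1), namely $\epsilon_p^Z=-\epsilon_q^Z$ for every pair.

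It remains to verify condition (2), and here the work is already done by Lemma \ref{l31}. For \emph{any} two fixed points $p,q\in Z\cap M^{S^1}$, Lemma \ref{l31} (applied to $Z$ as the component of $M^{\mathbb{Z}_w}$) supplies a bijection $\sigma_m$ and signs $\epsilon\in\{-1,1\}^m$ with $w_p^i\equiv\epsilon(i)w_q^{\sigma_m(i)}\bmod w$, together with the normal-sign relation $\epsilon_p^N=\epsilon_q^N\cdot(-1)^{\epsilon_{p,q}^-}$; in particular the weight-matching part of (2) imposes no constraint on how the pairing is chosen. For the final identity I would combine $\epsilon_p^M=\epsilon_p^Z\cdot\epsilon_p^N$ and $\epsilon_q^M=\epsilon_q^Z\cdot\epsilon_q^N$ (the remark following Definition \ref{d31}) with $\epsilon_p^Z=-\epsilon_q^Z$ from our matching:
\[
\epsilon_p^M=\epsilon_p^Z\,\epsilon_p^N=(-\epsilon_q^Z)\bigl(\epsilon_q^N(-1)^{\epsilon_{p,q}^-}\bigr)=\epsilon_q^M\cdot(-1)^{\epsilon_{p,q}^-+1},
\]
which is the required relation. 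I expect no serious obstacle: the whole content is the observation that the no-multiples hypothesis collapses every tangential weight of $Z$ to $w$, after which Theorem \ref{t29} gives the balance of signs and the remaining sign bookkeeping is formal. The single point demanding care is confirming that the fixed-point sign of the restricted action on $Z$ genuinely coincides with the $\epsilon_p^Z$ of Definition \ref{d31}, so that Theorem \ref{t29} controls exactly the quantity appearing in the statement.
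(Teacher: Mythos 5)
Your proposal is correct and takes essentially the same approach as the paper: restrict to $Z$, observe that the no-multiples hypothesis forces every tangential weight of the action on $Z$ to be determined, apply Theorem \ref{t29} to equalize the counts of $\epsilon_p^Z=\pm 1$, pair the points accordingly, and then combine Lemma \ref{l31} with $\epsilon_p^M=\epsilon_p^Z\cdot\epsilon_p^N$ to get part (2). The only (immaterial) difference is that the paper applies Theorem \ref{t29} to the induced action of $S^1/\mathbb{Z}_w\cong S^1$ on $Z$, where all weights become $1$, whereas you apply it directly to the restricted $S^1$-action on $Z$ with all weights equal to $w$.
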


\begin{proof} Consider the induced action of $S^1/\mathbb{Z}_w=S^1$ on $Z$. Since no multiples of $w$ occur as weights, every weight on the tangent space $T_pZ$ of $Z$ at a point $p$ that is fixed by the induced $S^1$-action is $1$. Apply Theorem \ref{t29} by taking $a_i=1$ to the induced $S^1$-action on $Z$. It follows that the number of fixed points $p$ with $\epsilon_p^Z=+1$ and that with $\epsilon_p^Z=-1$ are equal. Therefore, we can pair points in $Z\cap M^{S^1}$ so that if $(p,q)$ is a pair, then $\epsilon_p^Z=-\epsilon_q^Z$. By Lemma \ref{l31}, $\epsilon_p^N=\epsilon_q^N \cdot (-1)^{\epsilon_{p,q}^-}$. With $\epsilon_p^M=\epsilon_p^Z \cdot \epsilon_p^N$ and $\epsilon_q^M=\epsilon_q^Z \cdot \epsilon_q^N$, the lemma follows from Lemma \ref{l31}. \end{proof}

\begin{lemma} \label{l33}
Let the circle act effectively on a 4-dimensional compact oriented manifold $M$ with a discrete fixed point set. Suppose that a fixed point $p$ has weights $\{a,w\}$, where $w>1$. Then there exists a unique fixed point $q$ such that $\{p,q\}\subset S^2 \subset M^{\mathbb{Z}_w}$. Let $b$ be the remaining weight at $q$.
\begin{enumerate}
\item If $\epsilon(p)=\epsilon(q)$, then $a \equiv -b \mod w$.
\item If $\epsilon(p)=-\epsilon(q)$, then $a \equiv b \mod w$.
\end{enumerate}
\end{lemma}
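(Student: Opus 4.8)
The plan is to identify the connected component $Z$ of $M^{\mathbb{Z}_w}$ containing $p$, show that it is a $2$-sphere carrying exactly the two fixed points $p$ and $q$, and then feed $p$ and $q$ into Lemma \ref{l31} while converting the resulting normal-bundle signs into the global signs $\epsilon(p),\epsilon(q)$ by means of Theorem \ref{t28}.

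First I would pin down $Z$. Since the $S^1$-action is effective, the two weights $a,w$ at $p$ are relatively prime (otherwise $\mathbb{Z}_{\gcd(a,w)}$ would act trivially on a neighborhood of $p$, hence on the component of $p$, contradicting effectiveness); in particular $w \nmid a$ because $w>1$. Consequently, among the two summands of $T_pM$ exactly one—the weight-$w$ summand—is fixed by $\mathbb{Z}_w$, so $Z$ is $2$-dimensional. By Lemma \ref{l25}, $Z$ is orientable (it contains the $S^1$-fixed point $p$), and the induced action of $S^1/\mathbb{Z}_w\cong S^1$ on $Z$ is nontrivial, since the $S^1$-weight $w$ on $T_pZ$ descends to the weight $w/w=1\neq 0$, making $p$ an isolated fixed point of the induced action. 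By Lemma \ref{l212}, a compact orientable surface carrying a nontrivial circle action with a fixed point must be $S^2$ with exactly two fixed points. Hence $Z\cong S^2$ and there is a unique second fixed point $q\in Z$, which gives the asserted existence and uniqueness. Applying Theorem \ref{t28} to the induced action on $Z$ shows its two weights are equal, so the induced weight at $q$ is also $1$ and the weight of $T_qZ$ is $w$; therefore $N_pZ$ carries the weight $a$ and $N_qZ$ carries the remaining weight $b$.

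Next I would apply Lemma \ref{l31} with $n=2$ and $m=1$. It produces a (trivial) permutation together with a sign $\epsilon(1)\in\{-1,1\}$ such that $a\equiv \epsilon(1)\,b \pmod w$, and moreover $\epsilon_p^N=\epsilon_q^N\cdot(-1)^{\epsilon_{p,q}^-}$, where $\epsilon_{p,q}^-=0$ when $\epsilon(1)=+1$ and $\epsilon_{p,q}^-=1$ when $\epsilon(1)=-1$. Thus $a\equiv b \pmod w$ corresponds exactly to $\epsilon_p^N=\epsilon_q^N$, while $a\equiv -b \pmod w$ corresponds to $\epsilon_p^N=-\epsilon_q^N$. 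Finally I would convert the normal signs into $\epsilon(p),\epsilon(q)$: recall from Definition \ref{d31} that $\epsilon(p)=\epsilon_p^Z\cdot\epsilon_p^N$ and $\epsilon(q)=\epsilon_q^Z\cdot\epsilon_q^N$, and that Theorem \ref{t28} applied to the two-fixed-point action on the oriented surface $Z$ gives $\epsilon_p^Z=-\epsilon_q^Z$. Multiplying, $\epsilon(p)\,\epsilon(q)=(\epsilon_p^Z\epsilon_q^Z)(\epsilon_p^N\epsilon_q^N)=(-1)\cdot(-1)^{\epsilon_{p,q}^-}=(-1)^{\epsilon_{p,q}^-+1}$. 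Hence $\epsilon(p)=\epsilon(q)$ forces $\epsilon_{p,q}^-=1$, i.e.\ $\epsilon(1)=-1$ and $a\equiv -b \pmod w$, proving $(1)$; and $\epsilon(p)=-\epsilon(q)$ forces $\epsilon_{p,q}^-=0$, i.e.\ $\epsilon(1)=+1$ and $a\equiv b \pmod w$, proving $(2)$.

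The main obstacle is the first step: establishing rigorously that $Z$ is genuinely a $2$-sphere carrying precisely the two fixed points $p,q$ (the dimension count via effectiveness, orientability via Lemma \ref{l25}, and the surface classification via Lemma \ref{l212}), and then bookkeeping the three orientation signs $\epsilon^Z$, $\epsilon^N$, $\epsilon^M$ so that Lemma \ref{l31} and Theorem \ref{t28} combine into the stated congruences. Note that, in contrast to Lemma \ref{l32}, this argument does not require the hypothesis that no multiple of $w$ occurs as a weight, precisely because in dimension $4$ the component $Z$ is automatically $S^2$. The remaining algebra in the case $m=1$ is routine.
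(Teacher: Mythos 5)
Your proof is correct and follows essentially the same route as the paper: effectiveness forces $\dim Z=2$, Lemma \ref{l212} gives $Z=S^2$ with exactly two fixed points, Theorem \ref{t28} applied to the induced action gives $\epsilon_p^Z=-\epsilon_q^Z$, and the congruences come from the sign bookkeeping of Lemma \ref{l31} combined with $\epsilon^M=\epsilon^Z\cdot\epsilon^N$. If anything, your direct appeal to Lemma \ref{l31} is slightly cleaner than the paper's citation of Lemma \ref{l32}, whose blanket hypothesis (no weight is a proper multiple of $w$) need not hold here; the paper in effect uses only the part of Lemma \ref{l32} that amounts to Lemma \ref{l31} together with $\epsilon_p^Z=-\epsilon_q^Z$, which is exactly what you do.
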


\begin{proof} Since $w>1$ and the action is effective, if $Z$ is a connected component of $M^{\mathbb{Z}_w}$ that contains $p$, then $\dim Z=2$. The induced action of $S^1/\mathbb{Z}_w=S^1$ on $Z$ has $p$ as a fixed point. By Lemma \ref{l212}, it follows that $Z$ is the 2-sphere $S^2$ and it contains precisely two fixed points $p,q$. By applying Theorem \ref{t28} to the induced $S^1$-action on $Z$, we have $\epsilon_p^Z=-\epsilon_q^Z$. 

First, suppose that $\epsilon(p)=\epsilon(q)$. Then by Lemma \ref{l32}, since $\epsilon_p^M=\epsilon_q^M \cdot (-1)^{\epsilon_{p,q}^-+1}$, we have that $\epsilon_{p,q}^-=1$. Since $a$ and $b$ are the weights of the normal bundle of $Z$ at $p$ and $q$, respectively, by Lemma \ref{l32}, we have that $a \equiv -b \mod w$.

Second, suppose that $\epsilon(p)=-\epsilon(q)$. Then by Lemma \ref{l32}, since $\epsilon_p^M=\epsilon_q^M \cdot (-1)^{\epsilon_{p,q}^-+1}$, we have that $\epsilon_{p,q}^-=0$. Therefore, we have that $a \equiv b \mod w$. \end{proof}

Let us consider the biggest weight among the weights over all the fixed points. If it is strictly bigger than 1, then Lemma \ref{l33} has the following consequence that we will use in the proof of Theorem \ref{t11} and Theorem \ref{t12} in Section \ref{s6}.

\begin{lemma} \label{l34}
Let the circle act effectively on a 4-dimensional compact oriented manifold $M$ with a discrete fixed point set. Assume that the biggest weight $w$ is bigger than 1. If a fixed point $p$ has weight $w$, then there exists a unique fixed point $q$ such that $\{p,q\} \subset S^2 \subset M^{\mathbb{Z}_w}$. Moreover, if $a$ and $b$ are the remaining weights at $p$ and $q$, respectively, then the following holds:
\begin{enumerate}
\item If $\epsilon(p)=\epsilon(q)$, then $a+b=w$.
\item If $\epsilon(p)=-\epsilon(q)$, then $a=b$.
\end{enumerate}
\end{lemma}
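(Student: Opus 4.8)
The plan is to deduce Lemma \ref{l34} from Lemma \ref{l33} by promoting the congruences modulo $w$ to honest equalities, using the hypothesis that $w$ is the largest weight occurring among the fixed points.

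First I would record the weight structure. Since $\dim M=4$, the fixed point $p$ carries exactly two weights; one of them is $w$, and I name the other one $a$, so that $p$ has weights $\{a,w\}$. Because $w>1$, Lemma \ref{l33} applies directly and produces the unique fixed point $q$ together with the two-sphere $Z:=S^2\subset M^{\mathbb{Z}_w}$ containing $\{p,q\}$, the remaining weight $b$ at $q$, and the two congruences: $a\equiv -b \pmod w$ when $\epsilon(p)=\epsilon(q)$, and $a\equiv b \pmod w$ when $\epsilon(p)=-\epsilon(q)$. The uniqueness of $q$ and the existence of the invariant $S^2$ are thus immediate, and only the sharper relations $a+b=w$ and $a=b$ remain to be proved.

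Next I would pin down the sizes of $a$ and $b$. The sphere $Z$ is a connected component of $M^{\mathbb{Z}_w}$, so $T_pZ$ is spanned by those $L_i$ whose weight is divisible by $w$, while $N_pZ$ is spanned by those whose weight is not; this is exactly the identification of $a$ as the normal weight already used in the proof of Lemma \ref{l33}. In particular the weight on the two-dimensional $T_pZ$ is a nonzero multiple of $w$ which, by maximality of $w$, equals $w$, and the normal weight $a$ satisfies $w\nmid a$. Combining $w\nmid a$ with $1\le a\le w$ forces $1\le a\le w-1$, and the identical reasoning at $q$ gives $1\le b\le w-1$.

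Finally I would finish by elementary arithmetic in each case. If $\epsilon(p)=\epsilon(q)$, then $w\mid(a+b)$ while $2\le a+b\le 2w-2$; the only multiple of $w$ in this range is $w$ itself, so $a+b=w$. If $\epsilon(p)=-\epsilon(q)$, then $w\mid(a-b)$ while $|a-b|\le w-2<w$, which forces $a-b=0$, i.e.\ $a=b$. The only genuinely substantive point is the bound $a,b\le w-1$: once the maximality of $w$ is used to see that the normal weights to $Z$ can neither be divisible by $w$ nor exceed $w$, both equalities drop out of the congruences. I expect no serious obstacle here, since the heavy lifting—the existence of $q$, the reduction to $\dim Z=2$, and the mod-$w$ relations—is already carried by Lemma \ref{l33}.
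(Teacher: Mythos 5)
Your proposal is correct and follows essentially the same route as the paper: the paper's own proof simply notes that effectiveness and the maximality of $w>1$ force $a<w$ and $b<w$, and then invokes Lemma \ref{l33} to turn the congruences into the equalities $a+b=w$ and $a=b$. Your write-up merely makes explicit the details (the normal weight to $Z$ is not divisible by $w$, hence $1\le a,b\le w-1$, and the arithmetic in each case) that the paper leaves implicit.
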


\begin{proof} Since the action is effective and $w>1$ is the biggest weight, $a<w$ and $b<w$. Therefore, by Lemma \ref{l33}, the lemma follows. \end{proof}

\section{Multigraphs} \label{s4}

In this section, we associate a multigraph to a compact oriented manifold equipped with a circle action having a discrete fixed point set. In particular, we assign one with the following properties: each fixed point is a vertex, each vertex has exactly $n$-edges where $2n$ is the dimension of the manifold, each edge is indexed by a positive integer such that the labels of the edges at a vertex are the weights at the fixed point,  each vertex has a sign, and there is no self-loop. If a fixed point $p$ has weight $w$, then there exists another fixed point $q$ that has weight $w$. Therefore, we can draw an edge $e$ between $p$ and $q$ and assign a label $w$ to the edge $e$.

\begin{Definition} \label{d41} A \textbf{multigraph} $\Gamma$ is an ordered pair $\Gamma=(V,E)$ where $V$ is a set of vertices and $E$ is a multiset of unordered pairs of vertices, called \textbf{edges}. A multigraph is called \textbf{signed} if every vertex has sign $+$ or $-$. A multigraph is called \textbf{labelled}, if every edge $e$ is labelled by a positive integer $w(e)$, called the \textbf{label}, or the \textbf{weight} of the edge, i.e., there exists a map from $E$ to the set of positive integers. Let $\Gamma$ be a labelled multigraph. The \textbf{weights} at a vertex $v$ consists of labels (weights) $w(e)$ for each edge $e$ at $v$. A multigraph $\Gamma$ is called \textbf{$n$-regular}, if every vertex has $n$-edges. \end{Definition}

The following proposition shows that given a circle action on a compact oriented manifold with a discrete fixed point set, we can assign a signed, labelled multigraph that does not have any loops.

\begin{pro} \label{p42}
Let the circle act effectively on a $2n$-dimensional compact oriented manifold $M$ with a discrete fixed point set. Then there exists a signed, labelled, $n$-regular multigraph $\Gamma$ associated to $M$ with the following properties.
\begin{enumerate}
\item The set $V$ of vertices is the set $M^{S^1}$ of the fixed points.
\item The labels of the edges at a vertex $p$ are the weights at the corresponding fixed point $p$.
\item The sign $\epsilon(p)$ of a vertex $p$ is the sign $\epsilon(p)$ of the corresponding fixed point $p$.
\item If there is an edge $e$ between two vertices $p_1$ and $p_2$ with label $w$, then the corresponding fixed points $p_1$ and $p_2$ lie in the same connected component $Z$ of $M^{\mathbb{Z}_w}$.
\item The multigraph $\Gamma$ does not have any loops.
\end{enumerate} \end{pro}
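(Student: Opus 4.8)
The plan is to build $\Gamma$ one weight value at a time, pairing the occurrences of each weight inside the isotropy submanifolds $M^{\mathbb{Z}_w}$ and using a sign argument to exclude loops. I would set the vertex set to be $V=M^{S^1}$ and assign to each vertex $p$ the sign $\epsilon(p)$; properties (1) and (3) then hold by construction, and once the edges are defined with labels equal to weights, property (2) and the $n$-regularity will follow as soon as each of the $n$ weight-occurrences (counted with multiplicity) at each fixed point is used by exactly one edge.

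For a fixed weight value $w$ occurring in the fixed point data, every occurrence of $w$ sits at an $S^1$-fixed point $p$, and $p$ lies in a connected component $Z$ of $M^{\mathbb{Z}_w}$ that contains the $S^1$-fixed point $p$; by Lemma \ref{l25}, $Z$ is orientable, and I would fix an orientation. The summand $L_i$ at $p$ is tangent to $Z$ precisely when $\mathbb{Z}_w$ acts trivially on it, i.e. when $w \mid w_p^i$; in particular every weight-$w$ direction is tangent to $Z$, and all weights of the restricted $S^1$-action on $Z$ are multiples of $w$. Consequently, whenever $w$ appears on $Z$ it is the smallest weight of the restricted action, so applying Lemma \ref{l22} to the $S^1$-action on the oriented manifold $Z$ shows that the weight-$w$ occurrences at fixed points with $\epsilon_p^Z=+1$ and those at fixed points with $\epsilon_p^Z=-1$ are equinumerous, say $k$ each.

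The key step is then the pairing: within $Z$ I would match each of the $k$ weight-$w$ occurrences at a vertex with $\epsilon_p^Z=+1$ to one at a vertex with $\epsilon_p^Z=-1$ by any bijection, and draw an edge of label $w$ for each matched pair. Since a vertex carries a single value of $\epsilon^Z$, the two endpoints of every edge have opposite $\epsilon^Z$-signs and are therefore distinct, which is exactly what yields property (5); both endpoints lie in $Z \subset M^{\mathbb{Z}_w}$, giving property (4). Performing this for every weight value $w$ and every relevant component $Z$ uses each weight-occurrence at each vertex exactly once, so each vertex acquires $n$ edges and $\Gamma$ is $n$-regular, completing (1)--(3).

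The hard part is the no-loop condition (5): a naive matching of equal weights could join a fixed point to itself whenever a single fixed point has a repeated weight. The remedy is precisely the sign-balance furnished by Lemma \ref{l22}, pairing $\epsilon^Z=+1$ occurrences with $\epsilon^Z=-1$ occurrences; the point that makes this balance available for every weight value, and not merely the globally smallest one, is the observation that $w$ is the smallest weight of the restricted action on each component of $M^{\mathbb{Z}_w}$ on which it occurs, so that Lemma \ref{l22} applies there. I would also record the routine compatibility checks: that $S^1$ preserves each component $Z$, that the $S^1$-fixed points of $Z$ are exactly $Z \cap M^{S^1}$ and discrete, and that the weight-$w$ occurrences of the restricted action on $Z$ coincide with the weight-$w$ occurrences of $M$ lying in $Z$.
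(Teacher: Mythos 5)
Your proposal is correct and follows essentially the same route as the paper's own proof: restrict the action to an oriented component $Z$ of $M^{\mathbb{Z}_w}$ (orientable by Lemma \ref{l25}), observe that $w$ is the smallest weight of the restricted action there, invoke the sign-balance statement to match weight-$w$ occurrences at $\epsilon^Z=+1$ points with those at $\epsilon^Z=-1$ points, and conclude no loops since the endpoints of each edge carry opposite $\epsilon^Z$. Your only (harmless) deviation is citing Lemma \ref{l22} directly where the paper cites Lemma \ref{l26}; since the paper's proof of Lemma \ref{l26} is precisely this application of Lemma \ref{l22} to $Z$, your citation is if anything the cleaner one.
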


\begin{proof} Assign each fixed point $p$ a vertex and also denote it by $p$. To each vertex $p$, assign a sign $\epsilon(p)$. These prove (1) and (3). Let $w$ be a positive integer and $Z$ a connected component of $M^{\mathbb{Z}_w}$, the set of points in $M$ that are fixed by the $\mathbb{Z}_w$-action. Assume that $Z$ contains an $S^1$-fixed point $p$. Then by Lemma \ref{l25}, $Z$ is orientable. Pick an orientation on $Z$. The $S^1$-action on $M$ restricts to an $S^1$-action on $Z$. Moreover, the smallest weight of the $S^1$-action on $Z$ is $w$. Applying Lemma \ref{l26} to the $S^1$-action on $Z$, it follows that the number of times the weight $w$ occurs at points that are fixed by the $S^1$-action on $Z$ with $\epsilon_p^Z=+1$ and that with $\epsilon_p^Z=-1$ are equal. Therefore, by this recipe, if a fixed point $p_1$ in $Z$ with $\epsilon_{p_1}^Z=+1$ has weight $w$ and a fixed point $p_2$ in $Z$ with $\epsilon_{p_2}^Z=-1$ has weight $w$, then we can draw an edge $e$ between the vertices $p_1$ and $p_2$, and issue a label $w$ to $e$. At each fixed point, we use one weight to draw only one edge. Repeat this for each positive integer $w$ and each connected component of the set $M^{\mathbb{Z}_w}$. This proves (2), (4), and (5), and that the multigraph is $n$-regular, hence the lemma follows. \end{proof}

\section{Blow-up} \label{s5}

Another key ingredient to prove Theorem \ref{t11} and Theorem \ref{t12} is a blow-up type operation. Blowing up the origin 0 in $\mathbb{C}^n$ is an operation that replaces the point 0 by the set of all straight complex lines through it. In this section, we shall introduce a blow-up type operation for an isolated fixed point of a circle action on a 4-dimensional oriented manifold by identifying a neighborhood of the fixed point with a neighborhood of the origin $0$ in $\mathbb{C}^2$. For this, let the circle act on a 4-dimensional oriented manifold $M$ and let $p$ be an isolated fixed point. Then we can identify a neighborhood $U$ of $p$ with a neighborhood $V$ of $0$ in $\mathbb{C}^2$, where the local action of $S^1$ near $p$ can be identified with a circle action near $0$ in $\mathbb{C}^2$ by
 \begin{center}
$g \cdot (z_1,z_2)=(g^{-a}z_1,g^b z_2)$,
\end{center}
for some positive integers $a$ and $b$ that are weights at $p$. We can choose the signs by reversing the orientation of $M$ if necessary. Now let us blow up $0$ in $\mathbb{C}^2$ in the usual sense. Since the neighborhood $U$ of $p$ is identified with the neighborhood $V$ of $0$ in $\mathbb{C}^2$, we make the corresponding change on $U$ of $p$. We shall call this procedure \textbf{blow up}, as what it does geometrically is the same as blow up in complex geometry. We blow up equivariantly so that there is a natural extended circle action on the blown up manifold. In this case, the equivariant blow up replaces the fixed point $p$ by $\mathbb{CP}^1=S^2$, and the extended $S^1$-action on the $\mathbb{CP}^1$ has two fixed points $p_1$ and $p_2$, whose fixed point data are $\{\epsilon(p),a,a+b\}$ and $\{\epsilon(p),b,a+b\}$, respectively. In other words, from $M$, by the blow up at $p$, we can construct a new manifold $M'$ equipped with a circle action whose fixed point data is the same as $M$ with replacing $\{\epsilon(p),a,b\}$ by $\{\epsilon(p),a,a+b\}\cup\{\epsilon(p),b,a+b\}$. To state our technical lemma, we need the following theorem, that is an application of the equivariant tubular neighborhood theorem (the slice theorem) to a fixed point of a group action on a manifold.

\begin{theo} (The Local Linearization Theorem) \cite{GGK} \label{t51} Let a compact Lie group $G$ act on a manifold $M$ and let $p \in M^G$ be a fixed point. Then there exists a $G$-equivariant diffeomorphism from a neighborhood of the origin in $T_pM$ onto a neighborhood of $p$ in $M$. \end{theo}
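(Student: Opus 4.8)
The plan is to reduce the statement to the classical fact that a Riemannian isometry commutes with the exponential map, after first manufacturing a $G$-invariant metric out of compactness.

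First I would average a metric. Pick any Riemannian metric $h$ on $M$ and define $\langle u,v\rangle_q=\int_G h_{g\cdot q}(dg\cdot u,\,dg\cdot v)\,d\mu(g)$, where $d\mu$ is the normalized Haar measure on $G$. Because $G$ is compact the integral converges and yields a smooth $G$-invariant Riemannian metric, and with respect to it every $g\in G$ acts on $M$ as an isometry. This averaging is precisely where compactness of $G$ is used.

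Next I would invoke the Riemannian exponential map $\exp_p\colon T_pM\to M$ of this invariant metric at the fixed point $p$. By the standard theory of geodesics there is an open star-shaped neighborhood $\mathcal U$ of $0\in T_pM$ on which $\exp_p$ is a diffeomorphism onto an open neighborhood of $p$. The key geometric input is that an isometry carries geodesics to geodesics, so for small $v$ one has $g\bigl(\exp_p(v)\bigr)=\exp_{g\cdot p}\!\bigl(d_pg\cdot v\bigr)$; since $p$ is fixed, $g\cdot p=p$, and this reads $g\bigl(\exp_p(v)\bigr)=\exp_p\!\bigl(d_pg\cdot v\bigr)$. Thus $\exp_p$ intertwines the linear isotropy action $v\mapsto d_pg\cdot v$ on $T_pM$ with the given $G$-action on $M$.

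Finally I would arrange the domain to be $G$-invariant. Each $d_pg$ is a linear isometry of $(T_pM,\langle\cdot,\cdot\rangle_p)$, hence orthogonal, so every metric ball $B_\varepsilon=\{v\in T_pM:\lvert v\rvert<\varepsilon\}$ is $G$-invariant. Choosing $\varepsilon$ small enough that $B_\varepsilon\subset\mathcal U$, the restriction $\exp_p|_{B_\varepsilon}$ is the desired $G$-equivariant diffeomorphism from a neighborhood of the origin in $T_pM$ onto a neighborhood of $p$ in $M$. Equivalently, this is the special case of the slice theorem in which the orbit $G\cdot p$ is the single point $p$ and the isotropy group is all of $G$. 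I expect the only real obstacle to be verifying the equivariance identity $g\circ\exp_p=\exp_p\circ\,d_pg$; the averaging step and the selection of an invariant ball are routine once compactness has been exploited.
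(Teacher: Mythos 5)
Your proposal is correct. Note that the paper does not prove this statement at all: it is quoted as a known result from the cited reference [GGK] (where it appears as part of the equivariant tubular neighborhood/slice theorem machinery), and the paper itself frames it as the slice theorem applied to a fixed point, exactly as you remark at the end. Your argument---average a metric over Haar measure to make $G$ act by isometries, use $g\circ\exp_p=\exp_p\circ\, d_pg$ (isometries carry geodesics to geodesics and fix $p$), and restrict to a small ball $B_\varepsilon\subset T_pM$, which is invariant because each $d_pg$ is orthogonal---is the standard Bochner-type linearization proof and is complete; the only steps you flag as needing verification (smoothness of the averaged metric, the intertwining identity) are indeed routine and hold as stated.
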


With Theorem \ref{t51}, we are ready to state our technical lemma, that allows us to blow up a fixed point of an $S^1$-action.

\begin{lemma} \label{l51}
Let the circle act on a 4-dimensional compact connected oriented manifold $M$ with a discrete fixed point set.
\begin{enumerate}[(1)]
\item Suppose that a fixed point $p$ has fixed point data $\{-,a,b\}$ for some positive integers $a$ and $b$. Then we can construct a 4-dimensional compact connected oriented manifold $\widetilde{M}$ equipped with a circle action such that the fixed point data of $\widetilde{M}$ is $(\Sigma_M \setminus \{-,a,b\}) \cup \{-,a,a+b\} \cup \{-,b,a+b\}$.
\item Suppose that a fixed point $p$ has fixed point data $\{+,a,b\}$ for some positive integers $a$ and $b$. Then we can construct a 4-dimensional compact connected oriented manifold $\widetilde{M}$ equipped with a circle action such that the fixed point data of $\widetilde{M}$ is $(\Sigma_M \setminus \{+,a,b\}) \cup \{+,a,a+b\} \cup \{+,b,a+b\}$.
\end{enumerate} 
\end{lemma}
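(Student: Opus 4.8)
The plan is to make rigorous the blow-up construction sketched in the paragraph preceding the lemma, treating part (1) directly and deducing part (2) by reversing orientation. For part (1), I would first invoke the Local Linearization Theorem (Theorem \ref{t51}) to identify an $S^1$-invariant neighborhood $U$ of the fixed point $p$ with a neighborhood $V$ of the origin in $\mathbb{C}^2$, choosing the identification so that the orientation of $M$ corresponds to the standard complex orientation of $\mathbb{C}^2$. Under this identification the local action takes the form $g \cdot (z_1, z_2) = (g^{-a} z_1, g^b z_2)$: the key point, to be verified in the orientation step below, is that the hypothesis $\epsilon(p) = -1$ is exactly what forces one complex weight to be negative and the other positive. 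I would then blow up the origin of $\mathbb{C}^2$ in the usual holomorphic sense, obtaining $\widetilde{V} \subset \mathbb{C}^2 \times \mathbb{CP}^1$ with exceptional divisor $E \cong \mathbb{CP}^1$, extend the action equivariantly by $g \cdot ((z_1,z_2),[\ell_1 : \ell_2]) = ((g^{-a}z_1, g^b z_2),[g^{-a}\ell_1 : g^b \ell_2])$, and glue $\widetilde{V}$ back to $M \setminus \{p\}$ along the biholomorphism $\widetilde{V}\setminus E \cong V \setminus \{0\} \cong U \setminus \{p\}$ to produce $\widetilde{M}$.

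Next, I would read off the new fixed point data from the two $S^1$-fixed points on $E$, namely $[1:0]$ and $[0:1]$. Working in the two standard affine charts of the blow-up, the action near $[1:0]$ is $g \cdot (z_1, u) = (g^{-a} z_1, g^{a+b} u)$ (with $u = \ell_2/\ell_1$) and near $[0:1]$ is $g \cdot (v, z_2) = (g^{-(a+b)} v, g^{b} z_2)$ (with $v = \ell_1/\ell_2$); hence the complex weights at the two new fixed points are $\{-a, a+b\}$ and $\{-(a+b), b\}$, giving real weights $\{a, a+b\}$ and $\{b, a+b\}$ as required. Every fixed point of $M$ other than $p$ is untouched, so the fixed point set of $\widetilde{M}$ is $(M^{S^1}\setminus\{p\}) \cup \{p_1, p_2\}$, which is still discrete; compactness, connectedness and orientability of $\widetilde{M}$ follow because the construction is a local modification by the connected, oriented, complex blow-up region, glued orientation-compatibly.

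The heart of the argument, and the step I expect to be the main obstacle, is the orientation bookkeeping that pins down the signs. Using the dictionary between complex weights and the sign $\epsilon$ from the Introduction — orient each complex line to make its weight positive, then compare with the ambient complex orientation — a single negative complex weight contributes exactly one orientation reversal. Thus the local model $g\cdot(z_1,z_2)=(g^{-a}z_1,g^bz_2)$ has one negative complex weight and so $\epsilon(p) = -1$, confirming this is the correct identification for part (1). The same count applied at $p_1$ (weights $\{-a, a+b\}$, one negative) and at $p_2$ (weights $\{-(a+b), b\}$, one negative) gives $\epsilon(p_1) = \epsilon(p_2) = -1$, so both new fixed points inherit the sign $-$. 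This yields precisely $(\Sigma_M \setminus \{-,a,b\}) \cup \{-,a,a+b\} \cup \{-,b,a+b\}$.

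Finally, for part (2) I would reduce to part (1) by reversing the orientation of $M$. If $p$ has fixed point data $\{+,a,b\}$, then in $\overline{M}$, the same manifold with reversed orientation, it has data $\{-,a,b\}$ while every sign in $\Sigma_M$ is flipped; applying part (1) to $\overline{M}$ and then reversing orientation once more flips every sign back, sending the two new vertices $\{-,a,a+b\}$ and $\{-,b,a+b\}$ to $\{+,a,a+b\}$ and $\{+,b,a+b\}$ and restoring all other signs. This is exactly the claimed fixed point data of $\widetilde{M}$, completing the proof.
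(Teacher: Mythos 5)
Your proposal is correct and follows essentially the same route as the paper's own proof: local linearization at $p$ to get the model $g\cdot(z_1,z_2)=(g^{-a}z_1,g^bz_2)$ on $\mathbb{C}^2$ (using $\epsilon(p)=-1$ to force exactly one negative complex weight), holomorphic equivariant blow-up glued back along $U\setminus\{p\}$, chart computations giving complex weights $\{-a,a+b\}$ and $\{b,-a-b\}$ hence signs $-1$ at both new fixed points, and part (2) by reversing orientation, applying part (1), and reversing back.
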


\begin{proof} Assume Case (1), i.e., $\epsilon(p)=-1$. By Theorem \ref{t51}, a neighborhood of $p$ in $M$ is $S^1$-equivariantly diffeomorphic to a neighborhood of $0$ in $T_pM$. The tangent space at $p$ decomposes into 2 two-dimensional irreducible $S^1$-equivariant real vector spaces $L_1$ and $L_2$, on each of which the action is given by multiplication by $g^a$ and $g^b$ for any $g \in S^1$, respectively. Since $\epsilon(p)=-1$, this implies that $T_pM$ is isomorphic to $\mathbb{C}^2$, on which each $g \in S^1 \subset \mathbb{C}$ acts on $\mathbb{C}^2$ by either $g \cdot (z_1,z_2)=(g^{-a}z_1,g^bz_2)$ or $g \cdot (z_1,z_2)=(g^{a}z_1,g^{-b}z_2)$. By changing the orientations of both copies of $\mathbb{C}$'s in the latter case, we may assume that a neighborhood $U$ of $p$ is equivariantly diffeomorphic to a neighborhood $V$ of $0$ in $\mathbb{C}^2$ on which each $g \in S^1 \subset \mathbb{C}$ acts by
\begin{center}
$g \cdot (z_1,z_2)=(g^{-a}z_1,g^bz_2)$.
\end{center}
Denote the orientation preserving equivariant diffeomorphism by $\phi$. Next, we blow up the origin $0$ in $\mathbb{C}^2$ that corresponds to $p$. Call the blown up space $\widetilde{V}$. The blow up replaces 0 in $V$ by the set of all straight lines through it. The blown up space $\widetilde{V}$ is described as
\begin{center}
$\widetilde{V}=\{(z,l)|z \in l\}\subset V \times \mathbb{CP}^1$.
\end{center}
The space can also be described by the equation
\begin{center}
$\widetilde{V}=\{((z_1,z_2),[w_1:w_2])|(z_1,z_2) \in V, w_1z_2-w_2z_1=0\}$.
\end{center}
With this description, the action of $S^1$ on $V$ extends to act on $\widetilde{V}$ by
\begin{center}
$g \cdot ((z_1,z_2),[w_1:w_2])=((g^{-a}z_1,g^b z_2), [g^{-a}w_1:g^b w_2])$.
\end{center}
The extended action of $S^1$ on $\widetilde{V}$ has two fixed points $p_1=((0,0),[1:0])$ and $p_2=((0,0),[0:1])$. 

Now, we compute the fixed point data at $p_1$ and $p_2$. Consider $p_1$. Let $u=\frac{w_2}{w_1}$. Since $z_2=z_1\frac{w_2}{w_1}=z_1 u$, $z_1$ and $u$ become local coordinates near $p_1=((0,0),[1:0])$. Now, the extended action of $S^1$ near $p_1$ is given by
\begin{center}
$\displaystyle g\cdot(z_1,u)=g\cdot(z_1,\frac{w_2}{w_1})=(g^{-a}z_1,\frac{g^bw_2}{g^{-a}w_1})=(g^{-a}z_1,g^{a+b}u)$.
\end{center}
Hence, as complex $S^1$-representations, the weights at $p_1$ are $\{-a,a+b\}$. As real $S^1$-representations, the local fixed point data at $p_1$ is therefore $\{-,a,a+b\}$. Next, consider $p_2$. Let $v=\frac{w_1}{w_2}$. In this case $z_2$ and $v$ become local coordinates near $p_2$. The extended action of $S^1$ near $p_2$ is given by
\begin{center}
$\displaystyle g\cdot(z_2,v)=g\cdot(z_2,\frac{w_1}{w_2})=(g^bz_2,\frac{g^{-a}w_1}{g^bw_2})=(g^bz_2,g^{-a-b}v)$.
\end{center}
The weights at $p_2$ as complex $S^1$-representations are $\{b,-a-b\}$ and hence as real $S^1$-representations, the local fixed point data at $p_2$ is $\{-,b,a+b\}$. 

Note that $\phi$ is a diffeomorphism from $U \setminus \{p\}$ to $\widetilde{V} \setminus E$, where $E$ is the exceptional divisor. Consider the manifold $\widetilde{M}=((M\setminus\{p\})\sqcup \widetilde{V})/\phi$. The extended action of $S^1$ on the manifold $\widetilde{M}$ has the fixed point set $(M^{S^1}\setminus \{p\}) \cup \{p_1,p_2\}$ and hence the fixed point data $(\Sigma_M \setminus \{-,a,b\}) \cup \{-,a,a+b\} \cup \{-,b,a+b\}$. This proves the first part.

Assume Case (2), i.e., $\epsilon(p)=1$. In this case, reverse the orientation of $M$. This reverses the sign of $\epsilon(q)$ for each fixed point $q$. In particular, $\epsilon(p)=-1$. Now, proceed as in the first case; we blow up the fixed point $p$ to replace the fixed point $p$ with $\mathbb{CP}^1$, on which we have an extended $S^1$-action that has two fixed points $p_1$ and $p_2$. The fixed points $p_1$ and $p_2$ have fixed point data $\{-,a,a+b\}$ and $\{-,b,a+b\}$, respectively. We reverse the orientation of $M$ back to its original orientation and this completes the proof. \end{proof}

\section{Proof of the main result: Theorem \ref{t11} and Theorem \ref{t12}} \label{s6}

We are ready to prove Theorem \ref{t11} and Theorem \ref{t12}.

\begin{proof} [Proof of Theorem \ref{t11} and Theorem \ref{t12}]
Associate to $M$ a signed, labelled, 2-regular multigraph $\Gamma$ without any loop as in Proposition \ref{p42}. We begin with the biggest label (weight) of an edge among all the edges of the multigraph $\Gamma$. Let $e$ be an edge whose weight $w$ is the biggest among all the weights of the edges of the multigraph $\Gamma$. Assume that $w>1$. By (4) of Proposition \ref{p42}, the vertices (fixed points) $p_1$ and $p_2$ of the edge $e$ lie in the same connected component $Z$ of $M^{\mathbb{Z}_w}$. Let $a$ and $b$ be the remaining weights at $p_1$ and $p_2$, respectively. By Lemma \ref{l34}, $Z=S^2$, and either
\begin{enumerate}
\item $\epsilon(p_1)=\epsilon(p_2)$ and $a+b=w$, or
\item $\epsilon(p_1)=-\epsilon(p_2)$ and $a=b<w$.
\end{enumerate}

First, suppose that Case (1) holds; $\epsilon(p_1)=\epsilon(p_2)$ and $a+b=w$, i.e., $\Sigma_{p_1}=\{\epsilon(p_1),a,a+b\}$ and $\Sigma_{p_2}=\{\epsilon(p_1),b,a+b\}$. In this case, if we can construct a 4-dimensional compact connected oriented manifold $M'$, equipped with a circle action whose fixed point data is $(\Sigma_M \setminus (\{\epsilon(p_1),a,a+b\} \cup \{\epsilon(p_2),b,a+b\})) \cup \{\epsilon(p_1),a,b\}$, then by Lemma \ref{l51}, we can blow up the fixed point whose fixed point data is $\{\epsilon(p_1),a,b\}$ to construct a 4-dimensional compact connected oriented manifold $M''$ equipped with a circle action whose fixed point data $\Sigma_{M''}$ is the same as $\Sigma_M$. Therefore, the classification of the fixed point data of $M$ reduces to the existence of a 4-dimensional compact connected oriented manifold $M'$ equipped with a circle action whose fixed point data is $(\Sigma_M \setminus (\{\epsilon(p_1),a,a+b\} \cup \{\epsilon(p_2),b,a+b\})) \cup \{\epsilon(p_1),a,b\}$. Figure \ref{fig1} describes that by the blow up operation as in Lemma \ref{l51} at the fixed point $p$ in $M'$, from $M'$ whose multigraph is $\Gamma'$, we obtain a manifold $M''$ whose multigraph is $\Gamma$, which is the same as $\Gamma$ associated to $M$.

\begin{figure}
\centering
\begin{subfigure}[b][6.5cm][s]{.4\textwidth}
\centering
\vfill
\begin{tikzpicture}[state/.style ={circle, draw}]
\node[state] (a) {};
\node[state] (b) [above right=of a] {$p_1$};
\node[state] (c) [above=of b] {$p_2$};
\node[state] (d) [above left=of c] {};
\path (a) edge node[right] {$a$} (b);
\path (b) edge node [left] {$w=a+b$} (c);
\path (c) edge node [left] {$b$} (d);
\end{tikzpicture}
\vfill
\caption{$\Gamma$}\label{fig1-1}
\end{subfigure}
\begin{subfigure}[b][6.5cm][s]{.4\textwidth}
\centering
\vfill
\begin{tikzpicture}[state/.style ={circle, draw}]
\node[state] (a) {};
\node[state] (b) [above right=of a] {$p$};
\node[state] (c) [above left=of b] {};
\path (a) edge node[right] {$a$} (b);
\path (b) edge node [right] {$b$} (c);
\end{tikzpicture}
\vfill
\caption{$\Gamma'$}\label{fig1-2}
\vspace{\baselineskip}
\end{subfigure}\qquad
\caption{Case (1)}\label{fig1}
\end{figure}

When $\epsilon(p_1)=\epsilon(p_2)=+1$ (i.e., when $\epsilon(p)=+1$), this corresponds to Step (2) in Theorem \ref{t11}. When $\epsilon(p_1)=\epsilon(p_2)=-1$ (i.e., when $\epsilon(p)=-1$), this corresponds to Step (3) in Theorem \ref{t11}.

Second, suppose that Case (2) holds; $\epsilon(p_1)=-\epsilon(p_2)$ and $a=b$, i.e., $\Sigma_{p_1}=\{\epsilon(p_1),a,w\}$ and $\Sigma_{p_2}=\{-\epsilon(p_1),a,w\}$. Then there are two possibilities:
\begin{enumerate}[(a)]
\item There exists one more edge $e'$ between $p_1$ and $p_2$ with label $a$ (Figure \ref{fig2}(A)).
\item There exist other fixed points $p_3$ and $p_4$ such that there is an edge $e_1$ with label $a$ between $p_1$ and $p_3$, and there is an edge $e_2$ with label $a$ between $p_2$ and $p_4$ (Figure \ref{fig2}(B)).
\end{enumerate}

Assume that Case (a) holds. In this case, if we can construct a 4-dimensional compact connected oriented $M'$ equipped with a circle action whose fixed point data is $\Sigma_M \setminus (\{\epsilon(p_1),a,w\} \cup \{\epsilon(p_2),a,w\})$, then by Lemma \ref{l23}, we can perform equivariant sum of $M'$ and a circle action on $S^4$ to construct a 4-dimensional compact connected oriented manifold $M''$ equipped with a circle action whose fixed point data is the same as $\Sigma_M$. Here, $S^1$ acts on $S^4$ by $g \cdot (z_1,z_2,x)=(g^a z_1,g^w z_2,x)$, where $g \in S^1 \subset \mathbb{C}$, $z_i \in \mathbb{C}$ and $x \in \mathbb{R}$. The fixed point data of the circle action on $S^4$ is $\{+,a,w\} \cup \{-,a,w\}$. Therefore, the classification of the fixed point data of $M$ reduces to the existence of a 4-dimensional compact connected oriented manifold $M'$ equipped with a circle action whose fixed point data is $\Sigma_M \setminus (\{\epsilon(p_1),a,w\} \cup \{\epsilon(p_2),a,w\})$. This corresponds to Step (1) in Theorem \ref{t11}.

Assume that Case (b) holds. Let $a_3$ and $a_4$ be the remaining weights at $p_3$ and $p_4$, respectively, i.e., $\Sigma_{p_3}=\{\epsilon(p_3),a,a_3\}$ and $\Sigma_{p_4}=\{\epsilon(p_4),a,a_4\}$. Since $p_1$ and $p_3$ are connected by the edge $e_1$ whose label is $a$, by (4) of Proposition \ref{p42}, $p_1$ and $p_3$ lie in the same connected component of $M^{\mathbb{Z}_a}$. Suppose that $a>1$. If we take $a$ here for the role of $w$ in Lemma \ref{l33} ($w$ here for $a$ in Lemma \ref{l33} and $a_3$ here for $b$ in Lemma \ref{l33}), then we have that $w \equiv -a_3 \mod a$ if $\epsilon(p_1) =\epsilon(p_3)$, and $w \equiv a_3 \mod a$ if $\epsilon(p_1) \neq \epsilon(p_3)$. Equivalently, $-\epsilon(p_1)w \equiv \epsilon(p_3) a_3 \mod a$. This relation is trivial when $a=1$. Hence it holds for any $a$. Similarly, repeating this argument for the edge $e_2$ between $p_2$ and $p_4$ whose label is also $a$, we have $-\epsilon(p_2)w \equiv \epsilon(p_4) a_4 \mod a$.

In Case (b), we redraw edges $e_1$ between $p_1$ and $p_3$ and $e_2$ between $p_2$ and $p_4$ in the following way. Instead of the edge $e_1$ with label $a$ between $p_1$ and $p_3$, there is an edge $e_1'$ with label $a$ between $p_1$ and $p_2$. Instead of the edge $e_2$ with label $a$ between $p_2$ and $p_4$, there is an edge $e_2'$ with label $a$ between $p_3$ and $p_4$. Except redrawing of the two edges, other edges of $\Gamma$ remain the same. See Figure \ref{fig2}(B) for the multigraph $\Gamma$ and Figure \ref{fig2}(C) for the new multigraph $\Gamma'$.

Since our purpose is to classify the fixed point data, changing edges, i.e., isotropy submanifolds does not yield any problem. On the other hand, the new multigraph $\Gamma'$ still must satisfy Lemma \ref{l33} to be realized as a multigraph associated to a 4-dimensional compact connected oriented manifold equipped with a circle action having a discrete fixed point set, as the original multigraph $\Gamma$ satisfies. Now,  in the new multigraph $\Gamma'$, $p_1$ and $p_2$ are connected by the two edges $e$ with label $w$ and $e_1'$ with label $a$. Since $\epsilon(p_1)=-\epsilon(p_2)$, if we apply Lemma \ref{l33} for the edge $e$ whose label is $w$, then we have $a \equiv a \mod w$. If we apply Lemma \ref{l33} for the edge $e_1'$ whose label is $a$, we have $w \equiv w \mod a$. Therefore, Lemma \ref{l33} holds for the edges $e$ and $e_1'$.

Previously, since there were the edge $e_1$ with label $a$ between $p_1$ and $p_3$ and the edge $e_2$ with label $a$ between $p_2$ and $p_4$, by Lemma \ref{l33}, we had $-\epsilon(p_1)w \equiv \epsilon(p_3) a_3 \mod a$ and $-\epsilon(p_2)w \equiv \epsilon(p_4) a_4 \mod a$, respectively. Therefore, we had $-\epsilon(p_3) a_3 \equiv \epsilon(p_4) a_4 \mod a$, since $\epsilon(p_1)=-\epsilon(p_2)$. Now, by Lemma \ref{l33} for the edge $e_2'$ between $p_3$ and $p_4$ whose label is $a$ in the new multigraph $\Gamma'$,we must have $-\epsilon(p_3) a_3 \equiv \epsilon(p_4) a_4 \mod a$ and this confirms that Lemma \ref{l33} holds for the new multigraph $\Gamma'$.

With the redrawing of the edges, now we fall into the situation of Case (a). Therefore, proceed as in Case (a). Therefore, Case (b) also corresponds to Step (1) in Theorem \ref{t11}.

In the three Cases (1), (2)(a), and (2)(b) above, the classification problem of the fixed point data of $M$ reduces to the existence of a 4-dimensional compact connected oriented manifold $M'$ equipped with a circle action that has fewer fixed points. Repeat the process above whenever the biggest weight $w$ of an edge among all the edges is bigger than 1. The problem then reduces to the existence of a 4-dimensional compact connected oriented manifold $M'''$ equipped with a circle action, whose weights in the fixed point data are all 1, i.e., a semi-free circle action. By Theorem \ref{t29}, the number $k$ of fixed points $p$ in $M'''$ with $\epsilon(p)=+1$ and that with $\epsilon(p)=-1$ are equal. Such a manifold $M'''$ can be constructed as an eqivariant sum (along free orbits, in the sense of Lemma \ref{l23}) of $k$-copies of $S^4$'s, each of which is equipped with a circle action $g \cdot (z_1,z_2,x)=(g z_1,g z_2,x)$. The fixed point data of each $S^4$ is $\{+,1,1\} \cup \{-,1,1\}$. This also corresponds to Step (1) in Theorem \ref{t11}. \end{proof}

\begin{figure}
\begin{subfigure}[b][3.5cm][s]{.17\textwidth}
\centering
\vfill
\begin{tikzpicture}[state/.style ={circle, draw}]
\node[state] (a) {$p_1$};
\node[state] (b) [above=of a] {$p_2$};
\path (a) [bend right =20]edge node[right] {$w$} (b);
\path (b) [bend right =20]edge node[left] {$a$} (a);
\end{tikzpicture}
\vfill
\caption{Case(2)(a)}\label{fig2-1}
\vspace{\baselineskip}
\end{subfigure}\qquad
\begin{subfigure}[b][3.5cm][s]{.3\textwidth}
\centering
\vfill
\begin{tikzpicture}[state/.style ={circle, draw}]
\node[state] (a) {$p_1$};
\node[state] (b) [right=of a] {$p_3$};
\node[state] (c) [above=of a] {$p_2$};
\node[state] (d) [right=of c] {$p_4$};
\node[state] (e) [right=of b] {};
\node[state] (f) [right=of d] {};
\path (a) edge node[above] {$a$} (b);
\path (c) edge node[left] {$w$} (a);
\path (d) edge node[above] {$a$} (c);
\path (b) edge node[above] {$a_3$} (e);
\path (d) edge node[above] {$a_4$} (f);
\end{tikzpicture}
\vfill
\caption{Case(2)(b)$\Gamma$}\label{fig2-2}
\vspace{\baselineskip}
\end{subfigure}\qquad
\begin{subfigure}[b][3.5cm][s]{.3\textwidth}
\centering
\vfill
\begin{tikzpicture}[state/.style ={circle, draw}]
\node[state] (a) {$p_1$};
\node[state] (c) [right=of a] {$p_3$};
\node[state] (b) [above=of a] {$p_2$};
\node[state] (d) [above=of c] {$p_4$};
\node[state] (e) [right=of c] {};
\node[state] (f) [right=of d] {};
\path (a) [bend right =20]edge node[right] {$w$} (b);
\path (b) [bend right =20]edge node[left] {$a$} (a);
\path (c) edge node [right] {$a$} (d);
\path (c) edge node[above] {$a_3$} (e);
\path (d) edge node[above] {$a_4$} (f);
\end{tikzpicture}
\vfill
\caption{Case(2)(b)$\Gamma'$}\label{fig2-3}
\vspace{\baselineskip}
\end{subfigure}\qquad
\caption{Case (2)}\label{fig2}
\end{figure}

\section{$S^1$-actions on 4-manifolds with few fixed points and proof of Corollary \ref{c13}} \label{s7}

In this section, from Theorem \ref{t11}, we discuss the classification of $S^1$-actions on 4-dimensional compact oriented manifolds with few fixed points. From Theorem \ref{t11}, it is straightforward to classify the fixed point data when there are few fixed points. The case of two fixed points is given in Theorem \ref{t28} for any dimension.

\begin{theorem} \label{t71} Let the circle act on a 4-dimensional compact connected oriented manifold $M$ with 3 fixed points. Then the fixed point data of $M$ is the same as a blow-up at a fixed point of a rotation on $S^4$ with two fixed points, i.e., the fixed point data of $M$ is $\{\mp,a,b\}$, $\{\pm,a,a+b\}$, $\{\pm,b,a+b\}$ for some positive integers $a$ and $b$ with $\mathrm{sign}(M)=\pm 1$. \end{theorem}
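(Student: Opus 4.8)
The plan is to read the result straight off the combinatorial description in Theorem \ref{t11}, treating the three fixed points as three triples in $\Sigma_M$ and tracking how the generating operations change their number. First I would reduce to an effective action: quotienting by the subgroup $\mathbb{Z}_k \subset S^1$ acting trivially divides every weight by the common factor $k$, and this sends $(a,b)$ to $(a/k,b/k)$, so it manifestly preserves the shape $\{\mp,a,b\}, \{\pm,a,a+b\}, \{\pm,b,a+b\}$ asserted in the statement. It therefore suffices to prove the claim for effective actions, after which Theorem \ref{t11} applies verbatim and tells us that $\Sigma_M$ is obtained from the empty set by a sequence of Steps (1), (2), (3). The signature is unchanged by this reduction and is computed throughout by \eqref{eq:1}, namely $\mathrm{sign}(M)=\sum_{p \in M^{S^1}} \epsilon(p)$.

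Next I would do the bookkeeping on the number of fixed points, i.e.\ the number of triples in $\Sigma_M$. Step (1) raises this count by $2$, while Steps (2) and (3) each replace one triple by two and hence raise the count by exactly $1$. Since we begin with the empty set, the first operation must be Step (1), because Steps (2) and (3) have nothing to act on; this already recovers $k \ge 2$. To land at exactly $k=3$, the only possibility is that Step (1) is applied exactly once and then exactly one of Step (2) or Step (3) is applied exactly once: a second application of Step (1) would force $k \ge 4$, and applying two of Steps (2)/(3) after a single Step (1) would also give $k=4$.

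Finally I would split into the two surviving cases and read off the data and the sign. After the single Step (1) the data is $\{+,a,b\} \cup \{-,a,b\}$ with $a,b$ relatively prime. Step (2) acts only on a vertex of sign $+$, producing $\{-,a,b\} \cup \{+,a,a+b\} \cup \{+,b,a+b\}$, whose signature by \eqref{eq:1} is $-1+1+1=1$. Symmetrically, Step (3) acts only on a vertex of sign $-$, producing $\{+,a,b\} \cup \{-,a,a+b\} \cup \{-,b,a+b\}$ with signature $+1-1-1=-1$. These are precisely the two sign choices in the statement, and each is visibly the fixed point data obtained by blowing up one fixed point (via Lemma \ref{l51}) of the rotation of $S^4$ with data $\{+,a,b\} \cup \{-,a,b\}$ described in Section \ref{s5}.

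This is essentially a counting exercise, so I do not anticipate a genuine obstacle; the only points demanding care are the observation that Step (1) must be the first operation (so that $\Sigma_M$ is non-empty and $k \ge 2$) and the sign constraints that restrict Steps (2) and (3) to the unique admissible vertex, leaving exactly the two listed cases. One could bypass the appeal to effectiveness by carrying the common factor $k$ through the whole argument and noting it merely rescales $(a,b)$, but reducing to the effective case at the outset keeps the combinatorics cleanest.
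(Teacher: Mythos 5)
Your proposal is correct and follows essentially the same route as the paper: apply the combinatorics of Theorem \ref{t11}, observe that Step (1) must occur exactly once and exactly one of Steps (2)/(3) exactly once to reach three fixed points, then read off the two possible fixed point data and compute the signature from equation \eqref{eq:1}. Your explicit reduction to the effective case at the outset is a small extra care the paper's proof leaves implicit, but it does not change the argument.
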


\begin{proof} The fixed point data of $M$ is achieved by the combinatorics in Theorem \ref{t11}. Since there are 3 fixed points, Step (1) must occur precisely once. Then we have $\{+,a,b\} \cup \{-,a,b\}$ for some positive integers $a$ and $b$. If Step (1) occurs more than once then there are at least 4 fixed points. Next, to have 3 fixed points, exactly one of Step (2) and Step (3) must occur exactly once. If Step (2) occurs then it replaces $\{+,a,b\}$ by $\{+,a,a+b\} \cup \{+,b,a+b\}$ and hence the fixed point data of $M$ is $\{+,a,a+b\} \cup \{+,b,a+b\} \cup \{-,a,b\}$. Similarly, if Step (3) occurs then the fixed point data of $M$ is $\{+,a,b\} \cup \{-,a,a+b\} \cup \{-,b,a+b\}$. The conclusion on the signature of $M$ follows immediately from equation \ref{eq:1}. \end{proof}

\begin{theorem} \label{t72}
Let the circle act on a 4-dimensional compact connected oriented manifold $M$ with 4 fixed points. Then precisely one of the following holds for the fixed point data $\Sigma_M$ of $M$.
\begin{enumerate}
\item $\Sigma_M=\{+,a,b\} \cup \{-,a,b\} \cup \{+,c,d\} \cup \{-,c,d\}$ for some positive integers $a,b,c$, and $d$.
\item $\Sigma_M=\pm (\{-,a,b\} \cup \{+,a,a+b\} \cup \{+,b,a+2b\} \cup \{+,a+b,a+2b\})$ for some positive integers $a$ and $b$. 
\end{enumerate}
Moreover, in Case (1) $\textrm{sign}(M)=0$ and in Case (2) $\textrm{sign}(M)=\pm 2$. \end{theorem}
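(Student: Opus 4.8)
The plan is to read off every possible fixed point data with four fixed points directly from the combinatorial recipe of Theorem \ref{t11}, and then to separate the two cases using the signature formula \eqref{eq:1}. The key observation is that the number of fixed points is completely controlled by how many times each of the three steps is applied: Step (1) creates two new pieces of fixed point data, while each of Step (2) and Step (3) replaces one piece by two and hence increases the count by exactly one.

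First I would set up the bookkeeping. Let $s$, $b_1$, and $b_2$ be the number of times Step (1), Step (2), and Step (3) are applied, respectively. Starting from the empty set, we need $s \geq 1$, and the number of fixed points is $2s + b_1 + b_2 = 4$. The only solutions are $s=2,\ b_1=b_2=0$ and $s=1,\ b_1+b_2=2$. This reduces the problem to a short finite enumeration.

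Next I would carry out the steps explicitly in each case. If $s=2$ (two Step (1)'s), then $\Sigma_M=\{+,a,b\}\cup\{-,a,b\}\cup\{+,c,d\}\cup\{-,c,d\}$, which is Case (1). If $s=1$, begin with $\{+,a,b\}\cup\{-,a,b\}$ and apply two further steps. When $(b_1,b_2)=(1,1)$, Step (2) turns the $+$ pair into $\{+,a,a+b\}\cup\{+,b,a+b\}$ and Step (3) turns the $-$ pair into $\{-,a,a+b\}\cup\{-,b,a+b\}$, which is again of the form in Case (1). When $(b_1,b_2)=(2,0)$, applying Step (2) twice to the $+$ data produces, after relabeling $a\leftrightarrow b$, exactly $\{-,a,b\}\cup\{+,a,a+b\}\cup\{+,b,a+2b\}\cup\{+,a+b,a+2b\}$, which is Case (2) with the $+$ sign; the case $(b_1,b_2)=(0,2)$ is identical after reversing orientation and gives the $-$ sign. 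Finally I would compute $\textrm{sign}(M)$ in each case from \eqref{eq:1}, i.e. by summing $\epsilon(p)$ over the four vertices: this yields $0$ in Case (1) and $\pm 2$ in Case (2). Since these values are distinct, the two cases are mutually exclusive, giving the ``precisely one'' assertion.

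The main obstacle is not any single computation but ensuring the enumeration is genuinely exhaustive and that the superficially different outputs collapse to the stated normal forms. In the $(b_1,b_2)=(2,0)$ case the second Step (2) may be applied to either of the two available $+$ vertices, producing two apparently different data sets; the point to verify carefully is that these coincide after the symmetry $a\leftrightarrow b$, so that both are captured by Case (2). Likewise one must confirm that the coprimality required by Step (1) is consistent with the effectiveness of the final action (the weights $a,b$ appear among the fixed point data, so $\gcd(a,b)=1$ is forced), justifying that the parameters in the statement may be taken as arbitrary positive integers without losing effectiveness.
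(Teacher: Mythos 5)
Your proposal is correct and follows essentially the same route as the paper: enumerate how the steps of Theorem \ref{t11} can produce exactly four fixed points and then read off the signatures from equation \eqref{eq:1}. Your explicit bookkeeping $2s+b_1+b_2=4$ and the verification that the two choices of $+$ vertex in the $(b_1,b_2)=(2,0)$ case coincide under $a\leftrightarrow b$ are just cleaner formulations of what the paper asserts directly.
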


\begin{proof} The proof of the theorem is similar to that of Theorem \ref{t71}.  The fixed point data of $M$ is achieved by the combinatorics in Theorem \ref{t11} and Step (1) in Theorem \ref{t11} must occur at least once. If Step (1) occurs twice, then the fixed point data of $M$ is $\Sigma_M=\{+,a,b\} \cup \{-,a,b\} \cup \{+,c,d\} \cup \{-,c,d\}$ for some positive integers $a,b,c$, and $d$ and this is Case (1) of the theorem.

Suppose that Step (1) occurs exactly once. Assume that we apply Step (2) in Theorem \ref{t11}. Then we have  $\{+,a,a+b\} \cup \{+,b,a+b\} \cup \{-,a,b\}$ for some positive integers $a$ and $b$. If we apply Step (2) for $\{+,a,a+b\}$ or $\{+,b,a+b\}$, then this is Case (2) of the theorem. If we apply Step (3) for $\{-,a,b\}$, then the fixed point data of $M$ is $\{+,a,a+b\} \cup \{+,b,a+b\} \cup \{-,a,a+b\} \cup \{-,b,a+b\}$ and this is Case (1) of the theorem. The case where we apply Step (3) is similar. In each case, the conclusion on the signature of $M$ follows immediately from equation \ref{eq:1}. \end{proof}

It has not been known before if a manifold with fixed point data as in Case (2) in Theorem \ref{t72} exists or not, as the fixed point data for the case of 4 fixed points has not been classified. Our Theorem \ref{t12} proves the existence of a manifold of Case (2) in Theorem \ref{t72}. Note that the fixed point data of Case (2) in Theorem \ref{t72} cannot be realized as the fixed point data of a circle action on a complex manifold or a symplectic manifold, since with 4 fixed points in either case the signature of a manifold must vanish \cite{Jan3}.

We end this section with a proof of Corollary \ref{c13}.

\begin{proof} [Proof of Corollary \ref{c13}]
Equation \ref{eq:1} states that 
\begin{center}
$\displaystyle \textrm{sign}(M)=\sum_{p \in M^{S^1}} \epsilon(p)$.
\end{center}
Since $\epsilon(p)$ is either $+1$ or $-1$, it follows that $\textrm{sign}(M) \equiv k \mod 2$. Since there are finitely many fixed points, Step (1) in Theorem \ref{t11} must occur at least once. Moreover, Step (1), Step (2), and Step (3) in Theorem \ref{t11} only increase the number of fixed points with sign $+1$ and/or $-1$ and do not decrease. Therefore, not all fixed points can have $\epsilon(p)=+1$ ($\epsilon(p)=-1$). Hence we have $\textrm{sign}(M) \leq k-2$ ($\textrm{sign}(M) \geq 2-k$). This proves the first part.

For the second part, we first prove that for any integer $k \geq 2$, there exists a 4-dimensional compact connected oriented $S^1$-manifold $M$ with $k$ fixed points such that $\textrm{sign}(M)=k-2$ ($\textrm{sign}(M)=2-k$); $k-1$ fixed points have sign $+1$ ($-1$) and one fixed point has sign $-1$ ($+1$, respectively). We prove by induction on $k$. When $k=2$, a rotation on $S^4$ given by $g \cdot (z_1,z_2,x)=(g z_1, g z_2,x)$ provides the base case; it has 2 fixed points, its fixed point data is $\{+,1,1\}\cup\{-,1,1\}$, and hence $\textrm{sign}(S^4)=0$.

Assume that the claim holds for $k=i+2$. We prove for $i+3$. Take any 4-dimensional compact connected oriented $S^1$-manifold $M'$ with $i+2$ fixed points such that $\textrm{sign}(M')=i$ ($\textrm{sign}(M')=-i$). Take a fixed point $p$ whose sign is $+1$ ($-1$); such a fixed point $p$ exists by Theorem \ref{t11}. Let the fixed point data at $p$ be $\Sigma_p=\{+,a,b\}$ ($\Sigma_p=\{-,a,b\}$) for some positive integers $a$ and $b$. By Lemma \ref{l51}, from $M'$, by blowing up the fixed point $p$ we can construct a 4-dimensional compact connected oriented manifold $\widetilde{M'}$ equipped with a circle action such that the fixed point data of $\widetilde{M'}$ is $(\Sigma_{M'} \setminus \{+,a,b\}) \cup \{+,a,a+b\} \cup \{+,b,a+b\}$ ($(\Sigma_{M'} \setminus \{-,a,b\}) \cup \{-,a,a+b\} \cup \{-,b,a+b\}$). We create one more fixed point with sign $+1$ ($-1$). Therefore, the manifold $\widetilde{M'}$ has $i+3$ fixed points; $i+2$ fixed points have sign $+1$ ($-1$) and one fixed point has sign $-1$ ($+1$). Since $\textrm{sign}(\widetilde{M'})=\sum_{p \in \widetilde{M'}^{S^1}} \epsilon(p)$, the signature of $\widetilde{M'}$ is $\textrm{sign}(\widetilde{M'})=i+1$ ($\textrm{sign}(\widetilde{M'})=-i-1$). This proves the claim.

Suppose that we are given a pair $(j,k)$ of integers such that $k \geq 2$, $2-k \leq j \leq k-2$, and $j \equiv k \mod 2$. By the claim above, there exists a 4-dimensional compact connected oriented $S^1$-manifold $M'$ with $|j|+2$ fixed points such that $\textrm{sign}(M')=j$ ($-j$). Take a connected sum of $M'$ and $\frac{k-|j|-2}{2}$ ($k-|j|-2$ is even) copies of $S^4$'s, on each of which the action is given by $g \cdot (z_1,z_2,x)=(g z_1, g z_2,x)$ and hence the fixed point data $\{+,1,1\}$ and $\{-,1,1\}$. Here, the connected sum is along free orbits of the manifolds in the sense of Lemma \ref{l23}. The resulting manifold $M$ is a 4-dimensional compact connected oriented manifold and is equipped with a circle action that has $k$ fixed points. Since $M'$ has $|j|+1$ fixed points with sign $+1$ ($-1)$ and one fixed point with sign $-1$ ($+1$) and each rotation on $S^4$ has one fixed point with sign $+1$ and one fixed point with sign $-1$, the manifold $M$ has $|j|+1+\frac{k-|j|-2}{2}$ fixed points with sign $+1$ ($-1$) and $1+\frac{k-|j|-2}{2}$ fixed points with sign $-1$ ($+1$). Again, since $\textrm{sign}(M)=\sum_{p \in M^{S^1}} \epsilon(p)$, the signature of $M$ is $\textrm{sign}(M)=j$ ($-j$, respectively). \end{proof}

\begin{rem} For the second part of Corollary \ref{c13}, the construction of a manifold is not unique. For instance, suppose that we construct an example with 6 fixed points and with signature 2. Then one may take two copies of a manifold with signature 1 in Theorem \ref{t71} and take a connected sum in the sense of Lemma \ref{l23}, or take blowing up twice at fixed points with sign $+1$ of a rotation on $S^4$ and take a connected sum with another rotation on $S^4$. The fixed point data of the former manifold is $\{-,a,b\} \cup \{+,a,a+b\} \cup \{+,b,a+b\} \cup \{-,c,d\} \cup \{+,c,c+d\} \cup \{+,d,c+d\}$ for some positive integers $a,b,c$, and $d$, whereas for the latter it is $\{-,e,f\} \cup \{+,e,e+f\} \cup \{+,f,e+2f\} \cup \{+,e+f,e+2f\} \cup \{+,g,h\} \cup \{-,g,h\}$ for some positive integers $e,f,g$, and $h$. \end{rem}

\section{Graphs as manifolds} \label{s8}

In section \ref{s4}, we associated a signed, labelled multigraph without a loop, to a compact oriented manifold equipped with a circle action, having a discrete fixed point set. The multigraph is $n$-regular, where $2n$ is the dimension of the manifold. Moreover, it satisfies equal modulo property for weights of edges in the sense of Lemma \ref{l31}. Now, we may ask a question: under what conditions, does a multigraph behave like a manifold? That is, when can a multigraph be realized as a multigraph associated to a manifold equipped with a circle action? We answer this question when a multigraph is 2-regular.

\begin{Definition} \label{d81}
Let $\Gamma$ be a signed, labelled, 2-regular multigraph that does not have any loop.
\begin{enumerate}
\item The multigraph $\Gamma$ is called \textbf{effective}, if for every vertex $v$, its edges $e_1$ and $e_2$ have relatively prime labels.
\item The multigraph $\Gamma$ is said to satisfy \textbf{equal modulo property}, if for any edge $e$ between two vertices $v_1$ and $v_2$, $e_i$ is the remaining edge at $v_i$, then $-\epsilon(v_1) w(e_1) \equiv \epsilon(v_2)w(e_2) \mod w(e)$.
\item The multigraph $\Gamma$ is said to satisfy \textbf{minimal property}, if an edge $e$ has the smallest weight (label), then its vertices $v_1$ and $v_2$ satisfy $\epsilon(v_1)=-\epsilon(v_2)$.
\end{enumerate} \end{Definition}

With Definition \ref{d81}, we provide a necessary and sufficient condition for a 2-regular multigraph to be realized as a multigraph associated to a circle action on an oriented 4-manifold.

\begin{theorem} \label{t82} Let $\Gamma$ be an effective, signed, labelled, 2-regular multigraph that does not have any loops. Suppose that $\Gamma$ satisfies the equal modulo property and the minimal property. Then there exists a 4-dimensional compact connected oriented manifold $M$ equipped with an effective circle action having a discrete fixed point set such that the corresponding multigraph is $\Gamma$. \end{theorem}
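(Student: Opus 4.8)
The plan is to run the reduction from the proof of Theorem \ref{t11} and Theorem \ref{t12} in reverse, but now tracking the edges (i.e.\ the isotropy spheres) and not merely the fixed point data. I would induct on the number of vertices $k$ of $\Gamma$, showing at each stage that $\Gamma$ is obtained from a strictly smaller multigraph, still satisfying all the hypotheses of Definition \ref{d81}, by one of the geometric operations of Sections \ref{s2} and \ref{s5} (equivariant sum and blow-up), so that a manifold realizing the smaller graph can be enlarged to one whose associated multigraph (in the sense of Proposition \ref{p42}) is exactly $\Gamma$.

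For the base of the induction, suppose the biggest label of $\Gamma$ is $1$. Then every vertex carries weights $\{1,1\}$ (effectiveness is automatic, as $\gcd(1,1)=1$), and the minimal property forces every edge to join vertices of opposite sign; being $2$-regular, $\Gamma$ is then a disjoint union of even cycles with alternating signs, and in particular has equally many $+$ and $-$ vertices, say $k/2$ of each. I would take an equivariant sum of $k/2$ copies of the semifree $S^4$ as in Theorem \ref{t210}, giving a connected oriented semifree $S^1$-manifold with the correct fixed point data. Since $M^{\mathbb{Z}_1}=M$ is connected, condition (4) of Proposition \ref{p42} is vacuous for the label $1$, so the weight-$1$ edges may be drawn in any $2$-regular, sign-alternating pattern, in particular so as to reproduce $\Gamma$.

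For the inductive step, let $w>1$ be the biggest label and $e=p_1p_2$ an edge of label $w$, with remaining weights $a$ at $p_1$ and $b$ at $p_2$; effectiveness forces $a,b<w$, and the equal modulo property produces exactly the dichotomy of Lemma \ref{l34}. If $\epsilon(p_1)=\epsilon(p_2)$, then $a+b=w$, and I would blow $e$ down: merge $p_1,p_2$ into a single vertex $p$ of the same sign with weights $\{a,b\}$ and reattach the $a$- and $b$-edges. Using $w=a+b$ one checks that the resulting $\Gamma'$ is again effective, loop-free, and satisfies the equal modulo and minimal properties; realize it by induction and recover $\Gamma$ by Lemma \ref{l51}, whose exceptional $\mathbb{CP}^1$ is precisely the new label-$w$ isotropy sphere. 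If instead $\epsilon(p_1)=-\epsilon(p_2)$, then $a=b$, and when $e$ belongs to a double edge $p_1p_2$ (labels $a$ and $w$) I would delete $p_1,p_2$, realize the remaining graph by induction, and glue on the rotation $S^4$ of Lemma \ref{l34}, whose multigraph is exactly that double edge, so that the equivariant sum of Lemma \ref{l23} reproduces $\Gamma$.

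The remaining and hardest case is $\epsilon(p_1)=-\epsilon(p_2)$, $a=b>1$, with the two $a$-edges at $p_1,p_2$ running to distinct vertices $p_3,p_4$. Here I would contract the path $p_3\!-\!p_1\!-\!p_2\!-\!p_4$ to a single label-$a$ edge $p_3p_4$, obtaining a graph $\widehat\Gamma$ with two fewer vertices; the equal modulo relations at the two original $a$-edges, together with $\epsilon(p_1)=-\epsilon(p_2)$, give precisely the equal modulo relation for the new edge, and when $a$ is minimal the minimal property propagates as well, so that $\widehat\Gamma$ again satisfies all hypotheses and is realized by some $\widehat M$ by induction. One must then reinsert $p_1,p_2$ in the interior of the label-$a$ isotropy sphere joining $p_3,p_4$ in $\widehat M$, together with a label-$w$ sphere between them. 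I expect this to be the main obstacle, because the required operation is an equivariant connected sum of $\widehat M$ with the rotation $S^4$ of weights $a,w$ performed \emph{not} along a free orbit, as in Lemma \ref{l23}, but along an orbit with isotropy $\mathbb{Z}_a$ lying on the two label-$a$ spheres; the equal modulo property is exactly what makes the two $\mathbb{Z}_a$-normal representations agree, so that this equivariant splicing is possible, reroutes the $a$-isotropy into $p_3\!-\!p_1$ and $p_2\!-\!p_4$, and leaves all weights unchanged. (When $a=1$ the splicing is unnecessary, since label-$1$ edges are free as in the base case, so one simply redraws them to match $\Gamma$.) Developing this sum-along-isotropy operation and checking that it outputs a connected oriented manifold whose multigraph is $\Gamma$ is where the real work lies; once it is in place, the induction closes.
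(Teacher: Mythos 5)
Your induction is structurally the same as the paper's proof: reduce at an edge of largest label $w$, handle the same-sign case by blow-down/blow-up via Lemma \ref{l51}, handle the opposite-sign double-edge case by deleting it and summing with a rotation of $S^4$ via Lemma \ref{l23}, and terminate at the all-labels-$1$ case realized by equivariant sums of semifree $S^4$'s. Your base case, same-sign case, and double-edge case are correct and agree with the paper. The genuine gap is the one you flag yourself: in the case $\epsilon(p_1)=-\epsilon(p_2)$, $a=b>1$, with the two label-$a$ edges running to distinct vertices $p_3,p_4$, your argument requires an equivariant sum along an orbit with isotropy $\mathbb{Z}_a$, and you never construct this operation. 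The paper's toolbox contains only sums along \emph{free} orbits (Lemma \ref{l23}) and blow-ups (Lemma \ref{l51}), and neither can change which fixed points lie on a common component of $M^{\mathbb{Z}_a}$ when $a>1$; so, as submitted, your induction does not close, and the proposal is incomplete at exactly the step you call ``the real work.''

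That said, you have put your finger on the weakest point of the paper's own proof, which at this step simply invokes Case (2)(b) of Section \ref{s6}: there the two label-$a$ edges are redrawn ($p_1p_3,\,p_2p_4 \leadsto p_1p_2,\,p_3p_4$) and the resulting double edge is removed. Redrawing is harmless for Theorems \ref{t11} and \ref{t12}, which concern only fixed point data, but for Theorem \ref{t82} it realizes the \emph{redrawn} multigraph rather than $\Gamma$: when $a>1$, property (4) of Proposition \ref{p42} makes the label-$a$ edges record which fixed points share a connected component of $M^{\mathbb{Z}_a}$, a geometric invariant rather than a choice. A concrete test case is the $4$-cycle with vertices $p_1^+,p_2^-,p_4^-,p_3^+$ and edges $p_1p_2$ of label $3$, $p_2p_4$ of label $2$, $p_4p_3$ of label $1$, $p_3p_1$ of label $2$; it satisfies every hypothesis of the theorem, yet the paper's reduction outputs the equivariant sum of $S^4$'s with weights $(2,3)$ and $(1,2)$ along a free orbit, whose $\mathbb{Z}_2$-fixed set joins $p_1$ to $p_2$ and $p_3$ to $p_4$, so its multigraph is not $\Gamma$. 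Your splicing does repair this: the slices along the two $\mathbb{Z}_2$-orbits are both $\mathbb{R}\oplus\mathbb{C}$ with $\mathbb{Z}_2$ acting by $-1$ on $\mathbb{C}$ (agreement of the normal representations modulo $a$ is exactly what the equal modulo property supplies in general), and one can choose an orientation-reversing equivariant gluing of the boundaries $S^1\times_{\mathbb{Z}_2}S^2$ matching either pair of isotropy circles, producing an oriented manifold whose multigraph is the $4$-cycle. Developing this sum-along-isotropy-orbits lemma in general, with the orientation and sign bookkeeping, is the missing content --- in your proposal and, arguably, in the paper's proof as well.
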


\begin{proof} The proof is analogous to the proof of Theorem \ref{t11} and Theorem \ref{t12} in Section \ref{s6}. Let $e$ be an edge whose label $w(e)$ is the largest among all the labels of edges. Let $v_1$ and $v_2$ be its vertices. Suppose that $w(e)>1$. If $\epsilon(v_1)=\epsilon(v_2)$, then by the equal modulo property of $\Gamma$, the remaining edges $e_i$ at $v_i$ satisfy $w(e_1)+w(e_2)=w(e)$. As in Case (1) of the proof of Theorem \ref{t11}, the existence of $M$ whose multigraph is $\Gamma$ reduces to the existence of a manifold $M'$ whose multigraph $\Gamma'$ (Figure \ref{fig1-2}) is the same as $\Gamma$ (Figure \ref{fig1-1}) with the edge $e$ shrunk to a vertex (Figure \ref{fig1}).

Similarly, in the case that $\epsilon(v_1) \neq \epsilon(v_2)$, we proceed as in Case (2) of the proof of Theorem \ref{t11} in Section \ref{s6}; the existence of $M$ with multigraph $\Gamma$ reduces to the existence of a manifold $M'$ with $\Gamma'$ that has fewer vertices than $\Gamma$.

At each step, the minimal property is conserved. Finally, after a finite number of the steps as above, the label of every edge is 1. Such a multigraph is realized as a multigraph associated to an equivariant sum by means of Lemma \ref{l23}, of copies of $S^4$'s on each of which the action is given by $g \cdot (z_1,z_2,x)=(gz_1,gz_2,x)$. \end{proof}

As an application of Theorem \ref{t82}, we give a necessary and sufficient condition for a fixed point data to be realized as the fixed point data of a circle action on an oriented 4-manifold that does exist.

\begin{theorem} \label{t83} Let $\Sigma=\cup_{p} \{\epsilon(p),w_p^1,w_p^2\}$ be a finite collection, where $\epsilon(p)=\pm 1$ and $w_p^i$ are positive integers. Then there exists a 4-dimensional compact connected oriented manifold $M$ equipped with an effective circle action whose fixed point data is $\Sigma$, if and only if there exists a multigraph $\Gamma=(V,E)$ such that
$\cup_{v \in V} \{\epsilon(v),w(e_1(v)),w(e_2(v))\}=\Sigma$, and $\Gamma$ satisfies the conditions in Theorem \ref{t82}. \end{theorem}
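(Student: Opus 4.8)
The plan is to treat the two directions of the equivalence separately, since they are very unequal in difficulty. The backward direction is immediate from the previous theorem: if a multigraph $\Gamma$ with $\cup_{v\in V}\{\epsilon(v),w(e_1(v)),w(e_2(v))\}=\Sigma$ satisfying the hypotheses of Theorem \ref{t82} is given, then Theorem \ref{t82} produces a $4$-dimensional compact connected oriented manifold $M$ carrying an effective circle action with a discrete fixed point set whose associated multigraph is $\Gamma$, and reading off its fixed point data vertex by vertex returns exactly $\Sigma$. So essentially all the content lies in the forward direction.

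For the forward direction, assume $M$ with fixed point data $\Sigma$ exists. The naive idea of simply taking the multigraph attached to $M$ by Proposition \ref{p42} will turn out to be problematic, so instead I would build $\Gamma$ by mirroring the combinatorics of Theorem \ref{t11}. By Theorem \ref{t11}, $\Sigma$ is obtained from the empty collection by Steps (1)--(3), and I would translate each step into a graph operation: Step (1) adds two opposite-sign vertices joined by two edges labelled $a$ and $b$; Steps (2) and (3) perform a graph blow-up, splitting the chosen vertex $\{\pm,c,d\}$ (whose two incident edges carry its two weights, to vertices $X$ and $Y$ say) into two same-sign vertices $\{\pm,c,c+d\}$ and $\{\pm,d,c+d\}$, retaining the old $c$-edge to $X$ on the first and the old $d$-edge to $Y$ on the second, and inserting a new edge labelled $c+d$ between them. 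The resulting $\Gamma$ is signed, labelled, $2$-regular, loopless, and has vertex data $\Sigma$ by construction.

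I would then verify the three conditions of Definition \ref{d81} by carrying them as invariants of this construction. Effectiveness is clear: Step (1) uses coprime $a,b$, and $\gcd(c,d)=1$ gives $\gcd(c,c+d)=\gcd(d,c+d)=1$. For the equal modulo property, the new exceptional edge satisfies $-c\equiv d \bmod (c+d)$ trivially, while a retained edge labelled $c$ between $\{\pm,c,c+d\}$ and $X$ now reads $-(c+d)\equiv \epsilon(X)x \bmod c$, which is the same congruence modulo $c$ that it satisfied before the split; hence the property passes from each stage to the next. This is precisely the congruence bookkeeping already carried out in the proof of Theorem \ref{t11}.

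The main obstacle, and the reason for routing through Theorem \ref{t11} rather than through the Proposition \ref{p42} graph of $M$, is the minimal property. For an arbitrary $M$ a smallest-weight $S^2$ joins its two fixed points with opposite sign only when the two remaining weights $a,b$ across it are congruent modulo $w$, which Lemma \ref{l33} permits but does not force; so the tautological graph of $M$ can genuinely fail the condition. In the constructed $\Gamma$ this cannot happen: every exceptional edge carries a label $c+d$ strictly larger than the weights $c,d$ it preserves, so the globally smallest label never sits on an exceptional edge, while Steps (2) and (3) leave each retained edge between vertices of the same signs as before. Hence minimal-label edges arise only from Step (1) and always join opposite signs, so the minimal property is maintained throughout. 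Establishing this inductive preservation is the crux; the two directions then combine to give the equivalence.
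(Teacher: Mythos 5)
Your proof is correct. For context: the paper states Theorem \ref{t83} without any proof, presenting it as an immediate application of Theorem \ref{t82}; the backward direction is indeed just a citation of Theorem \ref{t82}, exactly as you argue, and all the substance is in the forward direction, which the paper leaves implicit. Your forward argument --- rebuilding a multigraph by shadowing the combinatorial steps of Theorem \ref{t11}, rather than taking the multigraph attached to $M$ by Proposition \ref{p42} --- is sound, and the three invariants are verified correctly: coprimality is preserved since $\gcd(c,d)=1$ implies $\gcd(c,c+d)=\gcd(d,c+d)=1$; the equal modulo property passes through each split because $c+d\equiv d \bmod c$ (you omit the trivial base case for the Step (1) double edge, namely $-b\equiv -b \bmod a$, and drop a sign $\epsilon(v_1)$ in your displayed congruence, but these are cosmetic); and the minimal property holds because labels are never deleted, every exceptional label $c+d$ strictly exceeds a label present in the final graph and hence exceeds the global minimum, so minimal-label edges are always Step (1) edges, whose endpoint signs are unchanged by later vertex splittings. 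Moreover, your cautionary claim that the tautological graph of $M$ can genuinely violate the minimal property is true, not merely a theoretical worry: on the Hirzebruch surface $F_4=\mathbb{P}(\mathcal{O}\oplus\mathcal{O}(4))$, the circle subgroup of the torus generated by $(2,-3)$ acts effectively with fixed point data $\{+,2,3\},\{-,2,3\},\{+,2,5\},\{-,2,5\}$; the smallest weight is $2$, and the two components of $M^{\mathbb{Z}_2}$ are the two sections (self-intersections $\pm 4$), one joining $\{-,2,3\}$ to $\{-,2,5\}$ and the other joining $\{+,2,3\}$ to $\{+,2,5\}$, both same-sign pairs, so Proposition \ref{p42} applied to this manifold produces a graph failing Definition \ref{d81}(3). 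Thus the forward direction genuinely requires either your reconstruction through Theorem \ref{t11} or the edge-redrawing device of Case (2)(b) in Section \ref{s6}, which is the paper's implicit mechanism for converting the graph of $M$ into one satisfying the hypotheses of Theorem \ref{t82}; the two are essentially the same combinatorics, but your write-up has the merit of making explicit that a new graph must be constructed and of isolating the preservation of the minimal property as the crux.
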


\end{document}